\numberwithin{equation}{section}
\newtheorem{theorem}{Theorem}[section]
\newtheorem{lemma}[theorem]{Lemma}
\newtheorem{proposition}[theorem]{Proposition}
\theoremstyle{definition}
\theoremstyle{remark}
\newtheorem{remark}[theorem]{Remark}
\newcommand{\ep}{\varepsilon}
\newcommand{\be}{{\mathrm{b}}}
\newcommand{\crit}{{\mathrm{cr}}}
\newcommand{\tr}{{\mathrm{tr}}}
\title{Large-amplitude steady solitary water waves with constant vorticity}
\author{Susanna V.~Haziot}
\address{Department of Mathematics, Brown University, Box 1917, Providence, RI 02912, USA}
\email{susanna\_haziot@brown.edu}
\author{Miles H.~Wheeler}
\address{Department of Mathematical Sciences, University of Bath, Bath, BA2 7AY, UK}
\email{mw2319@bath.ac.uk}
\date{\today}
\begin{document}
\begin{abstract}
	This paper considers two-dimensional steady solitary waves with constant vorticity propagating under the influence of gravity over an impermeable flat bed. Unlike in previous works on solitary waves, we allow for both internal stagnation points and overhanging wave profiles. Using analytic global bifurcation theory, we construct continuous curves of large-amplitude solutions. Along these curves, either the wave amplitude approaches the maximum possible value, the dimensionless wave speed becomes unbounded, or a singularity develops in a conformal map describing the fluid domain. We also show that an arbitrary solitary wave of elevation with constant vorticity must be supercritical. The existence proof relies on a novel reformulation of the problem as an elliptic system for two scalar functions in a fixed domain, one describing the conformal map of the fluid region and the other the flow beneath the wave.    
	
\end{abstract}	
\maketitle
\tableofcontents
\section{Introduction}
We consider steady two-dimensional waves propagating along the surface of a fluid of finite depth. A great majority of the work done on such waves has been done in the irrotational setting, that is, when the curl of the velocity field vanishes. The mathematical advantage of this setting is that the problem can be reformulated as a problem for a single harmonic function in a fixed domain. For rotational flows, Constantin and Strauss \cite{cs:exact} provided the first existence result for large-amplitude periodic traveling waves. They used a semi-conformal change of variables devised by Dubreil-Jacotin in 1934 \cite{dubreil}, which leads to a quasilinear elliptic problem with a fully nonlinear boundary condition, again for a single function. For a survey on traveling water waves, see \cite{onepas}.

In this paper we study waves which are solitary, that is waves whose profile is a localized -- though not necessarily small -- disturbance of some asymptotic height at infinity. Solitary waves are considerably more difficult to study than their periodic counterparts, since the fluid surface is unbounded which leads to compactness issues. Small-amplitude rotational waves were first constructed by Ter-Krikorov \cite{ter:orig, ter:rot}. The Dubreil-Jacotin change of variables has subsequently been used to construct both small-amplitude \cite{hur:exact, gw} and large-amplitude \cite{paper} waves.

One of the most challenging problems in the theory of steady two-dimensional water waves is proving the existence of waves with \emph{overhanging} profiles. Mathematically, these have a free surface which is not the graph of a function. Numerical results~\cite{ss:deep, sp:steep, vandenb:constant, vandenb:newfamily, dh:numerical, dh:foldsgaps} suggest that such waves occur in the case of non-zero constant vorticity. These waves have both \emph{critical layers} where the horizontal fluid velocity in the moving frame vanishes, and internal \emph{stagnation points} where both components vanish.
Small-amplitude periodic waves which are not overhanging but have critical layers and stagnation points were first constructed in \cite{wahlen:crit}. In 2016, the break-through paper by Constantin, Strauss and Varvaruca \cite{csv:critical} was the first successful attempt at constructing large-amplitude periodic waves with vorticity which can overturn. It is conjectured, based on the numerics, that some of the waves they construct do indeed overturn, but currently there is no rigorous proof. Very recently, overturning periodic waves with constant vorticity have been constructed with weak gravity and large or infinite depth  by perturbing a family of explicit solutions with zero gravity \cite{hw:touching}. Unlike in the periodic case, small-amplitude solitary waves cannot have stagnation points, but see~\cite{kkl:stag}.

The main contribution of this paper is the first construction of large-amplitude solitary waves with constant vorticity which may have overhanging profiles. We prove the existence of a global curve of solutions along which one of several scenarios must occur: the wave amplitude tends to the maximum value so that stagnation is approached at the crest, the dimensionless wave speed blows up, or a singularity forms on the free surface. Our existence results follow from an application of an analytic global bifurcation theorem due to Dancer \cite{dancer} and Buffoni and Toland \cite{bt:analytic}, adapted in \cite{strat} to the unbounded solitary wave setting. As in \cite{csv:critical} we use a conformal mapping, but instead of reformulating the problem as an elegant pseudo-differential equation on the boundary using the Hilbert transform, we work in an elliptic system setting. A significant advantage of this approach, which appears to be new, is that it provides access to linear and nonlinear estimates \cite{adn2,GT,lieberman}. This enables us to strengthen the result in \cite{csv:critical} by, roughly speaking, replacing blow-up in $C^{2+\beta}$ with blow-up in $C^1$. Moreover, this formulation is also less reliant on the vorticity being constant, and may apply to more general vorticity distributions.

\subsection{Presentation of the problem}\label{subsect: Presentation} 
Let $\Omega$ denote the unbounded fluid region in the $(X,Y)$-plane, bounded below by the flat bottom $Y=0$ and above by the water's free surface $\mathcal{S}$. We assume that as $|X|\to\infty$, the fluid domain $\Omega$ approaches a horizontal strip of width $d$. We call $d$ the \emph{asymptotic depth} of the fluid. 

The fact that $\mathcal{S}$ is an unknown is one of the main difficulties of the water wave problem, and it is convenient to introduce a change of variables which fixes the domain. In this paper we assume that $\Omega$ is the image under a conformal mapping $X+iY=\xi(x,y)+i\eta(x,y)$ of the rectangular strip \begin{equation}\label{strip R}
\mathcal{R}:=\{(x,y)\in\mathbb{R}^2:0<y<d\}.
\end{equation} 
We denote the upper and lower boundaries of $\mathcal{R}$ by
\begin{equation*}
	\Gamma:=\{(x,y)\in\mathbb{R}^2:y=d\}\quad\text{and}\quad B:=\{(x,y)\in\mathbb{R}^2:y=0\},
\end{equation*}
respectively. We require that $B$ be mapped onto $Y=0$ and $\Gamma$ onto the free surface $\mathcal{S}$; see Figure~\ref{fig:mapping}. Further details on this map are provided in Section~\ref{sect: Formulation}.  

\begin{figure}
	\centering
	\includegraphics[scale=1.1]{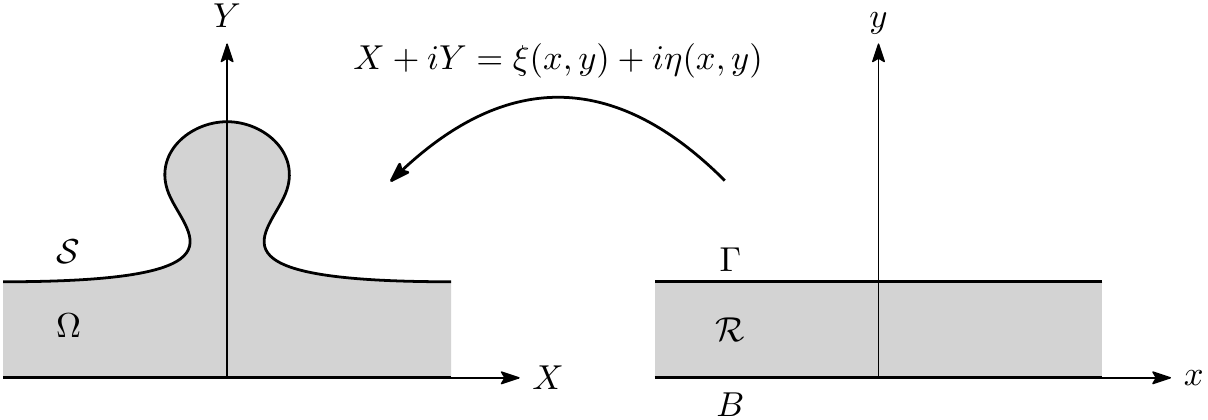}
	\caption{The conformal parametrization of the fluid domain $\Omega$.
		\label{fig:mapping}
	}
\end{figure}

Working in a frame moving with the wave, the fluid velocity $(U,V)$ and pressure $P$ satisfy the two-dimensional steady incompressible Euler equations 
\begin{subequations}\label{euler}
  \begin{alignat}{2}
    \label{incompressibility}
    U_X+V_Y&=0&\qquad&\text{in }\Omega,\\
    \label{horizontal momentum}
    UU_X+VU_Y&=-P_X&\qquad&\text{in }\Omega,\\
    \label{vertical momentum}
    UV_X+VV_Y&=-P_Y-g&\qquad&\text{in }\Omega,
    \intertext{coupled with kinematic boundary conditions}
    \label{kinematic top}
    U\eta_x-V\xi_x&=0&\qquad&\text{on }\mathcal{S},\\
    \label{kinematic bottom}
    V&=0&\qquad&\text{on }Y=0,
    \intertext{on both components of the boundary and the dynamic boundary}
    \label{pressure}
    P&=P_{\text{atm}}&\qquad&\text{on }\mathcal{S}
  \end{alignat}
  on the surface. Here $P_{\text{atm}}$ denotes the constant atmospheric pressure and $g>0$ the constant acceleration due to gravity. We further assume that the vorticity
  \begin{equation}\label{vorticity}
    \omega=\operatorname{curl}(U,V)=U_Y-V_X
  \end{equation}
  is constant throughout the fluid. We note that different authors use different sign conventions in \eqref{vorticity}; here we follow the convention in~\cite{csv:critical}.

\emph{Solitary waves} are solutions to \eqref{horizontal momentum}--\eqref{vorticity} satisfying the asymptotic conditions  
\begin{equation}\label{asymptotic surface}
  \eta(x,d)\to d, 
  \quad 
  \xi(x,d)\to\pm\infty
  \qquad
  \text{as }x\to\pm\infty
\end{equation} 
  on the free surface, and 
  \begin{equation*}\label{asymptotic streamfunction}
    V(X,Y)\to0,
    \quad 
    U(X,Y)\to F\sqrt{gd}\bigg(\gamma\frac{(Y-d)}{d}+1\bigg)
    \qquad\text{as}\,\,X\to\pm\infty
\end{equation*}
on the velocity field, uniformly in $Y$. Here $F$ is the \emph{Froude number}, a dimensionless wave speed, and $\gamma$ is a dimensionless measure of the vorticity $\omega$. We will view $g,d$ and $\gamma$ as fixed and $F$ as the parameter.  
\end{subequations}

We call a solitary wave a \emph{wave of elevation} if $\eta(x,d)>0$ for all $x$. The wave is said to be \emph{symmetric} if $\eta$ is even in the $x$ variable and \emph{monotone} if $\eta_x(x,d)<0$ for $x>0$.
\subsection{Statement of the main results}\label{sec:statement}
The main theorem of the paper is the following existence result. It is stated somewhat informally; we refer the reader to Theorem~\ref{thm: main result full} in Section~\ref{sect: Existence Results} for a precise version. 
\begin{theorem}\label{thm: main result}
  Fix the gravitational constant $g>0$, asymptotic depth $d>0$ and $\gamma<0$. Then there exists a global continuous curve $\mathscr{C}$ of solutions to \eqref{euler} parameterized by $s \in (0,\infty)$. Moreover, the following property holds along $\mathscr{C}$ as $s\to\infty$:
	\begin{equation}\label{main result}
		\min\bigg\{
    \inf_\Gamma\bigg(1- \frac{2}{F^2(s)}\frac{\eta(s)-d}{d}\bigg)
    ,\,
    \inf_\Gamma\big(\eta_x^2(s)+\eta_y^2(s)\big)
    ,\,
    \frac{1}{F(s)} \bigg\}\longrightarrow0.
	\end{equation}
	These solutions are all symmetric and monotone waves of elevation in the sense that $\eta$ is even in $x$ with $\eta_x(x,d)<0$ for $x>0$. 
\end{theorem}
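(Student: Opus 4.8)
The plan is to transform \eqref{euler} into a semilinear elliptic system on the fixed strip $\mathcal{R}$ and then run analytic global bifurcation. I would introduce a relative stream function $\psi$ with $(U,V)=(\psi_Y,-\psi_X)$, so that \eqref{incompressibility} holds automatically, the kinematic conditions \eqref{kinematic top}--\eqref{kinematic bottom} say that $\psi$ is constant on each component of $\partial\Omega$, and \eqref{vorticity} becomes $\Delta\psi=\omega$ in $\Omega$. Because the Euler equations with constant vorticity admit a global Bernoulli law $\tfrac12|\nabla\psi|^2+P+gY-\omega\psi=\mathrm{const}$, the dynamic condition \eqref{pressure} reduces to $|\nabla\psi|^2+2gY=Q$ on $\mathcal{S}$ for a constant $Q$. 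Pulling back by the conformal map $\Phi=\xi+i\eta$ of $\mathcal{R}$ onto $\Omega$ and using the Cauchy--Riemann relations together with $|\Phi'|^2=\xi_x^2+\xi_y^2=\eta_x^2+\eta_y^2$, one finds that $\eta$ is harmonic in $\mathcal{R}$ with $\eta=0$ on $B$ (so the whole fluid geometry is encoded by $\eta$ alone), while $w:=\psi\circ\Phi$ satisfies $\Delta w=\omega(\eta_x^2+\eta_y^2)$ in $\mathcal{R}$ with a prescribed ($F$-dependent) constant on $B$ and $w=0$ on $\Gamma$, and the Bernoulli condition becomes $|\nabla w|^2+(2g\eta-Q)(\eta_x^2+\eta_y^2)=0$ on $\Gamma$. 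After nondimensionalizing and expressing $Q$ via the asymptotic shear flow, the coefficient $2g\eta-Q$ is, up to the positive factor $gdF^2$, exactly $-(1-\tfrac{2}{F^2}\tfrac{\eta-d}{d})$, so the first two entries of \eqref{main result} are precisely the quantities whose positivity is needed for the Bernoulli condition and for the nonvanishing of $\Phi'$. The conditions \eqref{asymptotic surface} translate into exponential decay of the deviations $\eta-y$ and $w-w_{\mathrm{triv}}$ as $x\to\pm\infty$, and the flat strip with shear flow $U=F\sqrt{gd}(\gamma(Y-d)/d+1)$ is a solution for every $F>0$.

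Next I would set up the bifurcation-theoretic framework following \cite{strat,paper}: work with the deviations in weighted Hölder spaces $C^{2+\alpha}$ on $\mathcal{R}$ that are even in $x$ and decay exponentially as $x\to\pm\infty$, restrict to the quarter-strip $\{x>0\}$ with the natural even/odd conditions along $\{x=0\}$ so that symmetry is built in, and let $\mathcal{O}$ be the open set where $F>0$, $\eta_x^2+\eta_y^2>0$, and $1-\tfrac{2}{F^2}\tfrac{\eta-d}{d}>0$, so that the nonlinearities --- which require division by $\eta_x^2+\eta_y^2$ --- are real-analytic there. The operator $\mathscr{F}\colon\mathbb{R}\times\mathcal{O}\to\mathcal{Y}$ vanishing exactly on solutions is then real-analytic; using Schauder theory for the interior equations and the Lopatinskii--Shapiro (complementing) condition for the oblique Bernoulli boundary operator \cite{adn2,GT,lieberman}, one checks that $\mathscr{F}(F,\cdot)$ is Fredholm of index zero and that $\mathscr{F}$ is proper on closed bounded subsets of $\mathbb{R}\times\mathcal{O}$. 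This properness --- genuine compactness on the \emph{unbounded} domain $\mathcal{R}$ --- is the technical heart of the argument, and is exactly where the elliptic-system formulation, with its uniform interior and boundary estimates combined with the enforced exponential decay, is essential.

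Then comes the local theory: linearizing at the trivial family and separating variables in $x$ produces a dispersion relation whose solitary-wave bifurcation point is the critical Froude number $F_{\crit}=F_{\crit}(\gamma)>0$ at which the branch of oscillatory spatial modes develops a double root at zero wavenumber. Since the essential spectrum of the linearization reaches zero there, standard Crandall--Rabinowitz bifurcation does not apply; instead one uses a centre-manifold (spatial-dynamics) reduction which --- under the hypothesis $\gamma<0$, ensuring the relevant coefficient has the right sign --- yields a KdV-type reduced equation with a homoclinic solution, hence a local analytic curve $\mathscr{C}_{\mathrm{loc}}$ of small-amplitude symmetric monotone waves of elevation bifurcating from $(F_{\crit},0)$, as in \cite{strat,paper}. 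Feeding $\mathscr{C}_{\mathrm{loc}}$ and $\mathscr{F}$ into the analytic global bifurcation theorem of \cite{dancer,bt:analytic} in the solitary-wave form of \cite{strat} yields a maximal continuous curve $\mathscr{C}=\{(F(s),\eta(s),w(s)):s\in(0,\infty)\}$, locally real-analytic and reparametrizable away from branch points, along which one of two alternatives holds: either $\mathscr{C}$ is a closed loop, or it leaves every closed bounded subset of $\mathbb{R}\times\mathcal{O}$ as $s\to\infty$.

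It remains to rule out the loop and to unpack the escape alternative. A Hopf-lemma and maximum-principle analysis on the quarter-strip --- using that $\eta_x$ and companion nodal quantities satisfy linear elliptic equations with the appropriate boundary conditions, and that $\gamma<0$ fixes the sign of $\Delta w$ --- propagates ``symmetric, monotone, $\eta_x(x,d)<0$ for $x>0$'' from $\mathscr{C}_{\mathrm{loc}}$ along all of $\mathscr{C}$; together with a uniform-in-$s$ positive lower bound on the wave amplitude this shows $\mathscr{C}$ cannot close up or return to the trivial line, so the escape alternative holds. To identify it with \eqref{main result}, note that on $\mathcal{O}$ the Bernoulli relation forces $\eta\le Q/(2g)$ on $\Gamma$ while monotonicity gives $\eta\ge d$ there, so $\eta$ is bounded on $\Gamma$ whenever $F$ is bounded; interior and boundary Schauder estimates (the latter using a uniform lower bound on $\eta_x^2+\eta_y^2$) together with the uniform exponential decay then bound $\|(\eta-y,\,w-w_{\mathrm{triv}})\|_{C^{2+\alpha}}$ in terms of $F$ and of how close $\inf_\Gamma(1-\tfrac{2}{F^2}\tfrac{\eta-d}{d})$ and $\inf_\Gamma(\eta_x^2+\eta_y^2)$ are to $0$, with these infima attained on a fixed compact set by the decay and with the amplitude bound excluding $F\to0$. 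Hence escaping every closed bounded subset of $\mathbb{R}\times\mathcal{O}$ forces $F(s)\to\infty$ (equivalently $1/F(s)\to0$), or $\inf_\Gamma(1-\tfrac{2}{F^2}\tfrac{\eta-d}{d})\to0$, or $\inf_\Gamma(\eta_x^2+\eta_y^2)\to0$, which is exactly \eqref{main result}. Beyond the properness/compactness issue noted above, the other delicate point is the degenerate local bifurcation together with the maximum-principle propagation of the nodal properties; the remaining steps are careful bookkeeping with the elliptic estimates.
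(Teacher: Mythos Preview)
Your outline captures the broad shape of the argument, but several key steps either fail as stated or are missing entirely.

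\textbf{Compactness and the critical Froude number.} Enforcing a \emph{fixed} exponential decay rate does not work: the spatial decay rate of solitary waves degenerates as $F\to F_\crit$ (equivalently $\alpha\to\alpha_\crit=1-\gamma$), so no single weighted space carries the whole curve, and the linearized operator ceases to be Fredholm there. The paper therefore works in unweighted $C^{3+\beta}_\be\cap C^2_0$ spaces and recovers compactness for \emph{monotone} solutions by a flow-force argument ruling out bores. More seriously, your open set $\mathcal O$ does not exclude $F=F_\crit$, and you never address why the curve cannot return to this value. In the paper this is a substantial result (Theorem~\ref{thm: dim froude}): any wave of elevation with constant vorticity is strictly supercritical, proved via a new integral identity coming from a harmonic ``flow-force flux function'' $\Phi$. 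Without this, the global alternative would also allow $F(s)\to F_\crit$, and your stated conclusion \eqref{main result} would not follow.

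\textbf{The role of $\gamma<0$, and uniform regularity.} The KdV coefficient in the reduced equation is $\gamma^2-3\gamma+3$, which is positive for \emph{all} real $\gamma$; the local bifurcation does not need $\gamma<0$. The sign hypothesis enters only at the very end: for $\gamma<0$ a maximum-principle argument on the stream function yields a uniform \emph{upper} bound $\psi_y<1-\tfrac12\gamma$ on $\Gamma$, which via the Bernoulli relation controls $\sup_\Gamma|\nabla\eta|$ in terms of $\inf_\Gamma(1-2\alpha(\eta-1))$ and so removes it from the blow-up alternative. Finally, reducing ``blow-up of $\|w\|_{C^{3+\beta}}$'' to the three pointwise quantities in \eqref{main result} requires genuinely \emph{nonlinear} regularity estimates, not linear Schauder theory: the paper bootstraps by splitting the system into two coupled scalar problems and applying Lieberman's $C^{1+\sigma}$ and $C^{2+\sigma}$ estimates for fully nonlinear oblique boundary conditions. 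Your sketch invokes only linear Schauder estimates, which do not give control of $C^{3+\beta}$ in terms of $C^1$.
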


We would like to draw the attention of the reader to the fact that \eqref{main result} only involves pointwise bounds on the geometry of the free surface and the Froude number $F$. In contrast, the analogous result in \cite{csv:critical} for periodic waves involves $C^{2+\beta}$ norms of the conformal map and the stream function. 

Let us now briefly explain the three terms in \eqref{main result}. By symmetry and monotonicity, the infimum in the first term is achieved at the crest $x=0$, where 
\begin{align*}
  \label{eqn:crest}
  1 - \frac 2{F^2} \frac{\eta-d}d = \frac{U^2}{F^2 gd} \ge 0
\end{align*}
is a dimensionless measure of the magnitude of the (purely horizontal) fluid velocity. Provided the Froude number $F$ remains bounded, this first term therefore vanishes precisely when the fluid is approaching stagnation ($U=V=0$) at the crest. This is known to occur in the irrotational case~\cite{at:finite}, where the curve $\mathscr C$ indeed limits to a singular ``wave of greatest height'' with a stagnation point at its crest~\cite{aft}.
The second term in \eqref{main result} is a simple measure of the non-degeneracy of the conformal mapping $\xi+i\eta$. If it were to vanish, we would expect the surface to develop a singularity. The last term of course vanishes only if the Froude number $F$ becomes arbitrarily large. 

Numerical work \cite{vandenb:constant,vandenb:newfamily} suggests that any one of the three terms in \eqref{main result} can vanish. Indeed, when a given dimensionless vorticity parameter is above a certain threshold, these papers find waves of greatest height with negative vorticity. For vorticities below this threshold in the limit $F\to\infty$, Vanden-Broeck shows that the free surface can degenerate into a configuration involving a disk of fluid above a thin strip. Finally, he also generates a family of waves for which $F\to\infty$ but the profile remains regular. 
	
While Theorem~\ref{thm: main result} is stated only for $\gamma < 0$, for the remainder of the paper we make no assumptions on the sign of $\gamma$. However, by adapting the periodic wave argument in \cite{csv:downstream} to our setting, one can show that waves we construct with $\gamma \ge 0$ cannot overturn or have internal stagnation points, and have therefore already been constructed in~\cite{paper} using different methods. Nevertheless, the precise results are not obviously equivalent, and so for completeness we record the analogue of Theorem~\ref{thm: main result} in Proposition~\ref{prop: positive gamma result}.

Another significant result we obtain is the following lower bound on the Froude number. To our knowledge, this is the first bound of this type which applies to waves which are overturning.
\begin{theorem}\label{thm: dim froude}
	Solitary waves of elevation with constant vorticity are supercritical. That is, for all solutions to \eqref{euler} with $\eta\geq d$ and $\eta\not\equiv d$ on the surface $\Gamma$, we have 
	\begin{equation*}
		\frac{1}{F^2}<1-\gamma.
	\end{equation*}
\end{theorem}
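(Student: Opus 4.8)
\emph{Approach.} I would argue via the \emph{flow force}, the same invariant used in the periodic problem. Writing the horizontal momentum equation together with incompressibility as $\partial_X(U^2+P)+\partial_Y(UV)=0$ exhibits $(U^2+P-P_{\mathrm{atm}},\,UV)$ as a divergence-free field whose normal flux vanishes on $\mathcal S$ (by the kinematic condition and $P=P_{\mathrm{atm}}$ there) and on $Y=0$ (since $V=0$ there). Hence its flux across any curve joining the bed to the surface is independent of the curve; in the conformal coordinates the flux across $\{x=\mathrm{const}\}$ is a well-defined number $S$, constant in $x$, and this makes sense even when $\mathcal S$ overturns. Letting $x\to\pm\infty$ and invoking the asymptotic conditions, $S$ equals the flow force $R(d)=\int_0^d\big(U_\infty^2+g(d-Y)\big)\,dY$ of the limiting shear flow.

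Next I would compare with the one-parameter family of shear flows. For $\lambda>0$ let $R(\lambda)$ and $\mathscr E(\lambda)$ denote the flow force and the Bernoulli constant of the laminar flow of depth $\lambda$ carrying the same constant vorticity and mass flux as the wave, and set $f(\lambda):=\mathscr E(\lambda)-\mathscr E(d)$. The explicit linear profiles yield four elementary facts: $f(d)=0$; $f$ is strictly convex; $R'(\lambda)=\lambda f'(\lambda)$; and a short computation in nondimensional variables shows that $f'(d)<0$ is \emph{equivalent} to $1/F^2<1-\gamma$ (while for $\gamma\ge1$ one has $f'(d)>0$ unconditionally, so the theorem then asserts that no solitary wave of elevation exists). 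Thus it suffices to prove $f'(d)<0$. It is convenient to set $G(\lambda):=R(d)-R(\lambda)+\lambda f(\lambda)$, so that $G(d)=0$ and $G'=f$, i.e.\ $G(\lambda)=\int_d^\lambda f$.

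The core is an identity comparing $S$ with the laminar flow force at the \emph{local} depth. Eliminating $P$ via Bernoulli in the definition of $S$, integrating by parts in the vertical variable, and using that the wave and the comparison laminar flow carry the same flux through each section, all the cross terms cancel and one is left, at a section of local depth $h$, with
\[
  \int_d^{h} f(t)\,dt \;=\; \frac12\int (U-U_h)^2 \;-\; \frac12\int V^2,
\]
where $U_h$ is the laminar profile of depth $h$ and the integrals are over the section. Suppose, for contradiction, $f'(d)\ge0$. Then $f(d)=0$ and strict convexity give $f>0$ on $(d,\infty)$, so the left side equals $\int_d^h f>0$ at every section with $h>d$; the elevation hypothesis ($\eta\ge d$ on $\Gamma$, $\eta\not\equiv d$) makes this the case at every finite section. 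Hence $\int(U-U_h)^2>\int V^2$ everywhere along the wave — a rigidity that should be incompatible with the decay of the wave to the trivial stream at $x\to\pm\infty$.

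Making that incompatibility precise is the main obstacle. The displayed identity is too weak at the crest of a symmetric wave (there $V\equiv0$, so it only gives $\int_d^{\lambda_0}f=\frac12\int(U-U_{\lambda_0})^2\ge0$, consistent with either sign of $f'(d)$), so one must work in the tails: compare the rates at which $U-U_h$ and $V$ vanish as $x\to\pm\infty$, i.e.\ analyse the decaying mode of the linearisation about the uniform stream. Conceptually — and this is why elevation is essential — if $f'(d)\ge0$ then the uniform stream of depth $d$ is sub- or critical and carries, in the spatial dynamics, a non-decaying oscillatory mode; a wave with such a tail would have $\eta$ repeatedly dip below $d$, contradicting $\eta\ge d$. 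Carrying this out rigorously for \emph{large}-amplitude (possibly overhanging) solutions, rather than only near bifurcation, is where the work lies; the flow-force bookkeeping above is routine by comparison.
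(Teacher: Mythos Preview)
Your approach has a genuine gap, and you are candid about it: the displayed identity $\int_d^h f = \tfrac12\int(U-U_h)^2 - \tfrac12\int V^2$ does not by itself yield a contradiction when $f'(d)\ge 0$, and the proposed fix via tail asymptotics is, as you say, where the real work would lie. For large-amplitude, possibly overhanging waves there is no obvious way to rigorously extract the needed sign information from the linearisation about the uniform stream; the paper in fact remarks that the older Froude-number arguments (which do rely on such structure, via the Dubreil--Jacotin transformation) break down precisely in this setting.

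The paper's route is different and avoids this obstacle entirely. Instead of comparing section-by-section with laminar flows, it introduces a harmonic ``flow force flux'' function $\Phi$ on the strip with explicit Dirichlet data on $\Gamma$ and $B$ (Proposition~\ref{prop: phi}). Multiplying $\Delta\Phi=0$ by $y$ and integrating by parts over $\{|x|<M\}$ yields, after using the boundary data and a Gauss--Green step, the identity
\[
  (1-\alpha-\gamma)\int w_1
  \;=\;
  \alpha\int w_1 w_{1y}
  + \frac{\alpha+\gamma^2}{2}\int w_1^2
  + \frac{\gamma^2}{6}\int w_1^3
  + o(1),
\]
with all integrals over $\Gamma\cap\{|x|<M\}$. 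For a nontrivial wave of elevation with $\alpha>0$, every term on the right is positive (the only nontrivial check, $\int w_1 w_{1y}>0$, follows from $\int|\nabla w_1|^2>0$), and $\int w_1>0$ on the left. Hence $1-\alpha-\gamma>0$ directly, with no appeal to tail behaviour or linear spectral information. What this buys over your approach is that the sign is read off from a global integral identity rather than from a local comparison that must then be propagated.
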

\begin{remark}
	In particular, such waves satisfy $\gamma<1$. The proof of Theorem~\ref{thm: dim froude} is based on a useful integral identity derived in Proposition~\ref{prop: integral identity}. The bound is also sharp for small-amplitude waves; for instance see Section~\ref{subsect: small-amplitude}.
\end{remark}

\subsection{Historical considerations}\label{subsect: History}
In 1834, while riding on horseback by a canal, Russell famously saw a solitary wave for the first time \cite{russell}. At the time, this observation was met with little excitement by his peers such as Stokes and Airy as the linear theory they were working with did not allow for such waves \cite{darrigol:horse}. On the contrary, it was even suggested that Russell's wave was simply a periodic wave with a very long wave-length. It was not until the weakly nonlinear long wave asymptotic expansions of Boussinesq in 1877 and Korteweg--de Vries in 1895 that models emerged admitting solitary wave solutions; see \cite{craik}. 

A vast majority of the rigorous work on solitary waves has been done for the irrotational case, and in particular small-amplitude waves. Lavrentiev \cite{lavrentiev} constructed solitary waves as long-wave limits of weakly nonlinear periodic waves. Friedrichs and Hyers \cite{fh} used an inverse function theorem approach which involved rescaling the problem and working in exponentially weighted spaces. Beale \cite{beale} instead used the Nash--Moser inverse function theorem. Mielke \cite{mielke} applied a dynamical systems approach, known as spatial dynamics, which involves looking for a center manifold for an evolution equation in the horizontal spatial variable $X$. 

The first rigorous large-amplitude result for irrotational waves was provided by Amick and Toland \cite{at:periodic, at:finite}, in which they proved the existence of a connected set curve of solutions limiting to stagnation. The authors use a conformal mapping approach and construct a connected set of solitary waves by taking the limit of a sequence of approximate problems with better compactness properties. The limiting wave is singular, which has a stagnation point and \ang{120} angle at its crest \cite{aft}. All the above waves are symmetric monotone waves of elevation with graphical streamlines, precluding the existence of critical layers and overhanging profiles; for more on the qualitative properties of irrotational solitary waves see \cite{amick:bounds, cs:sym, klw:subcritical}. 

For rotational waves, the first small-amplitude result is due to Ter-Krikorov \cite{ter:rot}, who used the method of Friedrichs and Hyers \cite{fh}. This was followed by Hur \cite{hur:exact} who used the Nash--Moser implicit function theorem. Later, Groves and Wahl\'en \cite{gw} constructed small-amplitude solitary waves with an arbitrary vorticity distribution using a spatial dynamics approach. In \cite{hur:symmetry}, Hur then provided the first symmetry results for rotational solitary waves.

Large-amplitude rotational solitary waves were first rigorously constructed in \cite{paper} in which the continuum limits to one of three possible scenarios: vanishing of the horizontal velocity, blow-up of the Froude number, or the Froude number attaining its critical value. The last two alternatives were then eliminated in \cite{froude,strat}. As in \cite{cs:exact}, the waves were constructed using a non-conformal coordinate transformation due to Dubreil-Jacotin \cite{dubreil} which maps the unknown fluid domain into a known rectangular strip by flattening all the streamlines. The resulting problem in the strip consists of a quasi-linear elliptic problem with nonlinear boundary conditions. However, this approach is only applicable to unidirectional flows where all the streamlines are graphs. As a result, the waves constructed in this way cannot have critical layers, internal stagnation points or overhanging profiles.

The first small-amplitude result for periodic rotational waves with internal stagnation points is due to Wahl\'en \cite{wahlen:crit}: using a naive ``surface flattening'' change of variables, he constructed waves containing closed streamlines, referred to as ``Kelvin's cat's-eyes''. Using the same approach, Varholm \cite{varholm:global} was able to successfully construct large-amplitude rotational periodic waves with critical layers. Unlike in \cite{cs:exact}, the possibility that the global curve reconnects is not ruled out. This is related to the fact that the necessary monotonicity properties could not be shown due to a lack of maximum principles. In the interesting paper \cite{kkl:stag}, Kozlov, Kuznetsov and Lokharu construct solitary waves with a critical layer and stagnation point beneath the wave crest in a carefully chosen scaling limit where the vorticity becomes large. Although all the above results provide flows with critical layers and internal stagnation points, the nature of the flattening transformation prevents these waves from overturning.

In \cite{cv:constant}, Constantin and Varvaruca developed an approach which generalizes the conformal map used for irrotational waves to the constant vorticity setting, and constructed small-amplitude rotational periodic waves with critical layers. The main feature of this transformation is that it does not place any restrictions on the geometry of the fluid domain and allows for stagnation points in the flow. As a result, Constantin, Strauss, and Varvaruca  \cite{csv:critical} were then able extend the family of local curves of solutions in \cite{cv:constant} to produce the first large-amplitude periodic waves with constant vorticity which may have overhanging profiles. In order to achieve this, they used the periodic Hilbert transform to reformulate the problem as a quasi-linear pseudo-differential equation in the spirit of Babenko~\cite{babenko}. The main result of this paper consists of the existence of families of continuous curves of solutions which limit to one of four different alternatives: the formation of a wave of greatest height with a stagnation point at the crest, blow up of either the elevation of the wave profile in the Hölder space $C^{2+\beta}$ or of the parameters, and self-intersection of the free surface above the trough line.        

In this paper, as in \cite{csv:critical}, we fix the domain using a conformal map. However, we carry out the analysis in the elliptic system setting rather than using a nonlocal Babenko-like formulation. Working with a local formulation enables us to adapt existing techniques~\cite{volpert:book,paper,pressure,strat,cww:globalfronts} for dealing with issues related to compactness, which as mentioned above is one of the central difficulties in the solitary wave problem. Moreover, the elliptic setting gives us access to powerful linear and nonlinear estimates~\cite{adn2,GT,lieberman}, which enables us to obtain stronger conclusions than in \cite{csv:critical}. Finally, this approach can be easily generalized to allow for an arbitrary vorticity distribution, and so may have further applications, both for periodic waves and solitary waves.

\subsection{Outline of the paper} 

One of the main difficulties of the water wave problem lies in the fact that it is a free boundary problem with nonlinear boundary conditions. As mentioned previously, we resolve this by reformulating the problem in an unbounded strip by means of a conformal map $\xi+i\eta$. We obtain an elliptic system of two harmonic functions $\eta$ and $\zeta$ with fully nonlinear boundary conditions. Here, the function $\zeta$ is closely related to the stream function.  

While it fixes the domain, this transformation gives rise to further difficulties. Most notably, the dynamic boundary condition is no longer oblique in the classical sense of \cite{GT}. Indeed, in \cite{csv:critical} the authors remark that for the analogous nonlocal formulation of this boundary condition, compactness properties seem to be unavailable. Nevertheless, by treating the problem as an elliptic system we are able to obtain Schauder estimates by turning to \cite{adn2}. More precisely, we prove that under the physically meaningful assumption of having no stagnation points on the surface, the linearized boundary operator satisfies the necessary complementing condition. 

However, these linear Schauder estimates are not sufficient for our purposes, and we need nonlinear estimates as well. While we were unable to find general results for elliptic systems with fully nonlinear boundary conditions, we overcome this by applying classical regularity theorems \cite{lieberman} to two carefully chosen coupled scalar problems. More specifically, we show that the $C^{3+\beta}$ norm of the wave elevation is controlled by its $C^1$ norm, thus ensuring that the main theorem of this paper is a significant improvement over the one obtained in \cite{csv:critical}. 

Even with linear and nonlinear estimates, determining Fredholm indices of the linearized version of the operator equation corresponding to our problem is more subtle than in the scalar case, especially for the large-amplitude solutions. Indeed, the complementing condition for elliptic systems is more topologically complicated than obliqueness. As a result, we cannot immediately exclude the possibility of the linearized operator having a non-zero index. We resolve this issue by showing that, in order to apply the global bifurcation theorem for solitary waves as presented in \cite{strat}, the linearized operator only needs to be semi-Fredholm, provided it is invertible and hence Fredholm of index $0$ along the local curve of solutions.

Another significant contribution in this paper concerns sharp lower bounds on the Froude number. Previous results for rotational waves \cite{froude,klw:subcritical} rely heavily on the Dubreil-Jacotin transformation and thus on the absence of internal stagnation points. These break down in our setting. Instead, we give a different argument centered on an auxiliary function related to the \textit{flow force flux function} first introduced in \cite{klw:subcritical}. The bounds we obtain are for waves of elevation only; it would be interesting to see if this assumption can be removed. 

The plan of the paper is the following. In Section~\ref{sect: Formulation} we reformulate the problem using a conformal mapping and non-dimensionalize so that only the parameters $F$ and $\gamma$ remain. Section~\ref{sect: Flow Force} collects arguments involving an invariant known as the ``flow force''. We use it in Section~\ref{subsect: Nonexistence of Bores} to prove, in the spirit of \cite{strat}, that for monotone solutions, compactness properties can only fail in the presence of bores, whose existence we subsequently rule out. In Section~\ref{subsect: Froude number}, we provide lower bounds on the Froude number. In Section~\ref{sect: Nodal Analysis}, we prove that the solutions we construct satisfy a set of monotonicity properties necessary for the compactness results in Section~\ref{subsect: Nonexistence of Bores} to hold. We provide a functional analytic formulation of the problem in Section~\ref{sect: Functional Analytic} along with properties of the linearized operator, and in Section~\ref{sect: Uniform Regularity} we prove our uniform regularity result. In Section~\ref{sect: Existence Results} we first construct small amplitude solitary wave solutions to our problem using the approach in \cite{chen2019center} which allows us, with minimal tedium, to reduce the problem to an ODE. Finally, we construct large-amplitude solutions, thus proving the main theorem of the paper.

Appendix~\ref{app: schauder estimates} specializes the Schauder estimates in \cite{adn2} to a class of elliptic systems which is sufficient for our needs. Appendix~\ref{app: cited results} records our adapted version of the global bifurcation theorem with a brief outline of the proof.  

\subsection{Notation}\label{subsect: notation}
Before we proceed, we briefly define some function spaces we will use throughout the paper. Let $\Omega$ be a connected, open, possibly unbounded subset of $\mathbb{R}^n$. We say that $\varphi\in C_\mathrm{c}^{\infty}(\overline{\Omega})$ if $\varphi\in C^{\infty}$ and the support of $\varphi$ is a compact subset of $\overline{\Omega}$. For $\beta\in(0,1)$ and $k\in\mathbb{N}$ we denote by $C^{k+\beta}(\overline{\Omega})$ the space of functions whose partial derivatives up to order $k$ are Hölder continuous in $\overline{\Omega}$ with exponent $\beta$. We say that $u_n\to u$ in $C_\text{loc}^{k+\beta}(\overline{\Omega})$ if $\|\varphi(u_n-u)\|_{C^{k+\beta}(\Omega)}\to0$ for all $\varphi\in C_\mathrm{c}^{\infty}(\overline{\Omega})$. Moreover, we let $C_\be^{k+\beta}(\overline{\Omega})$ be the Banach space of functions $u\in C^{k+\beta}(\overline{\Omega})$ such that $\|u\|_{C^{k+\beta}(\Omega)}<\infty$. 

When $\Omega$ is unbounded, we denote by $C_0^k(\overline{\Omega})\subseteq C^k_\be(\overline{\Omega})$ the closed subspace of functions whose partial derivatives up to order $k$ vanish uniformly at infinity, that is
\begin{equation}\label{C0}
  C_0^k(\overline{\Omega}):=\bigg\{u\in C_\be^k(\overline \Omega): \lim_{r\to\infty}\sup_{|x|=r}|D^ju(x)|=0\text{ for }0\leq j\leq k \bigg\}.
\end{equation}
We also define weighted Hölder spaces allowing for exponential growth in the $x_1$ direction. Let $C_\mu^{k+\beta}(\overline{\Omega})$ be the space of functions $u\in C^{k+\beta}(\overline \Omega)$ with $\|u\|_{C_\mu^{k+\beta}(\Omega)}<\infty$, where
\begin{equation*}
	\|u\|_{C_\mu^{k+\beta}(\Omega)}:=\sum_{|\alpha|\leq k}\|\operatorname{sech}(\mu x_1)\partial^\alpha u\|_{C^0(\Omega)}+\sum_{|\alpha|=k}\|\operatorname{sech}(\mu x_1)|\partial^\alpha u|_\beta\|_{C^0(\Omega)}.
\end{equation*}
Here $\mu>0$ and $|u|_\beta$ is a local Hölder seminorm
\begin{equation*}
	|u|_\beta(x):=\sup_{|y|<1}\frac{|u(x+y)-u(x)|}{|y|^\beta}.
\end{equation*} 

For the remainder of the paper, we fix the Hölder exponent $\beta\in(0,1)$, once and for all.

\section{Formulation} \label{sect: Formulation}
\subsection{Stream function formulation}
Due to the incompressibility condition \eqref{incompressibility}, we can introduce a \emph{stream function} $\Psi$ defined by
\begin{equation}\label{stream function Psi}
	\Psi_X=-V,\qquad\Psi_Y=U.
\end{equation} 
The kinematic boundary conditions \eqref{kinematic top}--\eqref{kinematic bottom} imply that $\Psi$ is constant on the free surface and on the bed.  We can hence normalize $\Psi$ such that
\begin{equation*}
	\Psi=m\quad\text{on }\mathcal{S}\qquad\text{and}\qquad\Psi=0\quad\text{on }Y=0,
\end{equation*} 
for some constant $m$ which we call the \emph{mass flux}. Taking the curl of \eqref{horizontal momentum} and \eqref{vertical momentum} we see that 
\begin{equation*}
  \Delta\Psi = \omega.
\end{equation*}
Finally, Bernoulli's law states that 
\begin{equation}\label{bernoulli}
  P+\tfrac{1}{2}|\nabla\Psi|^2+gY-\omega\Psi=\text{constant}\qquad\text{in }\Omega,
\end{equation} 
as can be verified by differentiation. Combining all the above considerations, we get
\begin{subequations}\label{originalproblem}
\begin{alignat}{2}
\label{psi harmonic}
\Delta\Psi&=\omega&\qquad&\text{in}\quad\Omega,\\
\label{psi top}
\Psi&=m&\qquad&\text{on}\quad\mathcal{S},\\
\label{psi bottom}
\Psi&=0&\qquad&\text{on}\quad Y=0,\\
\label{psi dynamic}
\vert\nabla\Psi\vert^2+2g(Y-d)&=Q&\qquad&\text{on}\quad\mathcal{S}.
\end{alignat}
\end{subequations}
Here \eqref{psi dynamic} is the result of evaluating \eqref{bernoulli} on the surface $\mathcal{S}$, and we call $Q$ the \emph{Bernoulli constant}. 

A solitary wave solution to \eqref{originalproblem} must also solve the asymptotic conditions \eqref{asymptotic surface} on the free surface along with 
\begin{equation}\label{asymptotic streamfunction 2}
\Psi_X\to0,\qquad \Psi_Y\to F\sqrt{gd}\bigg(\gamma\frac{Y-d}{d}+1\bigg)\qquad\text{as}\,\,X\to\pm\infty,
\end{equation}
uniformly in $Y$. Moreover, using \eqref{asymptotic streamfunction 2} it is easy to see that the Bernoulli constant $Q$ and the mass flux $m$ are given in terms of the Froude number by 
\begin{equation}\label{Q and m}
	Q=F^2gd\qquad\text{and}\qquad m=Fg^{1/2}d^{3/2}\big(\tfrac{1}{2}\gamma-1\big),
\end{equation}
while and the dimensionless measure $\gamma$ of the vorticity $\omega$ is given by
\begin{equation*}
	\gamma=\omega\frac{g^{1/2}}{F d^{1/2}}.
\end{equation*}

\subsection{Conformal mapping}\label{subsect: conformal map}
We now wish to view $\Omega$ as the conformal image of an infinite strip. Let $\mathcal{R}$ be defined as in \eqref{strip R} and $X+iY=\xi(x,y)+i\eta(x,y)$ be a conformal map from $\mathcal{R}$ to $\Omega$, which we normalize by requiring
\begin{equation*}
\xi_x+i\eta_x\to1\quad\text{as }x\to\infty.
\end{equation*}
Since both domains are simply connected, the existence of this transformation is straightforward, unlike the analogue for periodic surfaces as outlined in \cite{cv:constant}. Moreover, the mapping is constructed such that surface of the strip $\Gamma$ is mapped to the free surface which can hence be parameterized as
\begin{equation}\label{surface parametrization}
\mathcal{S}=\{(\xi(s,d),\eta(s,d)): s\in\mathbb{R} \}.
\end{equation}
We will consider solutions to problem \eqref{euler} for which $\Omega$ is of class $C^{3+\beta}$. By Theorem 3.6 in \cite{pommerenke:book}, known as the Kellogg--Warschawski theorem, we conclude that $\xi$ and $\eta$ are of class $C^{3+\beta}(\overline{\mathcal{R}})$. We will see at the end of Section~\ref{sect: Existence Results} that solutions to the problem in conformal variables \eqref{fullproblem} will also yield solutions to \eqref{euler} as claimed in Theorem~\ref{thm: main result}. 

We now rewrite \eqref{originalproblem} in conformal variables. For convenience, we define the stream function in the strip by
\begin{equation}\label{conformal streamfunction}
	\psi(x,y):=\Psi(\xi(x,y),\eta(x,y)).
\end{equation}
So that we can work with harmonic functions, we define the function $\zeta:\mathcal{R}\to\mathbb{R}$ by
\begin{equation}\label{zeta dim}
\zeta(x,y):=\psi(x,y)-\tfrac{1}{2}\omega\eta^2(x,y).
\end{equation}
Expressing \eqref{originalproblem} in these new variables yields
\begin{subequations}\label{fullproblem dim}
	\begin{alignat}{2}
	\label{harmonic zeta dim}
	\Delta\zeta&=0&\qquad&\text{in }\mathcal{R},\\
	\label{kinematic dim}
	\zeta&=m-\tfrac{1}{2}\omega \eta^2&\qquad&\text{on }\Gamma,\\
	\label{dynamic dim}
	(\zeta_y+\omega \eta\eta_y)^2&=(Q-2g (\eta-d))|\nabla\eta|^2&\qquad&\text{on }\Gamma,\\
	\label{bottom zeta dim}
	\zeta&=0&\qquad&\text{on }B,
	\end{alignat}
	which we couple with the conditions
	\begin{alignat}{2}
	\label{harmonic eta dim}
	\Delta\eta&=0&\qquad&\text{in }\mathcal{R},\\  
	\label{bottom eta dim}
	\eta&=0&\qquad&\text{on }B,
	\end{alignat}
	on the imaginary part of the conformal map. Notice that solving for $\eta$ will also provide us, up to a constant, with the harmonic conjugate $\xi$. Finally, the asymptotic conditions \eqref{asymptotic surface} and \eqref{asymptotic streamfunction 2} become 
	\begin{align}
	\label{asymptotic eta dim}
	\lim_{x\to\pm\infty}\zeta(x,y)=\big(m-\tfrac{1}{2}\omega d^2\big)\frac{y}{d}\quad\text{and}\quad\lim_{x\to\pm\infty}\eta(x,y)=y,
	\end{align}
	uniformly in $y$. 
\end{subequations}

\subsection{Final reformulation}\label{subsect: nondim}
First, we reduce \eqref{fullproblem dim} to a one-parameter problem. In order to achieve this we rescale, in both the $(x,y)$ and the $(X,Y)$ variables, the lengths by $d$ and the velocities by $F\sqrt{gd}$. Using \eqref{Q and m}, the process follows in a straightforward way. To simplify notation, from this point forth, we set 
\begin{equation*}\label{alpha}
	\alpha=\frac{1}{F^2},
\end{equation*}
which we will refer to as the \textit{wave speed parameter}. The critical Froude number appearing in Theorem~\ref{thm: dim froude} corresponds to 
\begin{equation}\label{alpha crit}
	\alpha_\crit=1-\gamma.
\end{equation}
By abuse of notation we keep the same name for the non-dimensionalized variables and quantities. One can check that the dimensionless vorticity is now $\omega=\gamma$, and for the rest of the paper, we will only use the notation $\gamma$. 

In these non-dimensional variables \eqref{harmonic zeta dim}--\eqref{bottom eta dim} become
\begin{subequations}\label{fullproblem}
	\begin{alignat}{2}
	\label{harmonic eta}
	\Delta\eta&=0&\qquad&\text{in }\mathcal{R},\\
	\label{harmonic zeta}
	\Delta\zeta&=0&\qquad&\text{in }\mathcal{R},\\ 
	\label{kinematic}
	\zeta&=-\tfrac{1}{2}\gamma+1-\tfrac{1}{2}\gamma \eta^2&\qquad&\text{on }\Gamma,\\
	\label{dynamic}
	(\zeta_y+\gamma \eta\eta_y)^2&=(1-2\alpha (\eta-1))|\nabla\eta|^2&\qquad&\text{on }\Gamma,\\
	\label{bottom eta}
	\eta&=0&\qquad&\text{on }B,\\
	\label{bottom zeta}
	\zeta&=0&\qquad&\text{on }B.
	\end{alignat}
	We additionally require the regularity
	\begin{gather}
	\label{regularity}
	\eta,\zeta \in C_{\mathrm{b}}^{3+\beta}(\overline{\mathcal{R}}),
	\end{gather}
	and the symmetry
	\begin{equation}\label{symmetry}
	\text{$\eta$ and $\zeta$ are even in $x$}.
	\end{equation}
	
	Because of our non-dimensionalization, 
	\begin{equation*}
		\eta=y\quad\text{and}\quad\zeta=(1-\gamma)y
	\end{equation*} 
	solve \eqref{harmonic eta}--\eqref{regularity} for any $\alpha$. We refer to these as \emph{laminar}, or \emph{trivial} solutions. This motivates looking at the differences
	\begin{equation}\label{tilde}
		w_1:=\eta-y\qquad\text{and}\qquad w_2:=\zeta-(1-\gamma)y.
	\end{equation}
	For convenience we will denote $w=(w_1,w_2)$. We will also strengthen the asymptotic condition \eqref{asymptotic eta dim} so that $w$ and its first and second partials vanish at infinity. That is, we require that
	\begin{equation}\label{tilde regularity}
		w\in C_0^2(\overline{\mathcal{R}}),
	\end{equation}
	with the function space $C_0^2$ defined as in \eqref{C0}. It will be useful in Section~\ref{sect: Functional Analytic}, when working in a functional analytic setting, to reformulate \eqref{fullproblem} in terms of $w$. However, for all qualitative results we will work with the formulation in terms of $\eta$ and $\zeta$.

Finally, we assume that 
\begin{equation}\label{no stagnation on free surface}
\inf_{\mathcal{R}}(1-2\alpha (\eta-1))^2|\nabla\eta|^2>0.
\end{equation}
The first factor not vanishing implies that we cannot have a wave of greatest height, and the second one being nonzero ensures that $\eta$ defines the imaginary part of a conformal mapping. Moreover, one can check that \eqref{no stagnation on free surface} holds whenever $\inf_\mathcal{S}|\nabla\Psi|>0$, in other words, whenever there are no stagnation points on the free surface.  We will see in Section~\ref{sect: Functional Analytic} that \eqref{no stagnation on free surface} is related to the so-called \emph{Lopatinskii constant} of a suitable linearized elliptic problem. 
\end{subequations}

\subsection{Velocity field in conformal variables}
In Sections~\ref{sect: Flow Force} and \ref{sect: Nodal Analysis} we will work with the velocity field of the fluid, and so it is useful to have notation for the velocity components $U$ and $V$ as functions of the conformal variables $x$ and $y$:
\begin{equation*}
	u(x,y):=U(\xi(x,y),\eta(x,y))\quad\text{and}\quad v(x,y):=V(\xi(x,y),\eta(x,y)).
\end{equation*}
Using the chain rule, \eqref{stream function Psi}, \eqref{conformal streamfunction} and \eqref{zeta dim} we get
\begin{equation}\label{conformal velocity}
	(u,v)=\bigg(\frac{\eta_x\zeta_x+\eta_y\zeta_y}{\eta^2_x+\eta^2_y}+\gamma \eta,\frac{\eta_x\zeta_y-\eta_y\zeta_x}{\eta^2_x+\eta^2_y} \bigg).
\end{equation}
Working with \eqref{conformal velocity} directly often becomes tedious and so we collect properties of $u$ and $v$ here to make further calculation throughout the paper more transparent. 

To begin with, notice that both components of the velocity field are harmonic functions in $\mathcal{R}$ since 
\begin{equation*}
u-iv=\frac{\zeta_y+i\zeta_x}{\eta_y+i\eta_x}+\gamma\eta,
\end{equation*}
where the right hand side is holomorphic. Moreover, \eqref{no stagnation on free surface} ensures that the denominator in \eqref{conformal velocity} never vanishes. It can now easily be checked that $u$ and $v$ solve
\begin{subequations}\label{velocity problem}
	\begin{alignat}{2}
		\label{invariant 1}
		u_x+v_y&=\gamma\eta_x&\qquad&\text{in }\mathcal{R},\\
		\label{invariant 2}
		u_y-v_x&=\gamma\eta_y&\qquad&\text{in }\mathcal{R},\\
		\label{kinematic velocity}
		u\eta_x-v\eta_y&=0&\qquad&\text{on }\Gamma,\\
		\label{dynamic velocity}
		u^2+v^2+2\alpha(\eta-1)&=1&\qquad&\text{on }\Gamma,\\
		\label{kinematic velocity bottom}
		v&=0&\qquad&\text{on }B,
	\end{alignat}
	where \eqref{invariant 1} and \eqref{invariant 2} are due to the incompressibility condition and the vorticity equation, respectively. Plugging \eqref{tilde regularity} into \eqref{conformal velocity} yields the asymptotic conditions
	\begin{equation}\label{asymptotic velocity}
		\lim_{x\to\pm\infty}u=(1-\gamma)+\gamma y,\qquad\lim_{x\to\pm\infty}v=0.
	\end{equation}
	Finally, \eqref{regularity} yields the regularity
	\begin{equation}
		u,v\in C_\be^{2+\beta}(\overline{\mathcal{R}}),
	\end{equation}
	and from \eqref{symmetry} we see that
	\begin{equation}
		u\text{ is even in }x\quad\text{ and }\quad v\text{ is odd in }x.
	\end{equation}
\end{subequations}
\begin{remark}
	It is interesting to note that \eqref{velocity problem} combined with \eqref{harmonic eta} and \eqref{bottom eta} provides us with a closed system for $u,v$ and $\eta$ in $\mathcal{R}$.
\end{remark}

\section{Flow force}\label{sect: Flow Force}
In this section, we will work with the flow force, an invariant for steady waves. Its definition is motivated by the divergence form of the horizontal component \eqref{horizontal momentum} of the momentum equation, 
\begin{equation}\label{divergence form of Euler}
	\big(P-P_{\text{atm}}+U^2\big)_X+\big(VU\big)_Y=0.
\end{equation}
Traditionally, the flow force is defined as  
\begin{equation}\label{flowforce physical usually}
\int\big(P-P_{\text{atm}}+U^2\big)\,dY,
\end{equation}
where the integral is taken over a vertical cross section of the fluid. By differentiating \eqref{flowforce physical usually} with respect to $X$ and using the boundary conditions \eqref{kinematic top}--\eqref{pressure}, one can show that the integral is independent of $X$. 

However, our case is slightly more complicated as we are studying waves which can have overhanging profiles. Therefore, it is more convenient to integrate along lines of constant $x$ and define the flow force as
\begin{equation}\label{flowforce physical}
S=\int_{x=\text{constant}}\big(P-P_{\text{atm}}+U^2\big)\,dY-UV\,dX.
\end{equation} 
where the second term in the integrand comes from the second term in \eqref{divergence form of Euler}. We will need the flow force in two different instances: in Section~\ref{subsect: Nonexistence of Bores} to eliminate the existence of bores and in Section~\ref{subsect: Froude number} to provide upper bounds for the wave speed parameter $\alpha$, proving Theorem~\ref{thm: dim froude}. We begin by collecting certain useful facts about this quantity in the following lemma.
\begin{lemma}\label{lem: flowforce indep of x}
  Assume $(\eta,\zeta,\alpha)$ solves \eqref{harmonic eta}--\eqref{regularity}. Then the flow force \eqref{flowforce physical} is given by
	\begin{equation}\label{flowforce}
\begin{aligned}
	S(x;\eta,\zeta,\alpha):=\frac{1}{2}\int_{0}^{1}\frac{\eta_y(\zeta_y^2-\zeta_x^2)+2\eta_x\zeta_x\zeta_y}{\eta_x^2+\eta_y^2}\,dy
	-\bigg(\frac{\gamma^2}{6}\eta^3+\frac{\alpha}{2}\eta^2-\frac{2\alpha+1}{2}\eta\bigg) \bigg|_{y=1},
\end{aligned}
	\end{equation}
	and is independent of $x$.
\end{lemma}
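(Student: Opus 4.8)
The plan is to split the proof into two parts: showing that the physical flow force \eqref{flowforce physical} does not depend on the cross-section $\{x=\text{const}\}$ along which it is evaluated, and then computing it in conformal variables to obtain the right-hand side of \eqref{flowforce}.

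For the first part, I would use the fact that \eqref{divergence form of Euler} says exactly that the planar vector field $(P-P_{\mathrm{atm}}+U^2,\,UV)$ is divergence free in $\Omega$. Fix $a<b$ and apply the divergence theorem on the bounded region $\mathcal{D}_{a,b}\subset\Omega$, the conformal image of $\{(x,y):a<x<b,\ 0<y<1\}$, whose boundary consists of the two cross-sections $\{x=a\}$, $\{x=b\}$, a piece of the bed, and a piece of the free surface $\mathcal S$. The bed contributes nothing because $V=0$ there by \eqref{kinematic bottom}; on $\mathcal S$ we have $P=P_{\mathrm{atm}}$ by \eqref{pressure}, so the field reduces to $U(U,V)$, and the kinematic condition \eqref{kinematic top} says precisely that $(U,V)$ is tangent to $\mathcal S$, killing that contribution as well. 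Hence the fluxes through $\{x=a\}$ and $\{x=b\}$ agree. Parametrizing a cross-section by $y\mapsto(\xi(x,y),\eta(x,y))$ and using the Cauchy--Riemann relations $\xi_x=\eta_y$, $\xi_y=-\eta_x$, so that $dY=\eta_y\,dy$ and $dX=-\eta_x\,dy$, identifies that flux with $S$ in \eqref{flowforce physical}; independence of $x$ follows. No decay at infinity is needed, since $\mathcal{D}_{a,b}$ is bounded.

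For the second part, with the same parametrization,
\begin{equation*}
  S=\int_0^1\Big[(P-P_{\mathrm{atm}}+U^2)\,\eta_y+UV\,\eta_x\Big]\,dy .
\end{equation*}
Here $U=u$, $V=v$ are given in conformal variables by \eqref{conformal velocity}, and $P-P_{\mathrm{atm}}$ is eliminated using Bernoulli's law \eqref{bernoulli} in non-dimensional form together with $\psi=\zeta+\tfrac12\gamma\eta^2$; the additive constant in Bernoulli is pinned down by evaluating on $\mathcal S$ and invoking \eqref{dynamic} and \eqref{kinematic}. The computation hinges on the Lagrange-type identities
\begin{equation*}
  (\eta_x\zeta_x+\eta_y\zeta_y)^2+(\eta_x\zeta_y-\eta_y\zeta_x)^2=(\eta_x^2+\eta_y^2)|\nabla\zeta|^2,\qquad
  (\eta_x\zeta_x+\eta_y\zeta_y)\eta_y+(\eta_x\zeta_y-\eta_y\zeta_x)\eta_x=(\eta_x^2+\eta_y^2)\zeta_y,
\end{equation*}
together with a third similar one (most cleanly the real part of $(\zeta_y+i\zeta_x)^2/(\eta_y+i\eta_x)$), which after cancellation reduce the integrand to
\begin{equation*}
  \frac{\eta_y(\zeta_y^2-\zeta_x^2)+2\eta_x\zeta_x\zeta_y}{2(\eta_x^2+\eta_y^2)}
  +\partial_y\!\left[\Big(\tfrac12+\alpha-\gamma+\tfrac12\gamma^2\Big)\eta-\tfrac{\alpha}{2}\eta^2+\gamma\,\eta\zeta+\tfrac13\gamma^2\eta^3\right].
\end{equation*}
Integrating in $y$, the perfect-derivative term is a boundary contribution that vanishes at $y=0$ because $\eta=\zeta=0$ there by \eqref{bottom eta}, \eqref{bottom zeta}, while at $y=1$ one substitutes $\zeta=-\tfrac12\gamma+1-\tfrac12\gamma\eta^2$ from \eqref{kinematic} and collects terms; the $\eta$, $\eta^2$ and $\eta^3$ coefficients become $\tfrac12+\alpha$, $-\tfrac{\alpha}{2}$ and $-\tfrac16\gamma^2$ respectively, giving exactly $-\big(\tfrac{\gamma^2}{6}\eta^3+\tfrac{\alpha}{2}\eta^2-\tfrac{2\alpha+1}{2}\eta\big)\big|_{y=1}$, and hence \eqref{flowforce}.

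I expect the main obstacle to be the second part: one has to non-dimensionalize Bernoulli's law with the correct constant (this is where the $\alpha$-dependent coefficients of \eqref{flowforce} come from) and then verify through a somewhat lengthy but mechanical computation that, after substituting \eqref{conformal velocity} and Bernoulli, everything other than the conformal Dirichlet-type density reorganizes into a single $y$-derivative. The first part is routine, the only points needing care being that the region between two cross-sections is bounded and that the free-surface contribution vanishes by combining \eqref{pressure} with \eqref{kinematic top}.
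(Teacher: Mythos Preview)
Your argument is correct, and the computation of the formula \eqref{flowforce} is essentially the same as the paper's: both substitute Bernoulli into \eqref{flowforce physical}, use the identity you write as the real part of $(\zeta_y+i\zeta_x)^2/(\eta_y+i\eta_x)$ (which is precisely the paper's ``tedious'' identity \eqref{flow force tedious}), and then integrate the resulting perfect $y$-derivative using \eqref{kinematic} at $y=1$.

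The genuine difference is in how independence of $x$ is proved. You establish it first, in physical variables, by applying the divergence theorem to the divergence-free field $(P-P_{\mathrm{atm}}+U^2,UV)$ on the bounded region $\mathcal D_{a,b}\subset\Omega$; the boundary contributions on the bed and free surface vanish by \eqref{kinematic bottom} and \eqref{pressure}--\eqref{kinematic top}. The paper instead proves independence \emph{after} deriving \eqref{flowforce}, entirely in conformal variables: it differentiates the integral in $x$, uses that the integrand is the real part of a holomorphic function to convert $\partial_x$ into a boundary term at $y=1$, and then checks via \eqref{kinematic}--\eqref{dynamic} that this boundary term exactly cancels the $x$-derivative of the polynomial in $\eta|_{y=1}$. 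Your route is shorter and more transparent physically; the paper's route has the advantage that it lives entirely in the strip $\mathcal R$ and so does not rely on $\xi+i\eta$ being a global conformal bijection onto a well-defined fluid domain $\Omega$---a point that is only verified later in the paper (in the proof of Theorem~\ref{thm: main result}) for solutions on the curve $\mathscr C$. If you want your argument to apply at the level of generality of the lemma (arbitrary solutions of \eqref{harmonic eta}--\eqref{regularity}), you could rephrase the divergence-theorem step as integration of the pulled-back closed $1$-form over the rectangle $\{a<x<b\}\subset\mathcal R$, which avoids any injectivity assumption.
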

\begin{proof}
	In our dimensionless variables \eqref{bernoulli} takes the form
	\begin{equation}\label{bernoulli dimless}
		P+\tfrac{1}{2}(U^2+V^2)+\alpha(\eta-1)-\gamma\Psi=1+P_\text{atm}-\gamma\big(1-\tfrac{1}{2}\gamma\big),
	\end{equation}
	where here we have explicitly provided the constant term on the right hand side.	Using \eqref{bernoulli dimless} expressed in terms of the conformal variables, \eqref{flowforce physical} becomes
	\begin{equation}\label{flow force messy}
	\begin{aligned}
	S(x;\eta,\zeta,\alpha)=\int_{0}^{1}\Big(\tfrac{1}{2}(u^2-v^2)-\alpha(\eta-1)+\tfrac{1}{2}+\gamma\big(\zeta+\tfrac{1}{2}\gamma\eta^2-1+\tfrac{1}{2}\gamma \big) \eta_y+uv\eta_x\Big)\,dy.
	\end{aligned}
	\end{equation}
	A tedious yet straightforward calculation shows that 
	\begin{equation}\label{flow force tedious}
	\begin{aligned}
	\tfrac{1}{2}\big(u^2-v^2+2\gamma\zeta\big)\eta_y+uv\eta_x=\Big[\gamma \eta\big(\zeta+\tfrac{1}{6}\gamma \eta^2\big)\Big]_y+\frac{1}{2}\frac{\eta_y(\zeta_y^2-\zeta_x^2)+2\eta_x\zeta_x\zeta_y}{\eta_x^2+\eta_y^2}.
	\end{aligned}
	\end{equation}
  Inserting \eqref{flow force tedious} into \eqref{flow force messy}, integrating several total derivatives explicitly and simplifying the resulting boundary terms using the kinematic boundary condition \eqref{kinematic} yields \eqref{flowforce}.
	
  In order to show that $S$ is invariant, we first observe that the integrand in \eqref{flowforce} is the real part of the holomorphic function
  \begin{equation}\label{holomorphic}
    \frac{(\zeta_y+i\zeta_x)^2}{\eta_y+i\eta_x}
    =
    \frac{\eta_y(\zeta_y^2-\zeta_x^2)+2\eta_x\zeta_x\zeta_y}{\eta_x^2+\eta_y^2}
    +i\,\frac{2\eta_y\zeta_x\zeta_y-\eta_x(\zeta_y^2-\zeta_x^2)}{\eta_x^2+\eta_y^2}.
  \end{equation}
  Differentiating under the integral and using the Cauchy--Riemann equations we therefore obtain
  \begin{align}
    \label{has a rhs}
    \frac d{dx} \int_0^1 
    \frac 12 \frac{\eta_y(\zeta_y^2-\zeta_x^2)+2\eta_x\zeta_x\zeta_y}{\eta_x^2+\eta_y^2}
    \, dy
    = 
    \frac 12 \frac{2\eta_y\zeta_x\zeta_y-\eta_x(\zeta_y^2-\zeta_x^2)}{\eta_x^2+\eta_y^2}
    \bigg|_{y=1},
  \end{align}
  where the boundary term at $y=0$ vanishes thanks to \eqref{bottom eta} and \eqref{bottom zeta}. Differentiating the kinematic boundary condition \eqref{kinematic}, we can replace $\zeta_x$ with $-\gamma \eta\eta_x$ on the right hand side of \eqref{has a rhs}. Completing the square in $\zeta_y$, we can then completely eliminate $\zeta$ by using the dynamic boundary condition \eqref{dynamic}. Factoring, we obtain 
  \begin{align}
    \label{holomorphic surface stuff}
    \frac 12 \frac{2\eta_y\zeta_x\zeta_y-\eta_x(\zeta_y^2-\zeta_x^2)}{\eta_x^2+\eta_y^2}
    = \bigg(\frac \gamma 2 \eta^2 + \alpha \eta - \frac{2\alpha +1}2 \bigg) \eta_x 
    \quad \text{ on } \Gamma,
  \end{align}
  and the result follows.
\end{proof}

\subsection{Nonexistence of bores and compactness}\label{subsect: Nonexistence of Bores}
Since we work in an unbounded domain, we no longer have compact embeddings between Hölder spaces. However, we will see that, for monotone waves, the only way to lose compactness is through the existence of a bore. The aim of the first part of this subsection is to rule out the latter possibility. 

A \emph{bore} is a solution $(\eta,\zeta,\alpha)$ which solves \eqref{harmonic eta}--\eqref{regularity} and satisfies  
\begin{equation}\label{definition of bore precis}
	\lim_{x\to\pm\infty}(\eta,\zeta)(x,y)=(\eta_{\pm}(y),\zeta_{\pm}(y)),
\end{equation}
with distinct limits $(\eta_-,\zeta_-)$ and $(\eta_+,\zeta_+)$. By a translation argument these limits must also solve \eqref{harmonic eta}--\eqref{regularity} and  hence are of the form
\begin{equation*}
	\eta_{\pm}(y)=\hat{\eta}_\tr(y;d_\pm),\qquad\zeta_\pm(y)=\hat{\zeta}_\tr(y;d_\pm),
\end{equation*}
with $d_-\neq d_+$ where
\begin{equation}\label{parameterized trivial solutions}
\begin{aligned}
&\hat{\eta}_\tr(y;d):=dy,\\
&\hat{\zeta}_\tr(y;d):=\bigg(\frac{2-\gamma}{2d}-\frac{\gamma d}{2}\bigg)dy.
\end{aligned}
\end{equation}
Notice that solutions to \eqref{fullproblem} satisfy \eqref{definition of bore precis}--\eqref{parameterized trivial solutions} with $d_+=d_-=1$. Since \eqref{parameterized trivial solutions} also solves the dynamic boundary condition \eqref{dynamic}, we obtain
\begin{equation}\label{parameterized Q equality}
	\hat{Q}(d_-)=\hat{Q}(d_+)=\hat{Q}(1),
\end{equation}
where
\begin{equation}\label{Q}
\hat{Q}(d)=\frac{1}{d^2}\bigg(\frac{2-\gamma}{2}+\frac{\gamma d^2}{2} \bigg)^2+2\alpha(d-1). 
\end{equation}
Moreover, defining
\begin{equation}\label{parameterized flowforce trivial}
\hat{S}(d):=S\big(x;(\hat{\eta}_\tr(\cdot\,;d),\hat{\zeta}_\tr(\cdot\,;d) )\big),
\end{equation}
the invariance of the flow force given in Lemma~\ref{lem: flowforce indep of x} implies that
\begin{equation}\label{conjugate flows}
\hat{S}(d_-)=\hat{S}(d_+)=\hat{S}(1).
\end{equation}
Together, \eqref{parameterized Q equality} and \eqref{conjugate flows} are the so-called \emph{conjugate flow equations} for our problem \cite{benjamin:impulse}. 
 
The following two lemmas collect properties of the parameterized forms \eqref{Q} and \eqref{parameterized flowforce trivial} of the Bernoulli constant and the flow force. We point out that, in the absence of critical layers, analogues of these lemmas have been shown in \cite{pressure}; also see \cite{strat}. 

\begin{lemma}\label{lem:bernoulli}
	The function $\hat{Q}$ in \eqref{Q} is a strictly convex function of the asymptotic depth $d>0$.  In particular, it admits a unique minimum, $d=d_\crit$. Moreover, there exists a unique $d_*$ with $\hat{Q}(d_*)=\hat{Q}(1)$. For $\alpha<\alpha_\crit$ we have $d_*\in(d_\crit,\infty)$, while for $\alpha>\alpha_\crit$ we have $d_*\in(0,d_\crit)$.
\end{lemma}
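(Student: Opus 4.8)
The plan is to make the expansion of $\hat Q$ explicit and then read off everything from elementary convexity. Setting $a := \tfrac{2-\gamma}{2}$ and $b := \tfrac{\gamma}{2}$, a direct computation gives, for $d>0$,
\begin{equation*}
	\hat Q(d) = \frac{a^2}{d^2} + b^2 d^2 + 2\alpha d + 2ab - 2\alpha .
\end{equation*}
Differentiating twice yields $\hat Q''(d) = 6a^2 d^{-4} + 2b^2$, which is strictly positive on $(0,\infty)$ because $a$ and $b$ cannot vanish simultaneously (that would force $\gamma = 2$ and $\gamma = 0$ at once). This proves strict convexity. For the unique minimum I would use coercivity: since $\alpha = 1/F^2 > 0$ and $a \neq 0$ (indeed $a > \tfrac12$ whenever $\gamma < 1$), we have $\hat Q(d) \to +\infty$ as $d \to 0^+$ (from the $a^2/d^2$ term) and as $d \to +\infty$ (from the $b^2 d^2 + 2\alpha d$ terms). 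A strictly convex coercive function on $(0,\infty)$ has a unique minimiser $d_\crit$, with $\hat Q' < 0$ on $(0,d_\crit)$ and $\hat Q' > 0$ on $(d_\crit,\infty)$.

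Next I would locate $d_\crit$ relative to the trivial depth $d = 1$. From the expansion,
\begin{equation*}
	\hat Q'(1) = -2a^2 + 2b^2 + 2\alpha = 2\big(\alpha - (a^2 - b^2)\big) = 2(\alpha - \alpha_\crit),
\end{equation*}
where we used the identity $a^2 - b^2 = \big(1 - \tfrac{\gamma}{2}\big)^2 - \big(\tfrac{\gamma}{2}\big)^2 = 1 - \gamma = \alpha_\crit$. Since $\hat Q'$ is strictly increasing with its only zero at $d_\crit$, the sign of $\hat Q'(1)$ pins down the position of $d_\crit$: if $\alpha < \alpha_\crit$ then $\hat Q'(1) < 0$, hence $d_\crit > 1$, while if $\alpha > \alpha_\crit$ then $\hat Q'(1) > 0$, hence $d_\crit < 1$.

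Finally, for $d_*$ I would argue purely from strict convexity and coercivity. Assume $\alpha \neq \alpha_\crit$, so $d_\crit \neq 1$ and therefore $\hat Q(1) > \hat Q(d_\crit)$. On $(0,d_\crit)$ the function $\hat Q$ decreases strictly from $+\infty$ to $\hat Q(d_\crit)$, so the equation $\hat Q(d) = \hat Q(1)$ has exactly one solution there; likewise it has exactly one solution on $(d_\crit,\infty)$, and none at $d_\crit$ itself. One of these two solutions is $d = 1$; the other is the claimed unique $d_* \neq 1$. When $\alpha < \alpha_\crit$ we have $d_\crit > 1$, so $d = 1$ is the solution lying in $(0,d_\crit)$ and hence $d_* \in (d_\crit,\infty)$; when $\alpha > \alpha_\crit$ we have $d_\crit < 1$, so $d = 1$ is the solution in $(d_\crit,\infty)$ and hence $d_* \in (0,d_\crit)$.

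There is no genuine obstacle in this argument; it is entirely elementary. The only points needing a moment's care are the bookkeeping in the expansion of $\hat Q$, the observation that $a$ and $b$ are never simultaneously zero (so that convexity is strict), and the harmless exclusions of $\gamma = 2$ (where coercivity at $d \to 0^+$ fails) and of $\alpha = \alpha_\crit$ (where $d_*$ merges with the trivial depth $d = 1$); both excluded cases are irrelevant in the regime $\gamma < 1$, $\alpha < \alpha_\crit$ that is ultimately used.
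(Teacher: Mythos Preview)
Your proof is correct and follows essentially the same approach as the paper's: compute $\hat Q''(d)>0$ for strict convexity, use coercivity at the endpoints for the unique minimum, and determine the position of $d_\crit$ relative to $1$ via the sign of $\hat Q'(1)=2(\alpha-\alpha_\crit)$. You simply supply more detail than the paper's terse version, including the explicit expansion of $\hat Q$ and a fuller discussion of the location of $d_*$.
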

\begin{proof}
	Differentiating \eqref{Q} twice with respect to $d$, we find
	\begin{align*}
    \hat{Q}'(1) = 2(\alpha-\alpha_\crit), 
    \qquad 
    \hat{Q}''(d)=\frac{1}{6d^4}\bigg(1-\frac{\gamma}{2} \bigg)^2+\frac{\gamma^2}{2}>0.
	\end{align*}
	The statement then follows from the fact that $\hat{Q}\to\infty$ both as $d\to\infty$ and $d\to0$.
\end{proof}

\begin{lemma}\label{lem:Sprime}
  The function $\hat{S}(d)$ defined in \eqref{parameterized flowforce trivial} satisfies
	\begin{equation}\label{Sprime}
	\hat{S}'(d)=\frac{1}{2}\big(\hat{Q}(1)-\hat{Q}(d)\big).
	\end{equation}
	In particular, by the convexity of $\hat{Q}$, we have $\hat{S}(d_*)>\hat{S}(1)$ for $\alpha<\alpha_\crit$ and $\hat{S}(d_*)<\hat{S}(1)$ for $\alpha>\alpha_\crit$.
\end{lemma}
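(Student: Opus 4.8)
The plan is to compute $\hat{S}'(d)$ directly from the formula \eqref{flowforce} for the flow force evaluated on the parameterized trivial solutions \eqref{parameterized trivial solutions}, and recognize the result as $\tfrac12(\hat Q(1) - \hat Q(d))$. First I would substitute $\hat\eta_\tr(y;d) = dy$ and $\hat\zeta_\tr(y;d) = c(d)\,dy$, where $c(d) = \tfrac{2-\gamma}{2d} - \tfrac{\gamma d}{2}$, into \eqref{flowforce}. Since $\hat\eta_\tr$ and $\hat\zeta_\tr$ are independent of $x$, the conformal factor simplifies: $\eta_x = 0$, $\eta_y = d$, $\zeta_x = 0$, $\zeta_y = c(d)\,d$, so the integrand of the bulk term becomes $\tfrac12 \eta_y \zeta_y^2 / \eta_y^2 = \tfrac12 \zeta_y^2/\eta_y = \tfrac12 c(d)^2 d$, which integrates over $y \in (0,1)$ to just $\tfrac12 c(d)^2 d$. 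The boundary term at $y=1$ contributes $-\big(\tfrac{\gamma^2}{6}d^3 + \tfrac{\alpha}{2}d^2 - \tfrac{2\alpha+1}{2}d\big)$. Hence
\begin{equation*}
  \hat S(d) = \frac12 c(d)^2 d - \frac{\gamma^2}{6}d^3 - \frac{\alpha}{2}d^2 + \frac{2\alpha+1}{2}d.
\end{equation*}

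Next I would differentiate this explicit expression in $d$ and, separately, write out $\tfrac12(\hat Q(1) - \hat Q(d))$ using \eqref{Q}, and check the two agree. Observe that $\tfrac12 c(d)^2 d = \tfrac1{2d}\big(\tfrac{2-\gamma}{2} + \tfrac{\gamma d^2}{2}\big)^2$ — rewriting $c(d)\,d = \tfrac{2-\gamma}{2} - \tfrac{\gamma d^2}{2} + \gamma d^2 \cdot \tfrac12 \cdot \tfrac{2}{2}$... more directly, $c(d)\,d = \tfrac{2-\gamma}{2} - \tfrac{\gamma d^2}{2}$, wait that is not quite $\tfrac{2-\gamma}{2} + \tfrac{\gamma d^2}{2}$; I should be careful and just track signs honestly. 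The cleaner check is: $\tfrac12 c(d)^2 d = \tfrac{1}{2d}(c(d)d)^2$, and since $\hat Q(d) = \tfrac1{d^2}\big(\tfrac{2-\gamma}{2}+\tfrac{\gamma d^2}{2}\big)^2 + 2\alpha(d-1)$, I need to reconcile $(c(d)d)^2 = \big(\tfrac{2-\gamma}{2} - \tfrac{\gamma d^2}{2}\big)^2$ with the $\big(\tfrac{2-\gamma}{2}+\tfrac{\gamma d^2}{2}\big)^2$ appearing in $\hat Q$. These differ, so in fact $\tfrac12 c(d)^2 d \neq \tfrac{d}{2}\hat Q(d) + (\text{affine in } d)$ in the naive way, and the $-\tfrac{\gamma^2}{6}d^3$ and $-\tfrac\alpha2 d^2 + \tfrac{2\alpha+1}{2}d$ terms must absorb the discrepancy. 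I would expand everything as a polynomial in $d$ (with a $1/d$ term), differentiate, and match against $\tfrac12\hat Q(1) - \tfrac12\hat Q(d)$, also expanded as a polynomial in $d$ with a $1/d^2$ term after differentiation cancels to a $1/d$... actually $\hat S'(d)$ will contain a $1/d^2$ term from differentiating $\tfrac1{2d}(\cdots)^2$, matching the $1/d^2$ structure of $\tfrac12\hat Q(d)$. This is the routine-calculation core of the proof.

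With \eqref{Sprime} established, the second assertion follows immediately from Lemma~\ref{lem:bernoulli}. By that lemma, $d_*$ is the unique point $\neq 1$ with $\hat Q(d_*) = \hat Q(1)$, and strict convexity of $\hat Q$ with minimum at $d_\crit$ means $\hat Q(d) < \hat Q(1)$ for $d$ strictly between $1$ and $d_*$, and $\hat Q(d) > \hat Q(1)$ for $d$ beyond that interval. For $\alpha < \alpha_\crit$ we have $d_* \in (d_\crit, \infty)$ with $d_\crit < 1$ (since $\hat Q'(1) = 2(\alpha - \alpha_\crit) < 0$ forces the minimum to the right of $\ldots$ no: $\hat Q'(1) < 0$ means $1 < d_\crit$), so actually $1 < d_\crit < d_*$; then for $d \in (1, d_*)$, $\hat Q(d) < \hat Q(1)$ so $\hat S'(d) > 0$ on $(1,d_*)$, giving $\hat S(d_*) > \hat S(1)$. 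The case $\alpha > \alpha_\crit$ is symmetric: $d_* < d_\crit < 1$, and integrating $\hat S'(d) = \tfrac12(\hat Q(1) - \hat Q(d))$ from $d_*$ to $1$, the integrand is negative on $(d_*, 1)$, so $\hat S(1) < \hat S(d_*)$, i.e.\ $\hat S(d_*) > \hat S(1)$ — wait, that contradicts the claim. I would need to recheck: the claim is $\hat S(d_*) < \hat S(1)$ for $\alpha > \alpha_\crit$. Since $\hat Q(d) > \hat Q(1)$ for $d \in (d_*, 1)$ would make $\hat S' < 0$ there, giving $\hat S(d_*) > \hat S(1)$; but $\hat Q$ convex with min at $d_\crit \in (d_*,1)$ means on $(d_*,1)$ the value $\hat Q(d)$ dips below $\hat Q(1) = \hat Q(d_*)$, so $\hat S' > 0$ on most of $(d_*,1)$ — I must track this sign structure carefully using convexity, which is the only genuinely delicate point in the deduction; the honest statement is that $\hat S$ is monotone on the relevant interval with the sign dictated by which side of $d_\crit$ we lie, and the integral identity then gives the stated inequalities.

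The main obstacle is not conceptual but bookkeeping: correctly carrying out the polynomial algebra in the derivation of \eqref{Sprime} (the signs in $c(d)d = \tfrac{2-\gamma}{2} - \tfrac{\gamma d^2}{2}$ versus the $+\tfrac{\gamma d^2}{2}$ in $\hat Q$ are the kind of thing that trips one up), and then correctly reading off the sign of $\hat S' = \tfrac12(\hat Q(1) - \hat Q(d))$ on the interval between $1$ and $d_*$ using convexity of $\hat Q$ and the location of $d_\crit$ relative to both endpoints, as determined by the sign of $\alpha - \alpha_\crit$ via $\hat Q'(1) = 2(\alpha - \alpha_\crit)$.
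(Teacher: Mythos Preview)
Your approach is essentially identical to the paper's: compute $\hat S(d)$ explicitly by substituting the trivial solutions into \eqref{flowforce}, differentiate, and compare with $\tfrac12(\hat Q(1)-\hat Q(d))$; then deduce the inequalities from convexity of $\hat Q$. Your explicit formula for $\hat S(d)$ is correct and agrees with the paper's (note $\tfrac{2\alpha+1}{2}d = \alpha d + \tfrac{\hat Q(1)}{2}d$ since $\hat Q(1)=1$), so the ``routine-calculation core'' will go through.

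Your confusion in the second part, however, is real and stems from over-thinking the sign. The clean observation is this: since $\hat Q$ is strictly convex and $\hat Q(1)=\hat Q(d_*)$, the graph lies strictly below the horizontal chord, so $\hat Q(d) < \hat Q(1)$ for \emph{every} $d$ strictly between $1$ and $d_*$. Hence $\hat S'(d) > 0$ on that entire open interval, in \emph{both} cases $\alpha \lessgtr \alpha_\crit$. The sign of $\hat S(d_*)-\hat S(1) = \int_1^{d_*} \hat S'(d)\,dd$ is then determined solely by whether $d_* > 1$ (giving $\hat S(d_*)>\hat S(1)$, the case $\alpha<\alpha_\crit$) or $d_* < 1$ (giving $\hat S(d_*)<\hat S(1)$, the case $\alpha>\alpha_\crit$). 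Your hedge ``$\hat S' > 0$ on most of $(d_*,1)$'' and the attempt to tie the sign of $\hat S'$ to the location of $d_\crit$ are both unnecessary: the paper's proof is exactly the integral argument you wrote for $\alpha<\alpha_\crit$, and the other case is symmetric once you see that $\hat S'>0$ on the whole interval either way.
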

\begin{proof}
	We can calculate $\hat{S}(d)$ explicitly:
	\begin{equation*}		\hat{S}(d)=\frac{(2-\gamma)^2}{8d}-\frac{\gamma^2d^3}{24}-\frac{(2-\gamma)\gamma d}{4}+d\alpha-\frac{d^2\alpha}{2}+\frac{\hat{Q}(1)}{2}d
	\end{equation*}
	from which we easily get \eqref{Sprime}. The last part of the statement follows from Lemma~\ref{lem:bernoulli}. Indeed, assuming without loss of generality that $\alpha<\alpha_\crit$, since $Q(d)<Q(1)$ for $1<d<d_*$, we have
	\begin{equation*}
		S(d_*)-S(1)=\int_{1}^{d_*}\big(Q(1)-Q(s)\big)\,ds>0.\qedhere
	\end{equation*}
\end{proof}

We are now ready to prove the following result.
\begin{lemma}[Nonexistence of bores]\label{lem:nobores}
	The conjugate flow equations \eqref{parameterized Q equality} and \eqref{conjugate flows} have no solutions other than $d=1$. In particular, \eqref{harmonic eta}--\eqref{regularity} does not admit bore solutions as defined in \eqref{definition of bore precis}--\eqref{parameterized trivial solutions}.
\end{lemma}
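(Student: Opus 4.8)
The plan is to combine the two preceding lemmas to rule out any solution $d \neq 1$ of the conjugate flow system. The key observation is that \eqref{parameterized Q equality} forces a candidate asymptotic depth $d_-$ (or $d_+$) distinct from $1$ to satisfy $\hat Q(d) = \hat Q(1)$, and by Lemma~\ref{lem:bernoulli} the \emph{only} such value other than $d=1$ is $d = d_*$. So it suffices to show that $d = d_*$ violates the flow-force condition \eqref{conjugate flows}, i.e.\ that $\hat S(d_*) \neq \hat S(1)$.

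First I would dispose of the trivial case: if $\alpha = \alpha_\crit$, then $\hat Q'(1) = 2(\alpha - \alpha_\crit) = 0$, so by strict convexity $d = 1$ is the unique minimizer of $\hat Q$, and there is no second solution $d_*$ of $\hat Q(d) = \hat Q(1)$ at all; hence the conjugate flow equations have only $d=1$. For $\alpha \neq \alpha_\crit$, Lemma~\ref{lem:bernoulli} gives a unique $d_* \neq 1$ with $\hat Q(d_*) = \hat Q(1)$, lying on the opposite side of $d_\crit$ from $1$. Then Lemma~\ref{lem:Sprime} tells us precisely that $\hat S(d_*) > \hat S(1)$ when $\alpha < \alpha_\crit$ and $\hat S(d_*) < \hat S(1)$ when $\alpha > \alpha_\crit$; in either case $\hat S(d_*) \neq \hat S(1)$. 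Since \eqref{parameterized Q equality} and \eqref{conjugate flows} together would require both $\hat Q(d_\pm) = \hat Q(1)$ and $\hat S(d_\pm) = \hat S(1)$, and the only candidate $d_\pm \ne 1$ fails the second, we conclude $d_- = d_+ = 1$.

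Finally, to get the last sentence of the statement, I would argue by contradiction: a bore solution in the sense of \eqref{definition of bore precis}--\eqref{parameterized trivial solutions} has asymptotic states parameterized by $d_-, d_+$ with $d_- \neq d_+$, and as noted just before the lemma these limits necessarily solve the conjugate flow equations \eqref{parameterized Q equality} and \eqref{conjugate flows}. But we have just shown the only solution of that system is $d = 1$, forcing $d_- = d_+ = 1$, contradicting $d_- \neq d_+$.

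I do not anticipate any genuine obstacle here — the substance is entirely contained in Lemmas~\ref{lem:bernoulli} and~\ref{lem:Sprime}, and this lemma is just the bookkeeping that assembles them. The only mild subtlety worth stating explicitly is the $\alpha = \alpha_\crit$ case, where one should note that no nontrivial $d_*$ exists rather than trying to compare flow forces; and one should be careful that \eqref{parameterized Q equality} is what pins the candidate depth down to be $d_*$, so that Lemma~\ref{lem:Sprime} can then be applied at exactly that point.
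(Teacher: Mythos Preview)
Your proposal is correct and follows essentially the same approach as the paper: handle the critical case $\alpha=\alpha_\crit$ separately, then for $\alpha\neq\alpha_\crit$ use Lemma~\ref{lem:bernoulli} to force $d_\pm\in\{1,d_*\}$ and Lemma~\ref{lem:Sprime} to rule out $d_*$ via the flow-force condition. Your write-up is in fact slightly more explicit than the paper's, spelling out the contradiction with $d_-\neq d_+$ at the end.
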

\begin{proof}
	Let us assume that $(\eta,\zeta)$ is a bore solution to \eqref{harmonic eta}--\eqref{regularity}. Then \eqref{conjugate flows} must hold. If $\alpha=\alpha_\crit$, then arguing as in Lemmas~\ref{lem:bernoulli} and \ref{lem:Sprime}, $d_*=d_-=d_+=1$. Otherwise, \eqref{parameterized Q equality} and Lemma~\ref{lem:bernoulli} imply that $d_\pm\in\{1,d_*\}$, while Lemma~\ref{lem:Sprime} implies $d_-\neq d_*$ and $d_+\neq d_*$. We must therefore have $d_-=d_+=1$.  
\end{proof}

We are now ready to prove compactness. 
\begin{lemma}[Compactness]\label{lem: compactness}
	Let $(\eta_n,\zeta_n,\alpha_n)$ be a sequence of solutions to \eqref{fullproblem}. If
	\begin{equation}\label{uniform bounds}
	\sup_n\|(\eta_n,\zeta_n)\|_{C^{3+ \beta}(\mathcal{R})}<\infty\qquad\text{and}\qquad \inf_n\inf_{\mathcal{R}}(1-2\alpha_n(\eta_{n}-1))^2|\nabla\eta_{n}|^2>0,	\end{equation}
	as well as 
	\begin{equation}\label{monotonicity}
	\partial_x\eta_n\leq0\textup{ for }x\geq0
	\end{equation}
	hold, then we can extract a subsequence with $(\eta_n,\zeta_n)\to(\eta,\zeta)$ in $C_{\mathrm{b}}^{3+\beta}(\overline{\mathcal{R}})$. 
\end{lemma}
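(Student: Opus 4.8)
The plan is to combine interior elliptic estimates with the nonexistence of bores, using the monotonicity assumption to control the behavior at spatial infinity. The overall structure mimics the compactness arguments in \cite{strat} and \cite{paper}: the uniform $C^{3+\beta}$ bounds give local convergence for free, so all the work is in upgrading local convergence to uniform convergence on the unbounded strip $\mathcal{R}$, which amounts to ruling out a ``loss of mass to infinity''.

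First I would use the uniform bound $\sup_n\|(\eta_n,\zeta_n)\|_{C^{3+\beta}(\mathcal{R})}<\infty$ together with the Arzel\`a--Ascoli theorem on compact subsets of $\overline{\mathcal{R}}$ and a diagonal argument to extract a subsequence (not relabeled) converging in $C^{3+\beta'}_{\mathrm{loc}}(\overline{\mathcal{R}})$ for any $\beta'<\beta$ to some limit $(\eta,\zeta)\in C^{3+\beta}_{\mathrm{b}}(\overline{\mathcal{R}})$; passing to the limit in \eqref{fullproblem} shows $(\eta,\zeta)$ solves \eqref{harmonic eta}--\eqref{bottom zeta}, and the second bound in \eqref{uniform bounds} is inherited, so the no-stagnation condition \eqref{no stagnation on free surface} holds for the limit. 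It remains to show $(\eta,\zeta)$ satisfies the asymptotic conditions \eqref{tilde regularity}, i.e. $w=(\eta-y,\zeta-(1-\gamma)y)\in C_0^2(\overline{\mathcal{R}})$, and then to promote $C^{3+\beta}_{\mathrm{loc}}$ convergence to $C^{3+\beta}_{\mathrm{b}}$ convergence. For the first part I would argue that, because each $\eta_n$ is monotone in $x$ for $x\ge 0$ and even in $x$, and the family is uniformly bounded, the translates $\eta_n(\cdot+x_n,\cdot)$ along any sequence $x_n\to+\infty$ subconverge in $C^{3+\beta'}_{\mathrm{loc}}$ to a solution of \eqref{harmonic eta}--\eqref{regularity} which is monotone (hence $x$-independent by the monotone limit being also even-type at $\pm\infty$), i.e.\ to a trivial solution $(\hat\eta_\tr(\cdot;d_\pm),\hat\zeta_\tr(\cdot;d_\pm))$; the limit $(\eta,\zeta)$ itself then has one-sided limits of exactly this form as $x\to\pm\infty$, so either it is a bore, which Lemma~\ref{lem:nobores} forbids, or $d_\pm=1$ and $w\in C_0^2$. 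Here the monotonicity \eqref{monotonicity} is doing double duty: it forces the one-sided spatial limits to exist (a bounded monotone function has limits) and it ensures those limits are themselves solutions rather than merely subsequential limits along a sequence of shifts.

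Once the limit is known to be a genuine solitary-wave solution, I would upgrade the convergence to $C^{3+\beta}_{\mathrm{b}}$. Suppose not: then there are $\ep>0$ and points $x_n$ (necessarily $|x_n|\to\infty$, since on compact sets we already have convergence) with $\|(\eta_n-\eta,\zeta_n-\zeta)\|_{C^{3+\beta}}\ge\ep$ measured near $x_n$. Translating by $x_n$ and extracting a subsequence, both $(\eta_n(\cdot+x_n,\cdot),\zeta_n(\cdot+x_n,\cdot))$ and $(\eta(\cdot+x_n,\cdot),\zeta(\cdot+x_n,\cdot))$ converge in $C^{3+\beta'}_{\mathrm{loc}}$ to trivial solutions, and by the bore argument above both limits equal the same trivial solution $(\hat\eta_\tr(\cdot;1),\hat\zeta_\tr(\cdot;1))=(y,(1-\gamma)y)$; hence the difference tends to zero in $C^{3+\beta'}_{\mathrm{loc}}$ near the origin, contradicting the uniform lower bound $\ep$ for the lower-order norms, and a standard interpolation between the $C^{3+\beta'}_{\mathrm{loc}}$ convergence and the uniform $C^{3+\beta}$ bound handles the top-order H\"older seminorm as well.

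The main obstacle is the middle step: rigorously establishing that the spatial limits of a monotone, uniformly bounded solution must be trivial solutions (and hence that only bores can obstruct compactness). One must check that the monotone $x$-translate limit is genuinely $x$-independent --- a limit of functions monotone in $x$ need only be monotone, not constant --- which requires an extra input, typically that the limit is a solution of the same elliptic system on all of $\mathbb{R}^2_+$ that is bounded and monotone and decays at $y=0$ appropriately, forcing it via a Liouville-type or strong-maximum-principle argument (applied to $\partial_x\eta$, which is harmonic, nonpositive, and vanishes on $B$) to be either strictly monotone everywhere or identically $x$-independent; ruling out the strictly-monotone-but-non-laminar alternative is where one invokes that such a function would have to connect two distinct trivial states and thus be a bore. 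Care is also needed because \eqref{uniform bounds} only gives a uniform bound on $(\eta_n,\zeta_n)$ themselves, so one should phrase everything in terms of $w_n$ to land in $C_0^2$ and legitimately apply the bore classification.
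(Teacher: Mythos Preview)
Your overall strategy---Arzel\`a--Ascoli for local convergence, then translation plus Lemma~\ref{lem:nobores} to rule out loss of mass at infinity---matches the paper's. The paper phrases the bore step as establishing the \emph{uniform} decay
\[
  \lim_{x\to\infty}\sup_n\sup_y\bigl|(\eta_n,\zeta_n)(x,y)-\bigl(y,(1-\gamma)y\bigr)\bigr|=0
\]
directly (by contradiction: translate the sequence by the bad points $x_n$, extract a monotone limit on all of $\mathbb R$, and show it would be a bore), whereas you first pass to the limit and then analyze its asymptotics. Your route can be made to work, though the paragraph mixing translates of $\eta_n$ with one-sided limits of the single function $\eta$ needs untangling.

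There is, however, a genuine gap in your upgrade to $C^{3+\beta}_{\mathrm b}$. Interpolation between $C^{3+\beta'}_{\mathrm{loc}}$ convergence (for $\beta'<\beta$) and a uniform $C^{3+\beta}$ bound yields convergence in $C^{3+\beta''}_{\mathrm b}$ for every $\beta''<\beta$, but \emph{not} in $C^{3+\beta}_{\mathrm b}$ itself: think of $n^{-(3+\beta)}\sin(nx)$, which is bounded in $C^{3+\beta}$, tends to zero in every weaker H\"older norm, yet does not converge in $C^{3+\beta}$. Since the compactness lemma is used to verify that the global curve stays in $\mathcal X\subset C^{3+\beta}_{\mathrm b}$, this loss of the endpoint exponent matters.

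The paper closes this gap by exploiting that the differences $v_n^{(1)}=\eta_n-\eta$ and $v_n^{(2)}=\zeta_n-\zeta$ themselves solve a \emph{linear} elliptic system on $\mathcal R$ with boundary conditions on $\Gamma$ whose coefficients are uniformly bounded in $C^{2+\beta}$ and whose Lopatinskii constant is bounded below thanks to the second hypothesis in \eqref{uniform bounds}. The Schauder estimate for such systems (Lemma~\ref{lem: schauder estimates}) then gives
\[
  \|(v_n^{(1)},v_n^{(2)})\|_{C^{3+\beta}(\mathcal R)}
  \le C\bigl(\|f_n\|_{C^{2+\beta}(\Gamma)}+\|(v_n^{(1)},v_n^{(2)})\|_{L^\infty(\mathcal R)}\bigr),
\]
where the forcing $f_n$ is built from $\alpha_n-\alpha$ and hence tends to zero in $C^{2+\beta}$, while the $L^\infty$ term tends to zero by the uniform-decay step. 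This recovers the full $C^{3+\beta}$ convergence without any loss of exponent. Your argument is missing exactly this use of the linear Schauder theory for the elliptic system; soft interpolation cannot substitute for it.
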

\begin{proof}
	Note that we only give monotonicity assumptions on $\eta$, since from \eqref{kinematic}, we see that \eqref{monotonicity} and a maximum principle argument show that $-\gamma\partial_x\zeta_n\leq0\text{ for }x\geq0$. Moreover, we can assume without loss of generality that $\alpha_n\to\alpha\in\mathbb{R}$. We argue as in \cite[Lemma 6.3]{strat}. 
	
	We begin by assuming that
	\begin{equation}\label{uniform decay}
		\lim_{x\to\infty}\sup_n\sup_y\big|(\eta_n,\zeta_n)(x,y)-\big(y,(1-\gamma)y \big)\big|=0
	\end{equation}
	holds. Then, using Arzelà--Ascoli, we can extract subsequences so that $(\eta_n,\zeta_n)\to(\eta,\zeta)$ in $C_\text{loc}^{3}(\overline{\mathcal{R}})$ and by \eqref{uniform decay} also in $L^\infty(\overline{\mathcal{R}})$, for some $(\eta,\zeta)$ which solve \eqref{fullproblem}. Therefore, the differences 
	\begin{equation*}\label{vn}
	v_n^{(1)}=\eta_n-\eta\qquad\text{and}\qquad v_n^{(2)}=\zeta_n-\zeta
	\end{equation*}
	satisfy
	\begin{equation}\label{sequence no bore}
		\|(v_n^{(1)},v_n^{(2)})\|_{L^{\infty}(\mathcal{R})}\to0\qquad\text{as }n\to\infty,
	\end{equation}
	after extraction. It remains to show that $(\eta_n,\zeta_n)\to(\eta,\zeta)$ in $C_\be^{3+\beta}(\overline{\mathcal{R}})$. 
	
	On the surface $\Gamma$, $v_n^{(1)}$ and $v_n^{(2)}$ solve the linear boundary conditions 
	\begin{equation}\label{vn problem}
	\begin{aligned}
	a_{22}\partial_yv_n^{(2)}-a_{11}\partial_xv_n^{(1)}-a_{12}\partial_yv_n^{(1)}-b_{1}v_n^{(1)}&=f_n,\\
	c_1v_n^{(1)}+v_n^{(2)}&=0,
	\end{aligned}
	\end{equation}
	where the coefficients and $f_n$ are given by
	\begin{equation*}\label{vn coefficients}
	\begin{aligned}
	a_{11}&=\big(2\alpha_n(\eta_{n}-1)+2\alpha(\eta-1)-2\big)\eta_x,\\
	a_{12}&=\big(2\alpha_n(\eta_{n}-1)+2\alpha(\eta-1)-2\big)\eta_y-a_{22}c_1,\\	
	a_{22}&=\zeta_{ny}+\gamma \eta_n\eta_{ny}+\zeta_y+\gamma \eta\eta_y,\\
	b_1&=\tfrac\gamma 2 (\eta_{ny}+\eta_y+\eta+\eta_n)a_{22},\\
	c_1&=\tfrac{1}{2}\gamma( \eta+\eta_n),\\
	f_n&=\big((\eta_{nx}-\eta_x)^2+(\eta_{ny}-\eta_y)^2\big)\big(2\alpha_n(\eta_n-1)-1 \big)\\&\qquad+2\big((\eta_x\eta_{nx}+\eta_y\eta_{ny}) \big)\big(\alpha_n(\eta_n-1)-\alpha(\eta-1)\big)+(\eta_{ny}-\eta_y)(\eta_n+\eta)a_{22}.
	\end{aligned}
	\end{equation*}
	From \eqref{uniform bounds}, all coefficients and $f_n$ are uniformly bounded in $C_\be^{2+\beta}(\Gamma)$. Applying Lemma~\ref{lem: schauder estimates}, \eqref{vn problem} implies a Schauder estimate provided the uniform bound
	\begin{equation*}\label{vn boundary bounds}
		\big(2\alpha_n(\eta_{n}-1)+2\alpha(\eta-1)-2\big)^2(\eta_x^2+\eta_y^2)\geq\delta,
	\end{equation*}
	is satisfied on $\Gamma$, for some $\delta>0$. By assumption \eqref{uniform bounds}, this inequality holds. From \eqref{tilde regularity} and \eqref{sequence no bore}, we therefore get
	\begin{equation*}
	\|(v_n^{(1)},v_n^{(2)})\|_{C^{3+\beta}(\mathcal{R})}\leq C\big(\|f_n\|_{C^{2+\beta}(\Gamma)}+ 	\|(v_n^{(1)},v_n^{(2)})\|_{L^{\infty}(\mathcal{R})}\big)\to0,
	\end{equation*}
	as $n\to\infty$. Thus, $(\eta_n,\zeta_n)\to(\eta,\zeta)$ in $C_{\mathrm{b}}^{3+\beta}(\overline{\mathcal{R}})$.
	
	Let us now assume that \eqref{uniform decay} does not hold. Then we can find a sequence $\{(x_n,y_n)\}\subset\mathbb{R}^2$ with $x_n\to\infty$ and $\ep>0$ so that 
	\begin{equation*}
		\big|(\eta_n,\zeta_n)(x_n,y_n)-\big(y_n,(1-\gamma)y_n \big)\big|\geq\ep,
	\end{equation*}
	for all $n$. Using a translation argument and the monotonicity assumption \eqref{monotonicity} exactly as in Lemma 6.3 in \cite{strat}, the sequence of solutions $(\zeta_n,\eta_n)$ must converge to a bore solution of \eqref{harmonic eta}--\eqref{regularity} as $n\to\infty$. This is a contradiction to Lemma~\ref{lem:nobores}, and therefore \eqref{uniform decay} must hold.
\end{proof}

\subsection{Bounds on the Froude number}\label{subsect: Froude number}
The goal of this subsection is to provide a lower bound on the Froude number. This result will rule out the alternative that solutions along the global continuum will reach the critical Froude number value \eqref{alpha crit} at which our linearized operators cease to be Fredholm. Due to the possibility of having internal stagnation points and overhanging wave profiles, we cannot work with the Dubreil-Jacotin formulation and therefore the approach used in \cite{froude} cannot be applied here. Instead we develop a new argument involving the function $\Phi$ defined in \eqref{flowforce flux function} below, related to the flow force flux function derived in \cite{klw:subcritical}.  We begin by showing that this function satisfies an elliptic equation. Compared to the function in \cite{klw:subcritical}, ours is harmonic but has more complicated boundary conditions. Specifically, we obtain an equation on the surface $\Gamma$ which is related to the Babenko formulation of our problem \eqref{fullproblem}. We then obtain the desired bounds by using this boundary condition to establish the integral identity \eqref{froudenumberboundintegral}. 
   
We begin by defining our variant of the flow force flux function:
	\begin{equation}\label{flowforce flux function}
\Phi(x,y):=\int_{0}^{y}\bigg(\frac{\eta_y(\zeta_y^2-\zeta_x^2)+2\eta_x\zeta_x\zeta_y}{\eta_x^2+\eta_y^2}
    +(1-\gamma^2)\eta_y+2(\gamma-1)\bigg)\,dy.
	\end{equation}
From \eqref{tilde regularity}, we clearly see that $\Phi\to0$ as $|x|\to\infty$. Notice that the first term in the integrand is the same as the integrand in \eqref{flowforce}, except that here the upper limit of the integral is free rather than being fixed at $y=1$.

\begin{proposition}\label{prop: phi}
	Suppose that $(\eta,\zeta,\alpha)$ solve \eqref{fullproblem}. Then the function $\Phi$ defined in \eqref{flowforce flux function} solves the Dirichlet problem
	\begin{subequations}\label{flowforceproblem}
	\begin{align}
	\label{Phi harmonic}
	\Delta\Phi&=0&\qquad&\textup{in }\mathcal{R},\\
	\label{Phi surface identity}
	\Phi&=(\alpha+\gamma^2)(\eta-1)^2+\tfrac{1}{3}\gamma^2(\eta-1)^3&\qquad&\textup{on }\Gamma,\\
	\label{Phi bottom}
	\Phi&=0&\qquad&\textup{on }B.
	\end{align}
	\end{subequations}
\end{proposition}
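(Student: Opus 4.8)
\emph{Proof sketch.} The bottom condition \eqref{Phi bottom} is immediate: when $y=0$ the integral in \eqref{flowforce flux function} is over the empty interval. For the other two statements the plan is to recognize the integrand in \eqref{flowforce flux function} as the real part of a holomorphic function. By the computation in the proof of Lemma~\ref{lem: flowforce indep of x}, the first term of the integrand is $\operatorname{Re}$ of \eqref{holomorphic}, i.e.\ of $(\zeta_y+i\zeta_x)^2/(\eta_y+i\eta_x)$, which is holomorphic on the simply connected strip $\mathcal{R}$ since $\eta_y+i\eta_x$ is non-vanishing by \eqref{no stagnation on free surface}; and the remaining terms $(1-\gamma^2)\eta_y+2(\gamma-1)$ are $\operatorname{Re}$ of the holomorphic function $(1-\gamma^2)(\eta_y+i\eta_x)+2(\gamma-1)$. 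Thus the integrand equals $\operatorname{Re} h$ for a holomorphic $h$ on $\mathcal{R}$ with
\begin{equation*}
\operatorname{Im} h=\frac{2\eta_y\zeta_x\zeta_y-\eta_x(\zeta_y^2-\zeta_x^2)}{\eta_x^2+\eta_y^2}+(1-\gamma^2)\eta_x .
\end{equation*}
Choosing a holomorphic antiderivative $H=P+iQ$ of $h$ (possible since $\mathcal{R}$ is simply connected), the Cauchy--Riemann equations give $\operatorname{Re} h=Q_y$ and $\operatorname{Im} h=Q_x$, so $\Phi(x,y)=\int_0^y Q_y(x,t)\,dt=Q(x,y)-Q(x,0)$.

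To obtain \eqref{Phi harmonic}, I would argue that $Q(\,\cdot\,,0)$ is constant, so that $\Phi$ and the harmonic function $Q$ differ by a constant. On $B$ we have $\eta\equiv0$ and $\zeta\equiv0$, hence $\eta_x\equiv0$ and $\zeta_x\equiv0$ there, and so every term of $\operatorname{Im} h$ vanishes on $B$; since $\operatorname{Im} h=Q_x$, this gives $Q_x(\,\cdot\,,0)\equiv0$. (Alternatively one can differentiate \eqref{flowforce flux function} directly: harmonicity of the integrand $\operatorname{Re} h$ yields $\Delta\Phi=(\operatorname{Re} h)_y\big|_{y=0}=-(\operatorname{Im} h)_x\big|_{y=0}=0$.)

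For the surface identity \eqref{Phi surface identity}, I would differentiate $\Phi(x,1)=Q(x,1)-Q(x,0)$ in $x$, obtaining $\partial_x\Phi(x,1)=Q_x(x,1)=\operatorname{Im} h\big|_\Gamma$. On $\Gamma$ the kinematic condition \eqref{kinematic} gives $\zeta_x=-\gamma\eta\eta_x$; inserting this together with the dynamic condition \eqref{dynamic} reduces the first term of $\operatorname{Im} h$ on $\Gamma$ to a polynomial multiple of $\eta_x$ exactly as recorded in \eqref{holomorphic surface stuff}, and after adding the remaining $(1-\gamma^2)\eta_x$ and factoring one finds that $\operatorname{Im} h\big|_\Gamma$ equals $\partial_x$ of the right-hand side of \eqref{Phi surface identity}. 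Hence $\Phi(x,1)$ and that right-hand side differ by a constant, and letting $|x|\to\infty$ — where $\Phi\to0$ (as noted before the statement) and $\eta\to1$ — shows the constant is zero.

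The one step that requires genuine care is the harmonicity \eqref{Phi harmonic}: being a $y$-antiderivative of a harmonic function does not make $\Phi$ harmonic by itself, and the point is that the boundary contribution at $y=0$ is independent of $x$, which rests on $\eta$ and $\zeta$ — and hence $\operatorname{Im} h$ — being constant along the flat bed $B$. Everything else is bookkeeping on top of the holomorphic structure and the surface computation \eqref{holomorphic surface stuff} already established in Section~\ref{sect: Flow Force}.
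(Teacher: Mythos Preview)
Your argument is correct. For harmonicity you are, if anything, more careful than the paper, which simply asserts that \eqref{Phi harmonic} follows from the integrand being the real part of a holomorphic function; your observation that one must check $\operatorname{Im} h$ vanishes on $B$ (so that the antiderivative's boundary value $Q(\,\cdot\,,0)$ is constant) fills in the step the paper leaves implicit.

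The only genuine difference is in how \eqref{Phi surface identity} is obtained. The paper does not differentiate and re-integrate; instead it uses the invariance of the flow force $S$ from Lemma~\ref{lem: flowforce indep of x} directly. Writing $\Phi(x,1)$ as $2S$ plus the boundary term in \eqref{flowforce} plus the explicit integrals of $(1-\gamma^2)\eta_y$ and $2(\gamma-1)$, and then subtracting the same expression evaluated at $|x|\to\infty$ (where $S$ has the same value, $\eta\to1$, and $\Phi\to0$), one obtains the right-hand side of \eqref{Phi surface identity} by pure algebra. Your route---differentiate in $x$, invoke \eqref{holomorphic surface stuff}, then integrate and fix the constant at infinity---is equivalent, since \eqref{holomorphic surface stuff} is precisely the identity used to prove $S_x=0$ in the first place. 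The paper's version is a one-line consequence of Lemma~\ref{lem: flowforce indep of x}; yours is self-contained but reuses the same key surface computation.
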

\begin{proof}
  Clearly, from the definition \eqref{flowforce flux function} of $\Phi$, \eqref{Phi bottom} holds. The fact that $\Phi$ is harmonic follows from the fact that the first term in the integrand is the real part of the holomorphic function \eqref{holomorphic}. Setting \eqref{flowforce} equal to its limit as $|x|\to\infty$, subtracting it from \eqref{flowforce} and pulling the zeroth and first order terms into the integral yields \eqref{Phi surface identity}. 
\end{proof}

Using $\Phi$ and Proposition~\ref{prop: phi}, we are now able to establish a useful integral identity.

\begin{proposition}\label{prop: integral identity}
	Any solution $(\eta,\zeta,\alpha)$ to \eqref{fullproblem} satisfies the integral identity
	\begin{equation}\label{froudenumberboundintegral}
    \begin{aligned}
      (1-\alpha-\gamma)\int_{-M}^{M}w_1\,dx
      =\alpha\int_{-M}^{M}w_1w_{1y}\,\,dx
      +\frac{\alpha+\gamma^2}{2}\int_{-M}^{M}w_1^2\,dx
      +\frac{\gamma^2}{6}\int_{-M}^{M}w_1^3\,dx
      +o(1)
    \end{aligned}
  \end{equation}
	as $M\to\infty$, where the integrals are over the surface $y=1$ and $w_1(x,y)=\eta(x,y)-y$ was defined in \eqref{tilde}.
\end{proposition}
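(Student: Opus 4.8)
The plan is to derive \eqref{froudenumberboundintegral} by integrating the Dirichlet problem \eqref{flowforceproblem} for $\Phi$ against a suitable test function and exploiting harmonicity. The natural choice of test function is the linear function $y$ (or equivalently the trivial solution profile), so that the left-hand side of Green's identity only sees boundary data. Concretely, I would apply Green's identity to $\Phi$ and $y$ on the truncated rectangle $\mathcal{R}_M:=\{(x,y): |x|<M,\ 0<y<1\}$. Since both $\Phi$ and $y$ are harmonic, the area integral $\int_{\mathcal{R}_M}(\Phi\,\Delta y - y\,\Delta\Phi)\,dx\,dy$ vanishes identically, so Green's identity reduces to a sum of boundary integrals over the four sides of $\partial\mathcal{R}_M$.

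Next I would analyze each of these boundary terms. On the bottom $B$ we have $\Phi=0$ by \eqref{Phi bottom} and $y=0$, so that contribution vanishes. On the two vertical sides $x=\pm M$, the integrands involve $\Phi$, $\Phi_x$, and $y$; because $w\in C_0^2(\overline{\mathcal{R}})$ we have $\Phi\to 0$ and $\Phi_x\to 0$ as $|x|\to\infty$ (this is exactly the decay noted just after the definition \eqref{flowforce flux function}), so these side contributions are $o(1)$ as $M\to\infty$. The surviving contribution comes from the top $\Gamma$ at $y=1$: there $y=1$ and $y_\nu=1$ (outward normal derivative), and $\Phi$ is given explicitly by \eqref{Phi surface identity}. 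Thus Green's identity collapses to an identity relating $\int_{-M}^{M}\Phi_y\big|_{y=1}\,dx$ and $\int_{-M}^{M}\Phi\big|_{y=1}\,dx$ up to $o(1)$. It remains to evaluate $\Phi_y$ on $\Gamma$: differentiating the definition \eqref{flowforce flux function} with respect to $y$ and then restricting to $y=1$, and using the dynamic and kinematic boundary conditions \eqref{dynamic}, \eqref{kinematic} together with $\eta=1+w_1$ on $\Gamma$, one can rewrite $\Phi_y\big|_{\Gamma}$ in terms of $w_1$ and $w_{1y}$. The term $\tfrac{1}{2}\big(\eta_y(\zeta_y^2-\zeta_x^2)+2\eta_x\zeta_x\zeta_y\big)/(\eta_x^2+\eta_y^2) + (1-\gamma^2)\eta_y + 2(\gamma-1)$ evaluated on $\Gamma$ should, after using \eqref{holomorphic surface stuff}-type algebra (replace $\zeta_x$ via \eqref{kinematic}, complete the square in $\zeta_y$, eliminate $\zeta$ via \eqref{dynamic}), reduce to something like $(1-\alpha-\gamma) - \alpha w_1 + (\text{lower order in }w_1)$ — and matching this against the $y$-derivative of the right-hand side of \eqref{Phi surface identity} is what produces all four terms in \eqref{froudenumberboundintegral}. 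Substituting $\Phi\big|_\Gamma = (\alpha+\gamma^2)w_1^2 + \tfrac13\gamma^2 w_1^3$ directly yields the quadratic and cubic terms, while the $\alpha\int w_1 w_{1y}$ and $(1-\alpha-\gamma)\int w_1$ terms come from the $\Phi_y$ computation.

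The main obstacle I anticipate is the bookkeeping in the computation of $\Phi_y$ on $\Gamma$: unlike the interior flow-force computation (which used holomorphicity to get a clean $x$-derivative in \eqref{has a rhs}), here we need the vertical derivative of the same integrand restricted to the surface, and this requires carefully re-deriving a surface identity in the spirit of \eqref{holomorphic surface stuff} but one order lower, keeping track of the $(1-\gamma^2)\eta_y+2(\gamma-1)$ correction terms and the sign conventions. A secondary point requiring care is justifying that the vertical-side integrals are genuinely $o(1)$ and that differentiation under the integral sign / integration by parts in $x$ on $\Gamma$ is legitimate — both follow from \eqref{regularity} and \eqref{tilde regularity}, but should be stated. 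Everything else — harmonicity of $\Phi$, vanishing on $B$, the explicit surface value — is already supplied by Proposition~\ref{prop: phi}, so the proof is essentially an application of Green's identity plus one tedious but routine boundary-algebra lemma.
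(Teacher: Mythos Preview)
Your approach is essentially the paper's: Green's identity between $\Phi$ and $y$ on the truncated rectangle, with the side contributions vanishing as $M\to\infty$ and the surviving top contribution computed via \eqref{Phi surface identity} and an expression for $\Phi_y\big|_\Gamma$. One point your sketch slightly misjudges is that the boundary conditions do \emph{not} let you rewrite $\Phi_y\big|_\Gamma$ pointwise in terms of $w_1,w_{1y}$ alone: after substituting $\zeta_x=-\gamma\eta\eta_x$ and using the dynamic condition, a linear $\zeta_y$ term survives (see the paper's \eqref{holomorphic surface again}). The paper eliminates it not pointwise but by the harmonic Gauss--Green swap $\int_\Gamma \eta\zeta_y\,dx=\int_\Gamma \eta_y\zeta\,dx+o(1)$ (valid since $\eta,\zeta$ are harmonic and vanish on $B$), after which the kinematic condition gives $\zeta$ algebraically in $\eta$; a second such swap $\int_\Gamma w_{1y}\,dx=\int_\Gamma w_1\,dx+o(1)$ is also used. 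With this extra ingredient your outline goes through and matches the paper's argument exactly.
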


\begin{proof}

Multiplying \eqref{Phi harmonic} by $y$ and integrating by parts twice, we get, for any $M>0$, 
\begin{equation*}
\begin{aligned}
0&=-\int_{-M}^{M}\int_{0}^{1}\Delta\Phi\cdot y\,dydx\\
&=\int_{-M}^{M}\Phi\,dx\bigg|_{y=0}^{y=1}-\int_{-M}^{M}\Phi_{y} \cdot y\,dx\bigg|_{y=0}^{y=1}-\int_{0}^1\Phi_x\cdot y\,dy\bigg|_{x=-M}^{x=M},
\end{aligned}
\end{equation*}
which yields
\begin{equation}\label{full integral Phi-Phi_Y}
\int_{-M}^{M}(\Phi-\Phi_{y} y)\,dx\bigg|_{y=0}^{y=1}=\int_{0}^1\Phi_x\cdot y\,dy\bigg|_{x=-M}^{x=M}=o(1)\text{ as }M\to\infty.
\end{equation}
The last equality is true since $\Phi_x\rightarrow0$ as $x\rightarrow\infty$ by \eqref{tilde regularity}. Let us now look more closely at the left-hand side. 

  Notice that the second term in the first integrand of \eqref{full integral Phi-Phi_Y}, $\Phi_y\cdot y$, vanishes on $B$. On the other hand, arguing exactly as above \eqref{holomorphic surface stuff}, we find
  \begin{align}
    \label{holomorphic surface again}
    \begin{aligned}
      \Phi_y 
      &= \frac{\eta_y(\zeta_y^2-\zeta_x^2)+2\eta_x\zeta_x\zeta_y}{\eta_x^2+\eta_y^2}
      + (1-\gamma^2)\eta_y+2(\gamma-1)\\
      &= 
      -2\gamma\eta\zeta_y
      +(2+2\alpha-\gamma^2)\eta_y
      -2\alpha \eta\eta_y
      -2\gamma^2 \eta^2 \eta_y
      +2(\gamma-1)
    \end{aligned}
    \quad \text{ on } \Gamma.
  \end{align}
  Since $\eta_x$ and $\zeta_x$ vanish at infinity, the Gauss--Green theorem and the kinematic boundary condition \eqref{kinematic} give
  \begin{align}
    \label{gauss green}
    \begin{aligned}
      \int_{-M}^{M}\eta\zeta_y\,dx
      &=\int_{-M}^{M}\eta_y \zeta\,dx+o(1)\\
      &=\int_{-M}^{M}\eta_y (-\tfrac 12 + \gamma + 1 - \tfrac 12 \gamma \eta^2)\,dx+o(1)
      \quad\text{as }M\to\infty.
    \end{aligned}
  \end{align}
  Combining \eqref{holomorphic surface again} and \eqref{gauss green}, and rewriting in terms of the unknown $w_1=\eta-y$ defined in \eqref{tilde}, we find
  \begin{equation*}
    \label{integral Phi_y 1}
    \int_{-M}^M \Phi_y \cdot y\, dx \bigg|^{y=1}_{y=0}
    = 
    \int_{-M}^M 
    2\Big(
    (1-\gamma) w_{1y}
    - \alpha w_1 - \alpha w_1 w_{1y}
    \Big)
    \, dx \bigg|_{y=1}
    + o(1)
    \qquad 
    \text{ as } M \to \infty.
  \end{equation*}
  Applying the Gauss--Green theorem to $w_1$ and $y$, we also have
  \begin{equation*}
    \int_{-M}^{M}w_{1y}\,dx=\int_{-M}^{M}w_1\,dx+o(1)\quad\text{as }M\to\infty.
  \end{equation*}
  Using \eqref{Phi surface identity} therefore gives
  \begin{equation*}
    \int_{-M}^{M}(\Phi-\Phi_{y}\cdot y)\,dx\bigg|_{y=0}^{y=1}
      =2\int_{-M}^{M}\Big((\gamma+\alpha-1)w_1+\frac{\alpha+\gamma^2}2 w_1^2+\frac{\gamma^2}{6}w_1^3+\alpha w_1w_{1y}\Big)\,dx+o(1),
  \end{equation*}
as $M\to\infty$, so that \eqref{full integral Phi-Phi_Y} rearranges to the desired identity \eqref{froudenumberboundintegral}.
\end{proof}

We now use the identity \eqref{froudenumberboundintegral} to give an upper bound on $\alpha$, proving Theorem~\ref{thm: dim froude}. We will see in Section~\ref{sect: Existence Results} that this upper bound is in fact sharp.
\begin{theorem}\label{thm: bound on alpha}
	Let $(\eta,\zeta,\alpha)$ solve \eqref{fullproblem} with $\alpha>0$. If $\eta\geq 1$ on the surface $\Gamma$, then either
	\begin{equation}\label{bound on alpha}
	\alpha<\alpha_\crit=1-\gamma
	\end{equation} 
	holds, or else the solution is trivial. Here $\alpha_\crit$ is defined as in \eqref{alpha crit}.
\end{theorem}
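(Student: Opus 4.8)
The plan is to work directly from the integral identity \eqref{froudenumberboundintegral} of Proposition~\ref{prop: integral identity}, exploiting the sign assumption $\eta \ge 1$ on $\Gamma$, i.e. $w_1 \ge 0$ there. First I would observe that for a solution of \eqref{fullproblem}, the function $w_1 = \eta - y$ is harmonic in $\mathcal R$, vanishes on $B$, decays at infinity by \eqref{tilde regularity}, and is nonnegative on $\Gamma$; so by the maximum principle $w_1 \ge 0$ throughout $\overline{\mathcal R}$, and unless $w_1 \equiv 0$ (the trivial case) we have $w_1 > 0$ in $\mathcal R$ and, by the Hopf lemma applied on $\Gamma$, $w_{1y} > 0$ at every point of $\Gamma$. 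Hence on the surface $y = 1$ we have both $w_1 \ge 0$ and $w_1 w_{1y} \ge 0$ pointwise, with strict inequality somewhere since the wave is nontrivial.

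Next I would take the limit $M \to \infty$ in \eqref{froudenumberboundintegral}. Since $w_1 \in C_0^2$, all the integrands $w_1$, $w_1^2$, $w_1^3$, $w_1 w_{1y}$ are nonnegative and, I would need to check, integrable over the line $y = 1$; assuming the requisite decay (which follows from $w \in C_0^2$ together with standard decay estimates for the harmonic function $w_1$, or can simply be added as part of the hypotheses being used from \eqref{fullproblem}), the $o(1)$ term drops and \eqref{froudenumberboundintegral} becomes
\begin{equation*}
  (1-\alpha-\gamma)\int_{\mathbb R} w_1 \, dx
  = \alpha \int_{\mathbb R} w_1 w_{1y}\,dx
  + \frac{\alpha + \gamma^2}{2}\int_{\mathbb R} w_1^2\,dx
  + \frac{\gamma^2}{6}\int_{\mathbb R} w_1^3\,dx,
\end{equation*}
all integrals over $y = 1$. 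Now the right-hand side is a sum of manifestly nonnegative terms, because $\alpha > 0$, $\alpha + \gamma^2 > 0$, $\gamma^2 \ge 0$, and $w_1, w_1 w_{1y}, w_1^2, w_1^3 \ge 0$; moreover it is strictly positive, since at least the term $\tfrac{\alpha+\gamma^2}{2}\int w_1^2$ is positive for a nontrivial wave (indeed $\int w_1 w_{1y} > 0$ as well). On the left, $\int_{\mathbb R} w_1\,dx > 0$ for a nontrivial wave of elevation. Therefore $1 - \alpha - \gamma > 0$, which is exactly \eqref{bound on alpha}: $\alpha < 1 - \gamma = \alpha_{\crit}$.

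The main obstacle, and the only genuinely non-routine point, is justifying the passage to the limit $M \to \infty$ — i.e. verifying that $w_1$ and its surface derivative decay fast enough along $y = 1$ for $\int_{\mathbb R} w_1\,dx$ and $\int_{\mathbb R} w_1 w_{1y}\,dx$ to converge. The class $C_0^2$ only gives uniform decay, not integrability, so one must invoke something more: either exponential decay of solitary waves (which should be available in this framework, or provable from the elliptic system since the linearization at the trivial solution is nondegenerate away from $\alpha_{\crit}$), or a more careful argument showing that the boundary terms in \eqref{full integral Phi-Phi_Y} already encode a finite limit. An alternative that sidesteps this is to keep $M$ finite: divide \eqref{froudenumberboundintegral} by $\int_{-M}^{M} w_1\,dx$ (positive for $M$ large) and note that the ratio of the right-hand side to this quantity is bounded below by a positive constant (coming from the $w_1^2$ term via, say, Cauchy–Schwarz or simply positivity) while the $o(1)/\int_{-M}^M w_1$ term tends to $0$; taking $M \to \infty$ again forces $1 - \alpha - \gamma \ge 0$, and a short additional argument rules out equality. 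I would present whichever of these is cleanest given the decay results already established in the paper.
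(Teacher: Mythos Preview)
Your application of the Hopf lemma on $\Gamma$ is incorrect, and this is a genuine gap. The Hopf lemma gives sign information on the normal derivative only at a boundary point where an extremum is achieved; it says nothing at generic points of $\Gamma$. In fact, at any point of $\Gamma$ where $w_1$ attains its infimum $0$ (if such a point exists), Hopf would yield the \emph{opposite} sign, $w_{1y}<0$, since the outward normal on $\Gamma$ is $+\partial_y$. More generally, for a harmonic function in the strip vanishing on $B$ and nonnegative on $\Gamma$, there is no reason for $w_{1y}|_\Gamma$ to be pointwise nonnegative: the Dirichlet-to-Neumann map on the strip does not preserve positivity. So the pointwise inequality $w_1 w_{1y}\ge 0$ on $\Gamma$ is unjustified, and this also undermines your finite-$M$ alternative, since without a sign on $\int_{-M}^{M} w_1 w_{1y}\,dx$ you cannot conclude that the right-hand side of \eqref{froudenumberboundintegral} is positive.

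The paper closes this gap not by a pointwise argument but by an energy identity. Since $w_1$ is harmonic and vanishes on $B$,
\[
  0<\int_{-M}^{M}\!\!\int_{0}^{1}|\nabla w_1|^2\,dy\,dx
  = \int_{-M}^{M}\!\!\int_{0}^{1}\nabla\cdot(w_1\nabla w_1)\,dy\,dx
  = \int_{-M}^{M} w_1 w_{1y}\,dx\bigg|_{y=1}
  + \int_{0}^{1} w_1 w_{1x}\,dy\bigg|_{x=-M}^{x=M},
\]
and the lateral term is $o(1)$ by \eqref{tilde regularity}. Hence $\int_{-M}^{M} w_1 w_{1y}\,dx>0$ for all sufficiently large $M$, and since the remaining integrals on the right of \eqref{froudenumberboundintegral} are manifestly positive for large $M$, one reads off $1-\alpha-\gamma>0$ directly at finite $M$. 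This also sidesteps the integrability issue you flagged: no limit $M\to\infty$ is ever taken.
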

\begin{proof}
	Suppose that $w_1\not\equiv0$. It is easy to check that as a consequence $w_1$ must be strictly positive somewhere on the surface $\Gamma$. 
	
	We claim that all the integrals in \eqref{froudenumberboundintegral} are positive for $M$ sufficiently large. We only need to check the first integral on the right-hand side, with the integrand $w_1w_{1y}$, since the claim holds for the others by assumption. This is easily done by noticing that 
	\begin{equation*}
	0<\int_{-M}^M\int_{0}^1|\nabla w_1|^2\,dydx=\int_{-M}^M\int_{0}^1\nabla\cdot(w_1\nabla w_1)\,dydx=\int_{-M}^Mw_1w_{1y}\,dx\bigg|_{y=1}+\int_{0}^1w_1w_{1x}\,dy\bigg|_{-M}^{M}.
	\end{equation*}  
	Since the second term on the right-hand side goes to zero as $|x|\to\pm\infty$, the claim has been proven. 
	
	Since the coefficients of the first three terms on the right-hand side of \eqref{froudenumberboundintegral} are strictly positive, \eqref{bound on alpha} must hold. 
\end{proof}

	\section{Nodal analysis}\label{sect: Nodal Analysis}
In this section, we will prove that the solutions to \eqref{fullproblem} satisfy a monotonicity property which is traditionally referred to as \emph{nodal property}. More specifically, we wish to show that the vertical component $\eta$ of the parametrization of the wave profile $\mathcal{S}$ as defined in \eqref{surface parametrization} is strictly decreasing on either side of the wave crest, that is, 
\begin{equation}\label{nodal eta}
	\eta_x<0\quad\text{in }(\mathcal{R}\cup\Gamma)\cap\{x>0\}\qquad\text{and}\qquad\eta_x>0\quad\text{in }(\mathcal{R}\cup\Gamma)\cap\{x<0\}.
\end{equation}
As we will see in Section~\ref{subsect: small-amplitude}, \eqref{nodal eta} is motivated by the structure of the small-amplitude solutions in a neighborhood of the bifurcation point. In the periodic case, the main purpose of this property is to rule out a loop alternative for the global curve of solutions. For solitary waves, monotonicity has an even bigger importance in the sense that its validity, combined with the nonexistence of bores, ensures compactness in the unbounded domain. We remark that the monotonicity property in \eqref{nodal eta} does not rule out the existence of waves with overhanging profiles; see the left hand side of Figure~\ref{fig:nodal}. 

\begin{figure}
	\centering
	\includegraphics[scale=1.1]{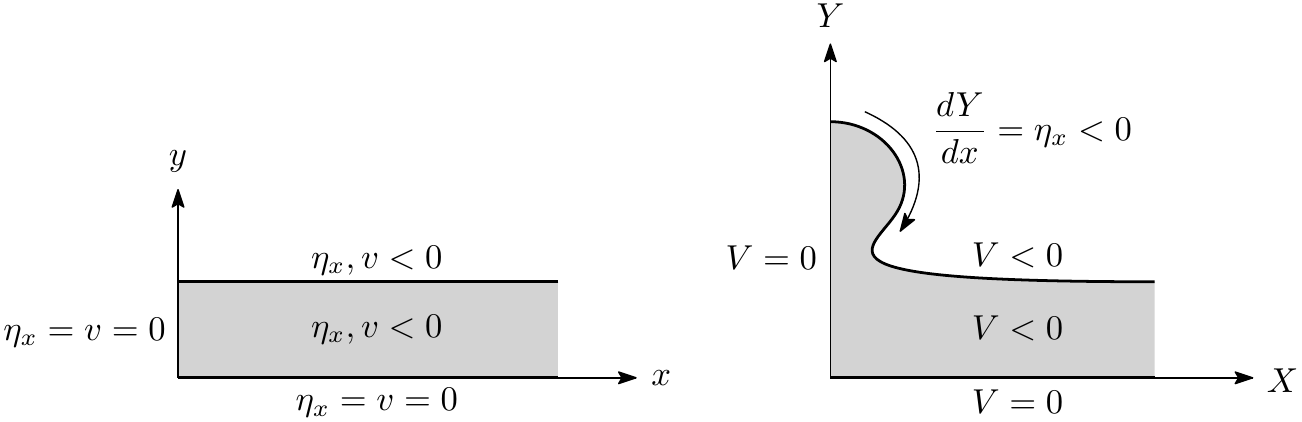}
	\caption{The nodal properties \eqref{nodal eta} and \eqref{v<0} in the right half of the domain. Note that the monotonicity property $\eta_x<0$ does not prevent the wave from having an overhanging profile in the physical variables.
		\label{fig:nodal}
	}
\end{figure}

The aim is to prove that monotonicity is conserved along the global curve of solutions $\mathscr{C}$ by showing that \eqref{nodal eta} defines both a relatively open and closed subset of the set of nontrivial solutions of \eqref{fullproblem}. For the closed condition, we closely follow the idea developed in \cite{csv:critical} in which the main thrust of the argument concerns the vertical component $v$ of the velocity field, as defined in \eqref{conformal velocity}. Rather than working with $\eta,\zeta$ and $v$, we find it more convenient to work with $u,v$ and $\eta$ (see Lemma~\ref{lem: v problem} and \eqref{v problem} below). This streamlines the presentation somewhat and is especially useful when considering the behavior of the problem at infinity in Lemma~\ref{lem: smallstrict}.

The open condition is more delicate. For periodic waves, such as the ones studied in \cite{cs:exact} and \cite{csv:critical}, one can show that the set consisting of the nodal property \eqref{nodal eta} combined with certain additional monotonicity properties is open in $C_\be^3(\overline{\mathcal{R}})$. Due to the unboundedness of the domain, this is not true when working with solitary waves. To overcome this difficulty we follow the approach in \cite{paper} and \cite{strat} and split the domain into a bounded rectangle and two semi-infinite strips. The open condition in the rectangle then follows as in the periodic setting, and it remains to closely study the qualitative behavior of solutions at infinity in the semi-unbounded strip and glue the domain pieces back together. 

Let us now precisely state the nodal property in terms of the vertical component $v$ of the velocity field, as defined in \eqref{conformal velocity}. Since we are assuming in \eqref{symmetry} that our solutions to \eqref{fullproblem} are symmetric, it suffices to work in the positive half of the domain $\mathcal{R}$. We denote 
\begin{equation*}
\mathcal{R}^+:=\{(x,y)\in\mathcal{R}: x>0\}\qquad\text{and}\qquad\Gamma^+:=\{(x,y)\in\Gamma:x>0 \}.
\end{equation*} 
The monotonicity property in terms of $v$ is
\begin{equation}\label{v<0}
	v<0\qquad\text{in }\Gamma^+\cup\mathcal{R}^+.
\end{equation}
This next result shows that \eqref{v<0} along with its analogue on the negative half of the domain implies \eqref{nodal eta}; see Figure~\ref{fig:nodal}.

\begin{lemma}\label{lem: nodal eta}
	Suppose $(\eta,\zeta,\alpha)$ solves \eqref{fullproblem} and $v$ is defined by \eqref{conformal velocity}. If \eqref{v<0} holds, then $\eta_x<0$ in $\Gamma^+\cup\mathcal{R}^+$.
\end{lemma}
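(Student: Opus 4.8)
The plan is to relate the vertical velocity $v$ to $\eta_x$ using the structure of the conformal map, and then feed back the sign information. Recall from \eqref{conformal velocity} that
\begin{equation*}
  v = \frac{\eta_x\zeta_y - \eta_y\zeta_x}{\eta_x^2+\eta_y^2}
  \qquad \text{in } \mathcal{R}.
\end{equation*}
On the surface $\Gamma$, the kinematic boundary condition \eqref{kinematic} gives $\zeta_x = -\gamma\eta\eta_x$, and so on $\Gamma$ we may substitute to express $v$ (up to a positive factor $|\nabla\eta|^{-2}$) in terms of $\eta_x$ and the surface data; combined with the dynamic condition \eqref{dynamic} one gets $|\nabla\eta|^2 v = \eta_x(\zeta_y + \gamma\eta\eta_y) = \pm\eta_x\sqrt{(1-2\alpha(\eta-1))}\,|\nabla\eta|$, so that $v$ and $\eta_x$ have the same (or opposite, fixed) sign along $\Gamma^+$ once we fix the branch. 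Hence the sign of $\eta_x$ on $\Gamma^+$ follows immediately from \eqref{v<0} and the no-stagnation assumption \eqref{no stagnation on free surface}, which guarantees the relevant factors are nonzero.

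For the interior of $\mathcal{R}^+$, I would argue by a maximum principle. The key point is that $-\eta_x$ is a harmonic function in $\mathcal{R}^+$ (since $\eta$ is harmonic by \eqref{harmonic eta}): we show $-\eta_x > 0$ there. On the bottom $B\cap\{x>0\}$ we have $\eta \equiv 0$ by \eqref{bottom eta}, hence $\eta_x = 0$. On the line of symmetry $x=0$, evenness of $\eta$ in $x$ gives $\eta_x = 0$. On $\Gamma^+$ we have just shown $\eta_x < 0$. As $x\to\infty$, $\eta_x\to 0$ by \eqref{tilde regularity}. So $-\eta_x$ is harmonic in $\mathcal{R}^+$, nonnegative on the entire boundary (with strict positivity on $\Gamma^+$), and vanishing at infinity; by the maximum principle $-\eta_x \ge 0$ throughout, and then by the strong maximum principle (using that it is not identically zero, since it is positive on $\Gamma^+$) we get $-\eta_x > 0$ in $\mathcal{R}^+$, which is \eqref{nodal eta} on the positive half. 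The analogous argument on the negative half, or simply the evenness of $\eta$, completes the proof.

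The main obstacle I anticipate is handling the boundary behavior cleanly at the two ``corners'' of $\mathcal{R}^+$ — namely where $\Gamma$ meets the symmetry axis $x=0$ and where $B$ meets it — together with the behavior at infinity. One must make sure the maximum principle applies on the unbounded domain $\mathcal{R}^+$; this is where the decay $\eta_x\to 0$ from \eqref{tilde regularity} is essential, allowing one to apply the maximum principle on large truncated rectangles $\mathcal{R}^+\cap\{x<M\}$ and pass to the limit, with the contribution on $\{x=M\}$ tending to zero. A minor point is confirming that the sign deduced on $\Gamma^+$ is the strict one needed to invoke the strong maximum principle (Hopf lemma) rather than just the weak version; this is exactly what \eqref{v<0} delivers.
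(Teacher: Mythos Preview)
Your approach is essentially the same as the paper's: establish the sign of $\eta_x$ on $\Gamma^+$ from that of $v$, then propagate into $\mathcal{R}^+$ via the strong maximum principle for the harmonic function $\eta_x$, using that $\eta_x=0$ on $B$, on $\{x=0\}$, and at infinity. The interior step is fine.

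There is, however, a genuine (if small) gap in your surface step. You correctly derive $|\nabla\eta|^2\, v = \eta_x(\zeta_y+\gamma\eta\eta_y)$ on $\Gamma$, and the no-stagnation condition \eqref{no stagnation on free surface} indeed forces $\zeta_y+\gamma\eta\eta_y$ to be nonvanishing and hence of constant sign on $\Gamma$. But ``fixing the branch'' is not enough: you must determine \emph{which} sign. If $\zeta_y+\gamma\eta\eta_y<0$ everywhere, then $v<0$ would give $\eta_x>0$ on $\Gamma^+$, the opposite of what you want. The paper resolves this by invoking the asymptotics \eqref{tilde regularity} and \eqref{asymptotic velocity}: as $x\to\infty$ one has $\eta_y\to 1$, $\zeta_y\to 1-\gamma$, $\eta\to 1$, whence $\zeta_y+\gamma\eta\eta_y\to 1>0$, pinning down the positive sign and therefore that $v$ and $\eta_x$ share the same sign on $\Gamma$. (Equivalently, the paper phrases this via the kinematic condition $u\eta_x=v\eta_y$ and the asymptotics $u,\eta_y\to 1$.) Once you insert this one sentence, your argument is complete and matches the paper's.
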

\begin{proof}
	By the kinematic boundary condition \eqref{kinematic velocity}, the vector fields $(u,v)$ and $(\eta_y,\eta_x)=(\xi,\eta)_x$ are non-vanishing and parallel when restricted to $\Gamma$. Moreover, the asymptotic conditions \eqref{tilde regularity} and \eqref{asymptotic velocity} show that $u,n_y\to1$ as $x\to\infty$, ensuring that $v$ and $\eta_x$ have the same sign. Since \eqref{v<0} implies that $\eta_x<0$ on $\Gamma^+$, the result now follows from applying the strong maximum principle to the harmonic function $\eta_x$, which vanishes on the remaining components of the boundary of $\mathcal{R}^+$. 
\end{proof}
The main results in this section are the following two propositions.
\begin{proposition}[Closed condition]\label{prop: closed}
	Suppose that $(\eta,\zeta,\alpha)$ solves \eqref{fullproblem} and define $v$ by \eqref{conformal velocity}. If we have $v\leq0$ on $\Gamma^+$, then the strict inequality \eqref{v<0} holds, or else $v\equiv0$.
\end{proposition}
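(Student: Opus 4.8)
The plan is to run a maximum-principle argument for the harmonic function $v$ on the semi-infinite strip $\mathcal{R}^+$, in the spirit of \cite{csv:critical}, but using the system \eqref{velocity problem} for $(u,v,\eta)$ in place of $(\eta,\zeta,v)$. Recall that $v$ is harmonic in $\mathcal{R}$, that $v=0$ on $B$ by \eqref{kinematic velocity bottom}, and that $v\to 0$ as $|x|\to\infty$ by \eqref{asymptotic velocity}; moreover the symmetry \eqref{symmetry} makes $v$ odd in $x$, so $v=0$ on the segment $\{x=0\}\cap\overline{\mathcal{R}}$. Together with the hypothesis $v\le 0$ on $\Gamma^+$, this means $v\le 0$ on all of $\partial\mathcal{R}^+$.

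First I would show that $v\le 0$ throughout $\overline{\mathcal{R}^+}$. Since $v$ is bounded with $v(x,\cdot)\to 0$ uniformly as $x\to\infty$, for each $\ep>0$ there is $R>0$ with $v<\ep$ on the cross-section $\{x=R\}$; applying the maximum principle to $v$ on the bounded rectangle $(0,R)\times(0,1)$, whose boundary values are all less than $\ep$, gives $v\le\ep$ there, and letting $R\to\infty$ and then $\ep\to 0$ yields $v\le 0$ in $\mathcal{R}^+$. If $v$ attains the value $0$ at an interior point of $\mathcal{R}^+$, the strong maximum principle forces $v\equiv 0$ in $\mathcal{R}^+$, hence $v\equiv 0$ everywhere by oddness, which is the second alternative. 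Otherwise $v<0$ in the open strip $\mathcal{R}^+$, and it remains to upgrade this to $\Gamma^+$.

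It remains to rule out $v(x_0,1)=0$ for some $x_0>0$. If such a point existed, then since $\Gamma^+$ is a flat portion of $\partial\mathcal{R}^+$ and $v<0$ in $\mathcal{R}^+$, the Hopf boundary point lemma would give $v_y(x_0,1)>0$. I claim that in fact $v_y(x_0,1)=0$, which is the contradiction. At $(x_0,1)$ the dynamic condition \eqref{dynamic velocity} reads $u^2=1-2\alpha(\eta-1)$, and the right-hand side is strictly positive (by \eqref{no stagnation on free surface} the factor $1-2\alpha(\eta-1)$ is continuous and nonvanishing on the connected set $\Gamma$ and equals $1$ at infinity, hence is positive on $\Gamma$), so $u(x_0,1)\ne 0$. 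The kinematic condition \eqref{kinematic velocity}, $u\eta_x=v\eta_y$, then forces $\eta_x(x_0,1)=0$. Differentiating \eqref{dynamic velocity} tangentially along $\Gamma$ gives $uu_x+vv_x+\alpha\eta_x=0$, which collapses at $(x_0,1)$ to $uu_x=0$ and hence $u_x(x_0,1)=0$. Finally, the incompressibility relation \eqref{invariant 1}, $u_x+v_y=\gamma\eta_x$, evaluated at $(x_0,1)$ yields $v_y(x_0,1)=\gamma\eta_x(x_0,1)-u_x(x_0,1)=0$, as claimed. Hence $v<0$ on $\Gamma^+$ as well, and together with $v<0$ in $\mathcal{R}^+$ this is precisely \eqref{v<0}.

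I expect the only real subtlety to lie in the first step: because $\mathcal{R}^+$ is unbounded, the passage from $v\le 0$ on $\partial\mathcal{R}^+$ to $v\le 0$ inside needs the exhaustion argument above (a Phragm\'en--Lindel\"of type step) rather than a bare maximum principle, and one should check that the decay in \eqref{asymptotic velocity} is genuinely uniform in $y$. The boundary-point computation, though the crux conceptually, is a short calculation once one works within the $(u,v,\eta)$ formulation \eqref{velocity problem}.
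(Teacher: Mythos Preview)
Your proof is correct and follows essentially the same approach as the paper's. The only cosmetic difference is that the paper first records the boundary relation $u(uv_y-vv_x)-\eta_y(\gamma u+\alpha)v=0$ on $\Gamma$ as a separate lemma (Lemma~\ref{lem: v problem}), which at a point where $v=0$ collapses to $u^2v_y=0$; your inline derivation via \eqref{invariant 1}, \eqref{kinematic velocity}, and the differentiated \eqref{dynamic velocity} amounts to exactly the same computation, and your explicit exhaustion argument for the unbounded strip is a point the paper glosses over.
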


\begin{proposition}[Open condition]\label{prop: open}
	Fix a solution $(\eta^*,\zeta^*,\alpha^*)$ to \eqref{fullproblem} with $v^*$ from \eqref{conformal velocity} satisfying \eqref{v<0} and $\alpha<1-\gamma$. Then there exists an $\ep>0$ such that for all solutions $(\eta,\zeta,\alpha)$ to \eqref{fullproblem} with  $\|\eta^*-\eta\|_{C^3(\mathcal{R})}+\|\zeta^*-\zeta\|_{C^3(\mathcal{R})}+|\alpha^*-\alpha|<\ep,$ the corresponding vertical velocity $v$ also satisfies \eqref{v<0}.	
\end{proposition}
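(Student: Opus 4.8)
The plan is to reduce the open condition to a statement about the sign of $v$ on the free surface alone and then to invoke the closed condition, Proposition~\ref{prop: closed}. First I would note that by the formula \eqref{conformal velocity} together with the standing nondegeneracy assumption \eqref{no stagnation on free surface}, the map $(\eta,\zeta)\mapsto(u,v)$ is continuous from $C^3$ into $C^2$ near $(\eta^*,\zeta^*)$, so that $v\to v^*$ in $C^2(\mathcal{R}^+)$ as $\ep\to 0$; in particular $v\not\equiv 0$ once $\ep$ is small, since $v^*<0$ somewhere. By Proposition~\ref{prop: closed} it is therefore enough to find an $\ep>0$ such that every solution $(\eta,\zeta,\alpha)$ with $\|\eta-\eta^*\|_{C^3(\mathcal{R})}+\|\zeta-\zeta^*\|_{C^3(\mathcal{R})}+|\alpha-\alpha^*|<\ep$ has $v\le 0$ on $\Gamma^+$: the closed condition then automatically upgrades this to the strict inequality \eqref{v<0}.

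To prove $v\le 0$ on $\Gamma^+$ I would split $\Gamma^+$ into the far-field piece $\Gamma\cap\{x\ge T\}$ and the compact piece $\Gamma\cap\{0<x\le T\}$. The far-field piece is the heart of the matter and would be dispatched by Lemma~\ref{lem: smallstrict}. Here the hypothesis $\alpha^*<1-\gamma=\alpha_\crit$ is essential: it makes the linearization of \eqref{fullproblem} at the asymptotic laminar flow nondegenerate, forces solutions to decay exponentially in $x$, and ensures that the slowest-decaying mode is of one sign in $y$. Since $v^*$ is $\le 0$, decays exponentially, and is not identically zero, it is asymptotically a negative multiple of this mode, so a solution $v$ that is $C^3$-close to $v^*$ inherits the same leading behaviour and is therefore negative for $x$ large. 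This fixes a $T>0$ and an $\ep_1>0$ so that the smallness condition with $\ep\le\ep_1$ forces $v<0$ on $\Gamma\cap\{x\ge T\}$, and indeed in $\{x>T\}\times(0,1)$. I expect this to be the main obstacle: in the unbounded strip there is neither compactness of $\Gamma^+$ nor a uniform bound $v^*\le -c<0$ on the surface, so maximum principles alone do not suffice and one must exploit the quantitative exponential asymptotics.

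On the compact piece I would argue as in the periodic theory. For fixed $\delta\in(0,T)$ we have $v^*\le -c_\delta<0$ on $\{(x,1):\delta\le x\le T\}$ by \eqref{v<0}, hence $v<0$ there once $\ep$ is small. Near the crest $v^*(0,1)=0$, but $v^*$ is harmonic in $\mathcal{R}^+$, odd in $x$, and strictly negative in $\mathcal{R}^+$, so the Hopf boundary-point lemma gives $\partial_x v^*<0$ on $\{x=0,\, 0<y<1\}$, while Serrin's edge-point lemma, together with the boundary condition satisfied by $v$ on $\Gamma$ (Lemma~\ref{lem: v problem} and \eqref{v problem}), supplies the corresponding nondegeneracy of $v^*$ at the corners $(0,0)$ and $(0,1)$. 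Since $v$ is likewise odd in $x$ and $\partial_x v,\ \partial_x^2 v$ are $C^0$-close to those of $v^*$, it follows that $v(x,1)<0$ for $0<x\le\delta$ as well, provided $\ep$ is small relative to $T$ and $\delta$. Choosing $\ep$ in this way, after $T$ and $\delta$ have been fixed, we obtain $v\le 0$ on all of $\Gamma^+$, and since $v\not\equiv 0$ the proposition follows from Proposition~\ref{prop: closed}.
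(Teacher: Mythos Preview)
Your overall architecture---split $\Gamma^+$ into a compact piece and a far-field piece, handle the compact piece via Hopf/Serrin at the crest and corners, and handle the far field separately---matches the paper's. The reduction at the start (show $v\le 0$ on $\Gamma^+$ and then upgrade via Proposition~\ref{prop: closed}) is a clean variation; the paper instead proves $v<0$ in $\mathcal{R}^+\cup\Gamma^+$ directly.

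However, there is a real gap in your far-field step. You invoke Lemma~\ref{lem: smallstrict} but then describe a mechanism---$v^*$ is asymptotic to a negative multiple of the slowest mode, so any $C^3$-close $v$ ``inherits the same leading behaviour''---that is \emph{not} what that lemma does and does not follow from $C^3$ closeness. Uniform $C^3$ closeness gives no control on the sign of exponentially small tails: for instance $-e^{-x}$ and $-e^{-x}+\ep\, e^{-x/2}$ are uniformly close but the second is eventually positive. So the leading-mode argument, as written, does not work.

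What Lemma~\ref{lem: smallstrict} actually says is a maximum-principle statement for a \emph{single} solution: if $\|w\|_{C^1(\mathcal R_M^+)}$ is small (so that the coefficient $u^2-\eta_y(\gamma u+\alpha)(1+\ep)$ in the boundary condition for $f=v/(y+\ep)$ has the right sign) and if $v\le 0$ on the vertical segment $L_M^+$, then $v<0$ throughout $\mathcal R_M^+\cup\Gamma_M^+$. The crucial input is the sign condition on $L_M^+$, and this must come from the near-field argument. Accordingly the order is reversed from what you wrote: first choose $M$ so large that $\|w^*\|_{C^1(\mathcal R_M^+)}<\tfrac12\delta$, then run your compact-piece argument on $\{0<x\le 2M\}$ (this is Lemma~\ref{lem: all nodal}, which gives $v<0$ in the full rectangle, not just on the surface, so in particular $v<0$ on $L_M^+$), and only then apply Lemma~\ref{lem: smallstrict} on $\mathcal R_M^+$. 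Your compact-piece argument, by contrast, establishes only $v<0$ on $\Gamma\cap\{0<x\le T\}$, which does not by itself give the needed sign on $L_T^+$.
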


\subsection{Closed condition}
We begin by showing that the vertical velocity component $v$ as defined in \eqref{conformal velocity} satisfies an elliptic PDE. This will not only enable us to prove the closed condition very easily but, as we will see later on, it will also greatly help organize the arguments for the open condition.

\begin{lemma}\label{lem: v problem}
	Suppose $(\eta,\zeta,\alpha)$ solve \eqref{fullproblem} and $(u,v)$ are defined by \eqref{conformal velocity}. Then $v$ solves the following problem
	\begin{subequations}\label{v problem}
		\begin{alignat}{2}
		\Delta v&=0&\qquad&\textup{in }\mathcal{R},\\
		\label{boundary v}
		u(uv_y-vv_x)-\eta_y(\gamma u+\alpha)v&=0&\qquad&\textup{on }\Gamma,\\
		v&=0&\qquad&\textup{on }B.
		\end{alignat}
	\end{subequations}
\end{lemma}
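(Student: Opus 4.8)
The plan is to derive the PDE and boundary conditions for $v$ directly from the governing system \eqref{velocity problem}, exploiting the fact that $u$ and $v$ are the real and imaginary parts (up to sign) of a holomorphic function once the vorticity contribution $\gamma\eta$ is subtracted. First I would note that harmonicity of $v$ in $\mathcal{R}$ and the condition $v=0$ on $B$ are immediate: $v$ is harmonic because $u-iv - \gamma\eta$ is holomorphic and $\eta$ is harmonic, as already observed after \eqref{conformal velocity}, and $v=0$ on $B$ is exactly \eqref{kinematic velocity bottom}. So the only real content is the oblique boundary condition \eqref{boundary v} on $\Gamma$.

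For the boundary condition, I would start from the two surface relations in \eqref{velocity problem}, namely the kinematic condition $u\eta_x = v\eta_y$ from \eqref{kinematic velocity} and the Bernoulli relation $u^2+v^2+2\alpha(\eta-1)=1$ from \eqref{dynamic velocity}, together with the bulk identities \eqref{invariant 1}–\eqref{invariant 2}. The idea is to differentiate the dynamic condition \eqref{dynamic velocity} tangentially along $\Gamma$ and then use \eqref{kinematic velocity} to eliminate the tangential derivative of $\eta$, converting everything into a relation among $v_x$, $v_y$, $u_x$, $u_y$ on $\Gamma$; finally one uses \eqref{invariant 1}–\eqref{invariant 2} to trade the derivatives of $u$ for derivatives of $v$ and $\eta$. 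Concretely, differentiating $u^2+v^2+2\alpha(\eta-1)=1$ in $x$ gives $u u_x + v v_x + \alpha \eta_x = 0$ on $\Gamma$, and differentiating $u\eta_x = v\eta_y$ gives another relation; combining these with $u_x = \gamma\eta_x - v_y$ and $u_y = \gamma\eta_y + v_x$ (from \eqref{invariant 1}–\eqref{invariant 2}) and eliminating $\eta_x$ via the kinematic condition should collapse, after factoring, to
\[
u(u v_y - v v_x) - \eta_y(\gamma u + \alpha) v = 0 \quad\text{on }\Gamma,
\]
which is precisely \eqref{boundary v}. I would carry out this elimination carefully, keeping track of the factor $\eta_x^2+\eta_y^2$ that appears in denominators and is nonvanishing by \eqref{no stagnation on free surface}, so that no division by zero occurs and the final form is polynomial in the listed quantities.

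The step I expect to be the main obstacle is the algebraic manipulation of the surface identities: it is the same kind of ``tedious yet straightforward'' computation flagged in the proof of Lemma~\ref{lem: flowforce indep of x} and in deriving \eqref{holomorphic surface stuff}, where several terms must cancel before the expression factors into the stated oblique form. The key simplifying observations are that on $\Gamma$ one has $u\eta_x = v\eta_y$ (so $(u,v)$ is parallel to $(\eta_y,\eta_x)$), and that the Bernoulli relation lets one replace $u^2+v^2$ by $1-2\alpha(\eta-1)$ whenever it is convenient; using these two facts in tandem is what makes the cancellation work. Once \eqref{boundary v} is obtained, the proof is complete, since harmonicity and the bottom condition were already in hand.
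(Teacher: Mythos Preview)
Your proposal is correct and follows essentially the same route as the paper: differentiate the dynamic condition \eqref{dynamic velocity} in $x$, substitute $u_x=\gamma\eta_x-v_y$ from \eqref{invariant 1}, and then use the kinematic condition \eqref{kinematic velocity} to replace $u\eta_x$ by $v\eta_y$. You slightly over-prepare (differentiating the kinematic condition and invoking \eqref{invariant 2} are not needed, and no denominators $\eta_x^2+\eta_y^2$ actually appear once you work with $u,v$ directly), but the argument is otherwise the paper's.
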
 
\begin{proof}
	Since we know that $(u,v,\eta)$ solve \eqref{velocity problem}, it remains to check \eqref{boundary v}, the boundary condition on $\Gamma$. We begin by differentiating the dynamic boundary condition \eqref{dynamic velocity} with respect to $x$ and multiplying it by $-u/2$ to get 
	\begin{equation}\label{dynamic derivative}
		-u(vv_x+uu_x+\alpha\eta_x)=0.
	\end{equation}
	From \eqref{invariant 1}, we know that $u_x=\eta_x\gamma-v_y$. Inserting this into \eqref{dynamic derivative} and reorganizing yields
	\begin{equation}\label{dynamic derivative 2}
		u(uv_y-vv_x)-\eta_xu(u\gamma+\alpha)=0.
	\end{equation}
	Finally, solving the kinematic condition \eqref{kinematic velocity} for $u\eta_x$ and substituting it in \eqref{dynamic derivative 2} yields \eqref{boundary v}. 
\end{proof}

\begin{proof}[Proof of Proposition~\ref{prop: closed}]
  Assume that $v\not\equiv0$. By \eqref{kinematic velocity bottom} and \eqref{asymptotic velocity}, $v$ vanishes along $B$ and as $x \to \infty$, while the definition \eqref{conformal velocity} together with the evenness of $\eta$ and $\zeta$ implies that $v=0$ on $L$. Since $v \le 0$ on $\Gamma^+$ by assumption, the strong maximum principle therefore implies $v < 0$ in $\mathcal R^+$. We claim that $v < 0$ on $\Gamma^+$. Suppose for the sake of contradiction that $v$ achieves its maximum of $0$ at some point $(x_0,1) \in \Gamma^+$. Then at this point \eqref{boundary v} simplifies to $u^2v_y=0$. The dynamic boundary condition \eqref{dynamic velocity} and \eqref{no stagnation on free surface} imply that $u \ne 0$ at $(x_0,1)$, and so we must have $v_y=0$. But this contradicts the Hopf boundary point lemma. 
\end{proof}

\subsection{Open condition}
As described above, in order to prove the open condition we need to carefully study the behavior of $v$ as $x\to\infty$. To this end, for $M>0$ we split $\mathcal{R}^+$ into a bounded rectangle of length $2M$ and an overlapping semi-infinite strip
\begin{equation*}\label{semi-infinite strip}
\begin{aligned}
\mathcal{R}_M^+:=\{(x,y)\in\mathcal{R}:x>M \},
\end{aligned}
\end{equation*}
with the corresponding analogues for the boundary components
\begin{equation*}
\begin{aligned}
\Gamma_M^+:=\{(x,y)\in\Gamma:x>M\},\qquad
B_M^+:=\{(x,y)\in B: x>M \},\qquad
L_M^+:=\{(x,y)\in\mathcal{R}:x=M\}.
\end{aligned}
\end{equation*}
We will first consider the two domains independently and then glue them back together for the proof of Proposition~\ref{prop: open}. We begin with the bounded rectangle. 
\begin{lemma}\label{lem: all nodal}
	Fix a solution $(\eta^*,\zeta^*,\alpha^*)$ to \eqref{fullproblem} with associated velocity field $(u^*,v^*)$ from \eqref{conformal velocity}, and suppose that $v^*$ satisfies \eqref{v<0} and $\alpha^*<1-\gamma$. For any $M>0$, there exists an $\ep_M>0$ such that for all solutions $(\eta,\zeta,\alpha)$ to \eqref{fullproblem} with  $\|\eta-\eta^*\|_{C^3(\mathcal{R})}+\|\zeta-\zeta^*\|_{C^3(\mathcal{R})}+|\alpha-\alpha^*|<\ep_M,$ the corresponding vertical velocity $v$  satisfies 
	\begin{equation}\label{open in rectangle}
	v<0\qquad\text{in }(\mathcal{R}\cup\Gamma)\cap\{0<x\leq2M\}.
	\end{equation}	
\end{lemma}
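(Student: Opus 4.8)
The plan is to prove Lemma~\ref{lem: all nodal} by a compactness-and-contradiction argument on the \emph{bounded} region $(\mathcal R \cup \Gamma) \cap \{0 \le x \le 2M\}$, exploiting that the only obstructions to the strict inequality $v<0$ are captured by the PDE \eqref{v problem}, the strong maximum principle, and the Hopf lemma—exactly the ingredients used in Proposition~\ref{prop: closed}. First I would observe that on the closed strip $\overline{\mathcal{R}} \cap \{0 \le x \le 2M\}$, the reference velocity $v^*$ (which is harmonic and satisfies \eqref{boundary v}) attains a strictly negative maximum: by \eqref{v<0} we have $v^*<0$ in the open set $\mathcal{R}^+ \cup \Gamma^+$, while on the vertical segment $\{x=0\}$ we have $v^*=0$ by evenness of $\eta^*,\zeta^*$, and on the bottom $B$ we have $v^*=0$. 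So $v^*=0$ only on the portion of the boundary $\{x=0\}\cup (B\cap\{x\le 2M\})$, and I must show that a nearby solution's $v$ also vanishes nowhere in the interior $\mathcal{R}^+ \cap \{0<x<2M\}$ nor on $\Gamma^+ \cap \{0<x\le 2M\}$.

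The core of the argument is then: suppose not, i.e.\ there is a sequence of solutions $(\eta_n,\zeta_n,\alpha_n) \to (\eta^*,\zeta^*,\alpha^*)$ in $C^3 \times C^3 \times \mathbb{R}$ with corresponding $v_n$ from \eqref{conformal velocity}, and points $(x_n,y_n) \in (\mathcal{R}\cup\Gamma)\cap\{0<x\le 2M\}$ at which $v_n(x_n,y_n) \ge 0$. Passing to a subsequence, $(x_n,y_n) \to (x_0,y_0)$ in the \emph{compact} set $\overline{\mathcal{R}}\cap\{0\le x\le 2M\}$, and $v_n \to v^*$ in $C^2$ on this compact set (since convergence in $C^3$ of $(\eta_n,\zeta_n)$ and of $\alpha_n$, together with the non-vanishing denominator guaranteed by \eqref{no stagnation on free surface} holding uniformly near the reference solution, gives $C^2$ convergence of the expression \eqref{conformal velocity}). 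Hence $v^*(x_0,y_0) \ge 0$, which by \eqref{v<0} forces $(x_0,y_0)$ to lie on $\{x=0\}\cup B$. Now I would use the strict monotonicity information at the reference solution to get a contradiction via the Hopf lemma and the derivative of $v^*$. Specifically: on $\{x=0\}$, by evenness $v^*_x \ne 0$ is not automatic, but $v^*$ is odd in $x$ (stated in the excerpt, $v$ odd in $x$), hence $v^*(0,y)=0$ identically there; applying the Hopf lemma (or a direct argument using that $v^*<0$ just to the right) to the harmonic function $v^*$ on $\mathcal{R}^+\cap\{x<2M\}$ at an interior point of $\{x=0\}$ gives $v^*_x(0,y_0) < 0$, so for $n$ large $v_n < 0$ on a fixed small neighborhood of $(0,y_0)$ intersected with $\{x\ge \delta\}$ for suitable $\delta$; one handles the sliver $\{0\le x<\delta\}$ separately using oddness in $x$ of each $v_n$ (so $v_n=0$ on $\{x=0\}$) plus the Hopf bound $v^*_x(0,y_0)<0$ surviving under $C^1$-perturbation. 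Similarly at a point $(x_0,0)\in B$ with $0<x_0<2M$: $v^*=0$ on $B$, so $v^*_y(x_0,0)>0$ by Hopf (applied to $-v^*>0$ in $\mathcal{R}^+$), and this persists for $v_n$ under $C^1$-convergence, giving $v_n<0$ just above $B$ near $x_0$; the corner points $(0,0)$ and $(2M,0)$ require combining both. This is essentially the periodic-case argument, localized; it works precisely because the region is bounded.

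The main obstacle is the boundary $\Gamma^+$: there $v$ does not satisfy a Dirichlet condition but the oblique-type condition \eqref{boundary v}, and a contradiction point could a priori sit at $(x_0,1)\in\Gamma^+$. But here I repeat the argument from Proposition~\ref{prop: closed} verbatim: at such a point $v^*$ attains an interior-of-$\Gamma^+$ maximum $0$, \eqref{boundary v} reduces to $(u^*)^2 v^*_y = 0$, and since $u^* \ne 0$ on $\Gamma^+$ by \eqref{dynamic velocity} and \eqref{no stagnation on free surface} we get $v^*_y(x_0,1)=0$, contradicting the Hopf lemma applied to $v^*$ on $\mathcal{R}^+$ (where $v^*<0$). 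Again one must check this survives the perturbation: for $(\eta_n,\zeta_n,\alpha_n)$ close enough and the contradiction point $(x_n,1)\to(x_0,1)$, either $v_n$ has a genuine nonnegative maximum over the \emph{compact} arc $\Gamma^+\cap\{0\le x\le 2M\}$, at which the same Hopf/boundary-condition contradiction applies directly to $v_n$, or the nonnegative value of $v_n$ occurs at the endpoint $x=0$, handled by oddness as above. Once all boundary and corner cases are excluded, the set of contradiction points is empty, giving \eqref{open in rectangle}. I expect the genuinely delicate bookkeeping to be the corner regions $\{x=0,\,y \text{ near } 1\}$ and $\{x=0,\,y\text{ near }0\}$, where the Dirichlet faces meet $\Gamma$ or $B$, but since everything takes place in a fixed bounded rectangle, uniform Hopf-type estimates hold and the perturbation argument closes.
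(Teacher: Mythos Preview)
Your approach is essentially the same as the paper's: establish strict sign conditions on the first and second derivatives of $v^*$ along the boundary pieces $\{x=0\}$ and $B$ (via Hopf and Serrin's edge point lemma), and then observe that these conditions together with $v^*<0$ in the interior define a $C^2$-open set of functions. Your compactness-and-contradiction framing is simply the unrolled version of the openness claim the paper cites from \cite{cs:stag}.

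There is, however, a genuine gap at the corner $(0,1)$. You acknowledge this corner as ``delicate bookkeeping'' but do not supply the missing idea. The Hopf lemma on $L^+$ only gives $v^*_x(0,y)<0$ for $0<y<1$, and by continuity $v^*_x(0,1)\le 0$; Serrin's edge point lemma alone tells you that not both $v^*_x(0,1)$ and $v^*_{xy}(0,1)$ vanish, but that is not enough to push through the perturbation argument (if $v^*_x(0,1)=0$, writing $v_n = x g_n$ gives $g_n(0,1)\to 0$ with no sign control). What the paper does---and what you are missing---is to differentiate the boundary condition \eqref{boundary v} with respect to $x$ and evaluate at $(0,1)$. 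Using oddness of $v^*$ and $\eta^*_x$, this reduces to $(u^*)^2 v^*_{xy} - u^*(v^*_x)^2 - \eta^*_y(\gamma u^* + \alpha^*) v^*_x = 0$ at $(0,1)$, so $v^*_x(0,1)=0$ would force $v^*_{xy}(0,1)=0$ as well, contradicting Serrin. Hence $v^*_x(0,1)<0$, and the first-order perturbation argument closes.

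Two smaller points: your paragraph on $\Gamma^+$ is unnecessary, since by hypothesis $v^*<0$ there and the limit point $(x_0,y_0)$ cannot land on $\Gamma^+$; and on $B^+$ the Hopf lemma gives $v^*_y<0$, not $v^*_y>0$ (the outward normal is $-\partial_y$).
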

\begin{proof}
	Let $\eta^*,\zeta^*,\alpha^*, u^*$ and $v^*$ be as in the statement of the lemma. We begin by considering the following inequalities
	\begin{subequations}\label{v nodal}
		\begin{alignat}{2}
		\label{vx<0}
		v^*_x&<0&\qquad&\text{on }L^+\cup\{(0,1)\},\\
		\label{vy<0}
		v^*_y&<0&\qquad&\text{on }B^+\cap\{0< x\leq 2M\},\\
		\label{vxy<0}
		v^*_{xy}&<0&\qquad&\text{at }(0,0).
		\end{alignat}
	\end{subequations} 
  and claim that these are simply a consequence of \eqref{v<0} and \eqref{fullproblem}. Indeed, let us first prove that \eqref{vx<0} and \eqref{vy<0} hold. To this end, we differentiate \eqref{boundary v} with respect to $x$. Evaluating it at the wave crest $(0,1)$, we get
	\begin{equation}\label{wave crest}
	(u^*)^2v_{xy}^*-u^*(v_x^*)^2-\eta_y^*(\gamma u^*+\alpha^*)v_x^*=0\qquad\text{at }(0,1),
	\end{equation} 
	after some cancellations. Moreover, since $v^*$ is harmonic and odd in $x$, at $(0,1)$ we must have $v^*=v_{xx}^*=v_{yy}^*=v_y^*=0$. By \eqref{wave crest}, having $v_x^*=0$ at this point would additionally imply $v_{xy}^*=0$, contradicting Serrin's edge point lemma; see \cite[Lemma 1]{serrin}. Moreover, since $v^*=0$ along $L^+$, the Hopf boundary point lemma ensures that $v_x^*<0$ on $L^+$. Similarly, since $v^*$ vanishes along $B^+$, the Hopf lemma gives $v_y^*<0$ on $B^+\cap\{0< x\leq 2M\}$.
	
	It remains to prove \eqref{vxy<0} at the corner point $(0,0)$. Exactly as above we find that $v^*=v_{xx}^*=v_y^*=v_{yy}^*=0$ at this point. Moreover, \eqref{vy<0} implies that $v_{xy}^*\leq0$ there. However, since $v_x^*=0$ along $B$, $v_{xy}^*=0$ at $(0,0)$ would contradict Serrin's lemma and hence we must have $v_{xy}^*<0$ at $(0,0)$.
	
	By arguing as in \cite{cs:stag}, it can be shown that \eqref{open in rectangle} combined with \eqref{v nodal} defines an open set in $C^2(\mathcal{R}^+\cap\{0\leq x\leq 2M\})$. Thus, $v$ also satisfies \eqref{open in rectangle} and \eqref{v nodal} when $\ep_M$ is sufficiently small.
\end{proof}

We now turn to the semi-infinite strip $\mathcal{R}_M^+$. In particular, we have the following result which will allow us to show that \eqref{v<0}  holds in a neighborhood of infinity.

\begin{lemma}\label{lem: smallstrict}
	Fix $\alpha_0\in(0,1-\gamma)$. Then there exists a $\delta=\delta(\alpha_0)>0$ such that the following holds. Let $(\eta,\zeta,\alpha)$ solve \eqref{fullproblem} with $0<\alpha\leq\alpha_0$, and define $v$ by \eqref{conformal velocity} and $w$ by \eqref{tilde}. If, for some $M>0$, $\|w\|_{C^1(\mathcal{R}_M^+)}<\delta$ and $v\leq0$ on $L_M^+$, then $v<0$ in $\mathcal{R}_M^+\cup\Gamma_M^+$, or else $v\equiv0$.
\end{lemma}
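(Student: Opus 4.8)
The plan is to work with the elliptic problem \eqref{v problem} for $v$, restricted to the semi-infinite strip $\mathcal{R}_M^+$. There $v$ is harmonic, vanishes on $B_M^+$, satisfies $v\le 0$ on $L_M^+$ by hypothesis, and—since $\|w\|_{C^1(\mathcal{R}_M^+)}<\delta$ forces $v=(1-\gamma)w_{1x}-w_{2x}+O(\delta^2)$ to be $O(\delta)$ on $\mathcal{R}_M^+$ via \eqref{conformal velocity}, while \eqref{tilde regularity} and \eqref{asymptotic velocity} give $v\to 0$ as $x\to\infty$. Dividing the boundary condition \eqref{boundary v} by $u$, which is bounded away from $0$ on $\Gamma_M^+$ for $\delta$ small by \eqref{dynamic velocity}, it reads $uv_y-vv_x=\eta_y(\gamma u+\alpha)u^{-1}v$ on $\Gamma_M^+$, whose zeroth-order coefficient is close to $\gamma+\alpha$ and may be positive; so a direct maximum principle argument fails. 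The remedy is a multiplicative (ground-state) change of variables.

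First I would fix a positive harmonic barrier on the open strip $\{0<y<1\}$, for instance $\phi(x,y):=e^{\lambda x}\sin(\lambda(y+a))$ with $a>0$ and $\lambda(1+a)<\pi$, so that $\phi>0$ everywhere, $\phi$ grows as $x\to\infty$, $\phi_x/\phi=\lambda$, and $\phi_y/\phi=\lambda\cot(\lambda(y+a))$. Since $\lambda\cot(\lambda(1+a))\to 1/(1+a)$ as $\lambda\to0^+$ and $1/(1+a)\uparrow 1$ as $a\downarrow 0$, the hypothesis $\alpha\le\alpha_0<1-\gamma=\alpha_\crit$ permits a choice of $a,\lambda$ depending only on $\alpha_0$ (recall $\gamma$ is fixed) for which $\phi_y/\phi|_{y=1}$ strictly exceeds $\gamma+\alpha+C\delta$ with $C$ universal, and hence also exceeds $\eta_y(\gamma u+\alpha)u^{-2}$ on $\Gamma_M^+$ once $\delta=\delta(\alpha_0)$ is small; note $\phi$ and $\delta$ are independent of $M$. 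Setting $\tilde v:=v/\phi$, harmonicity of $v$ yields the uniformly elliptic equation $\Delta\tilde v+2\lambda\,\tilde v_x+2\lambda\cot(\lambda(y+a))\,\tilde v_y=0$ in $\mathcal{R}_M^+$, which has no zeroth-order term, so the strong maximum principle and Hopf lemma apply. Moreover $\tilde v=0$ on $B_M^+$, $\tilde v\le 0$ on $L_M^+$, $\tilde v\to0$ as $x\to\infty$, and on $\Gamma_M^+$ the transformed boundary condition takes the form
\[
  u^2\tilde v_y-u\phi\,\tilde v\,\tilde v_x
  =\Big(\eta_y(\gamma u+\alpha)-u^2\tfrac{\phi_y}{\phi}+u\phi_x\tilde v\Big)\tilde v,
\]
in which $\eta_y(\gamma u+\alpha)-u^2\phi_y/\phi$ is bounded above by a fixed negative multiple of $u^2$.

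Next, suppose for contradiction that $m:=\sup_{\overline{\mathcal{R}_M^+}}\tilde v>0$. Because $\tilde v\to0$ at infinity this supremum is attained, and it is not attained on $B_M^+$ (where $\tilde v=0$), nor on $L_M^+$ (where $\tilde v\le0$), nor at an interior point (strong maximum principle, using $\tilde v\to0$); hence it is attained at some $(x_0,1)\in\Gamma_M^+$ with $x_0>M$. There $\tilde v_x=0$, and the Hopf boundary point lemma gives $\tilde v_y(x_0,1)>0$; but at $(x_0,1)$ one has $\phi_x\tilde v=\lambda\phi\tilde v=\lambda v$, which is $O(\delta)$ since $v$ is $O(\delta)$, so for $\delta$ small the bracket in the displayed boundary condition is negative, forcing $u^2\tilde v_y=(\text{negative})\cdot m<0$, a contradiction. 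Hence $\tilde v\le0$, i.e. $v\le0$ throughout $\mathcal{R}_M^+\cup\Gamma_M^+$. If in addition $v\not\equiv0$, then, being harmonic and nonpositive, $v<0$ in $\mathcal{R}_M^+$ by the strong maximum principle; and if $v$ vanished at a point of $\Gamma_M^+$, then $v_x=0$ there while Hopf gives $v_y>0$, and \eqref{boundary v} reduces at that point to $u^2v_y=0$, contradicting $u\ne0$. Thus $v<0$ in $\mathcal{R}_M^+\cup\Gamma_M^+$, or else $v\equiv0$.

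The main obstacle is the barrier construction: one must push $\phi_y/\phi$ along $\Gamma$ above the zeroth-order coefficient $\gamma+\alpha$ of the boundary condition while keeping $\phi$ positive on the full strip, which is possible precisely because $\alpha<\alpha_\crit=1-\gamma$ and fails otherwise—this is exactly where the hypothesis $\alpha_0<1-\gamma$ is used. A secondary point requiring care is the genuinely nonlinear term $vv_x$ in \eqref{boundary v}, which is absorbed into an $O(\delta)$ correction using the uniform smallness of $v$ guaranteed by $\|w\|_{C^1(\mathcal{R}_M^+)}<\delta$.
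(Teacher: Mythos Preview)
Your argument is correct and follows the same underlying idea as the paper: a multiplicative ground-state transform $v\mapsto v/\phi$ with a positive harmonic comparison function chosen so that $\phi_y/\phi|_{y=1}$ dominates the zeroth-order coefficient $\gamma+\alpha$ of \eqref{boundary v}, which is precisely where the supercriticality hypothesis $\alpha_0<1-\gamma$ enters. The difference is only in the choice of barrier. The paper takes the simpler $\phi=y+\ep$, for which $\phi_y/\phi|_{y=1}=1/(1+\ep)$ already suffices; this avoids any $x$-dependence, so the transformed boundary condition \eqref{f boundary} has no term analogous to your $u\phi_x\tilde v$, and no separate argument is needed to control that correction via the $O(\delta)$ bound on $v$. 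Your exponential factor $e^{\lambda x}$ buys faster decay of $\tilde v$ at infinity but is not actually needed, since $v\to0$ and $y+\ep$ is bounded below already force $f=v/(y+\ep)\to0$. In the limit $\lambda\to0$ your barrier reduces (up to a harmless scalar) to $y+a$, which is exactly the paper's choice.
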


\begin{proof}
	We choose $\ep,\delta>0$ sufficiently small so that
	\begin{equation}\label{size ep delta}
		b:=\frac{u^2-\eta_y(\gamma u+\alpha)(1+\ep)}{u^2}>0\qquad\text{and}\qquad u>0\text{ on }\Gamma_M^+.
	\end{equation} 
	This can be done since as $\ep,\delta\to0$ we have $u\to1$ and $b\to1-\gamma-\alpha$, uniformly on $\Gamma_M^+$. Consider now the auxiliary function 
	\begin{equation*}
	f:=\frac{v}{y+\ep}.
	\end{equation*}
	Clearly $f$ and $v$ must have the same sign, and $f$ vanishes at infinity by \eqref{asymptotic velocity}. An easy calculation using \eqref{v problem} shows that $f$ satisfies the elliptic problem
	\begin{subequations}\label{elliptic f}
	\begin{alignat}{2}
	\Delta f+\frac{2}{y+\ep}f_y&=0&\qquad&\text{in }\mathcal{R}_M^+\\
	\label{f boundary}
	(1+\ep)u^2f_y-(1+\ep)uvf_x+bf&=0&\qquad&\text{on }\Gamma_M^+,\\
	f&=0&\qquad&\text{on }B_M^+,
	\end{alignat}
	\end{subequations} 
  where, by \eqref{size ep delta}, the coefficients in front of $f$ and $f_y$
  in \eqref{f boundary} are strictly positive. Suppose for the sake of
  contradiction that $f \not \equiv 0$ achieves its nonnegative supremum over $\mathcal{R}_M^+$ at some point $(x_0,y_0) \in \mathcal{R}_M^+ \cup
  \Gamma_M^+$. By the strong maximum principle, we must have $(x_0,y_0) \in \Gamma_M^+$, and so the Hopf lemma implies $f_y > 0$ there. Since by assumption $f \ge 0$ at this point, the left hand side of \eqref{f boundary} is then strictly positive, which is a contradiction.
\end{proof}

We are now ready to glue the domains back together and prove Proposition~\ref{prop: open}. 

\begin{proof}[Proof of Proposition~\ref{prop: open}]
	Fix $\eta^*,\zeta^*$ and $\alpha^*$  as in the statement and recall that the corresponding $w^*$ is defined in \eqref{tilde}. Moreover, fix $\alpha_0\in(\alpha^*,1-\gamma)$ and choose $M>0$ such that $\|w^*\|_{C^1(\mathcal{R}_M^+)}<\tfrac{1}{2}\delta$, where $\delta=\delta(\alpha_0)>0$ is as in Lemma~\ref{lem: smallstrict}. Finally, pick $\ep_M>0$ such that Lemma~\ref{lem: all nodal} holds for $w=w^*$. Choosing $\ep:=\min(\ep_M,\tfrac{1}{2}\delta,|\alpha_0-\alpha^*|)$ ensures, by Lemma~\ref{lem: all nodal}, that \eqref{open in rectangle} holds. In particular, $v\leq0$ on $L_M^+$. Since $\|w\|_{C^1(\mathcal{R}_M^+)}<\delta$, Lemma~\ref{lem: smallstrict} yields $v<0$ in $\mathcal{R}_M^+\cup\Gamma_M^+$. Combining with \eqref{open in rectangle} gives the result.
\end{proof}

\section{Functional analytic formulation and linearized operators}\label{sect: Functional Analytic}
We now express \eqref{fullproblem} as a nonlinear operator equation in a suitable Banach space and analyze the associated linearized operators. For convenience, we will work with problem \eqref{fullproblem} expressed in terms of $w=(w_1,w_2)$, as defined in \eqref{tilde}. This will put us in the proper setting to prove the existence results in Section~\ref{sect: Existence Results}.  

\subsection{Functional analytic formulation}\label{subsect: function spaces}
We will work in the Banach spaces
\begin{equation*}
\begin{aligned}
\mathcal{X}&=\{w\in  C^{3+\beta}_{\be}(\overline{\mathcal{R}})\cap C^{2}_0(\overline{\mathcal{R}}):\Delta w=0\text{ in }\mathcal{R},\; w=0\text{ on }B  \},\\
\mathcal{Y}&=(C_{\mathrm{b}}^{3+\beta}(\Gamma)\cap C^2_0(\Gamma))\times (C_{\mathrm{b}}^{2+\beta}(\Gamma)\cap C^{1}_0(\Gamma)),
\end{aligned}
\end{equation*}
as well as in the larger spaces $\mathcal{X}_\be$ and $\mathcal{Y}_\be$ whose elements do not necessarily vanish at infinity, that is, the intersections with $C_0^1$ and $C_0^2$ are removed.
Moreover, we define the open subset
\begin{align}
  \label{eqn:U}
  \mathcal{U}:=\Big\{(w,\alpha)\in\mathcal{X}\times\mathbb{R}: \alpha<\alpha_\crit,\,\, \lambda(w,\alpha)>0 \Big\}\subset\mathcal{X}\times\mathbb{R},
\end{align}
where
\begin{alignat}{2}\label{eqn:lambda}
\lambda(w,\alpha):=\inf_{\mathcal{R}}4(1-2\alpha w_1)^2\big(w_{1x}^2+(1+w_{1y})^2\big)>0
\end{alignat}
is the assumption \eqref{no stagnation on free surface} expressed in terms of $w$. We will see that $\lambda(w,\alpha)$ is closely related to the \emph{minor}, or \emph{Lopatinskii}, constant for the linearized problem, as referred to in \cite{adn2}. In particular, \eqref{eqn:lambda} is a sufficient condition for being able to apply linear Schauder estimates to the linearized problem. 

We can now interpret \eqref{fullproblem} as the nonlinear operator equation
\begin{equation}\label{fullproblem operator}
\mathscr{F}(w,\alpha)=0,
\end{equation}
where $\mathscr{F}:=(\mathscr{F}_1,\mathscr{F}_2):\mathcal{X}\times\mathbb{R}\rightarrow \mathcal{Y}$ is given by
\begin{equation}\label{operator F}
\begin{aligned}
\mathscr{F}_1(w,\alpha)&=(w_2+\gamma w_1+\tfrac{1}{2}\gamma w_1^2),\\
\mathscr{F}_2(w,\alpha)&=\big(\gamma(w_1+w_{1y}+w_1w_{1y})+w_{2y}+1\big)^2-(1-2\alpha w_1)\big(w_{1x}^2+(w_{1y}+1)^2\big).
\end{aligned}
\end{equation}
Linearizing this operator, we get
\begin{equation}\label{linearized operator}
\begin{aligned}
\mathscr{F}_{1w}(w,\alpha)\dot{w}&=c_i\dot{w}_i,\\
\mathscr{F}_{2w}(w,\alpha)\dot{w}&=a_{ij}\partial_i\dot{w}_j+b_i\dot{w}_i,
\end{aligned}
\end{equation}
where $\partial_1=\partial_x$ and $\partial_2=\partial_y$, and where the coefficients are given by
\begin{equation}\label{coefficients}
	\begin{aligned}
	a_{11}&=-2(1-2\alpha w_1)w_{1x},\\
	a_{12}&=2\gamma\big(w_{2y}+1+\gamma(w_1+w_{1y}+w_1w_{1y})\big)(1+w_1)-2(1-2\alpha w_1)(1+w_{1y}),\\
	a_{21}&=0,\\
	a_{22}&=2\big(w_{2y}+1+\gamma(w_1+w_{1y}+w_1w_{1y})\big),\\
	b_1&=2\gamma(w_{1y}+1)\big(w_{2y}+1+\gamma(w_1+w_{1y}+w_1w_{1y})\big)+2\alpha(w_{1x}^2+(1+w_{1y})^2),\\
	b_2&=0,\\
	c_1&=\gamma(1+w_1),\\
	c_2&=1.
	\end{aligned}
\end{equation}
Note that \eqref{linearized operator} is the type of operator considered in Appendix~\ref{app: schauder estimates}. Linearizing about the trivial solution $w\equiv0$ we obtain
\begin{subequations}\label{linearized operator trivial}
\begin{align}
%\label{linearized kinetic}
\mathscr{F}_{1w}(0,\alpha)\dot{w}&=\gamma\dot{w}_1+\dot{w}_2,\\
%\label{linearized dynamic}
\mathscr{F}_{2w}(0,\alpha)\dot{w}&=2\big(\dot{w}_{2y}+(\gamma-1)\dot{w}_{1y}+(\gamma+\alpha)\dot{w}_1\big).
\end{align}
\end{subequations}
\begin{remark}\label{rem:limiting operator}
	Thanks to \eqref{tilde regularity}, the \emph{limiting operator} of $\mathscr{F}_w(w,\alpha)$, obtained by sending $x\to\pm\infty$ in the coefficients, is none other than $\mathscr{F}_w(0,\alpha)$.
\end{remark}

\subsection{Local properness and invertibility properties}\label{subsect: linearized op}
In this subsection, we closely study the linearized operator $\mathscr{F}_w(0,\alpha)\colon\mathcal{X}\to\mathcal{Y}$ given in \eqref{linearized operator trivial} when $\alpha<\alpha_\crit$ is supercritical. In particular, we will show that in this regime, $\mathscr{F}_w(0,\alpha)$ is injective $\mathcal{X}_\be\to\mathcal{Y}_\be$. Using standard translation arguments and Schauder estimates, we then get that $\mathscr{F}_w(0,\alpha)$ is \emph{locally proper}. That is, the pre-image under $\mathscr{F}_w(0,\alpha)$ of any compact set intersected with a closed and bounded set in the domain is compact. This is equivalent to $\mathscr{F}_w(0,\alpha)$ being semi-Fredholm with finite dimensional kernel and closed range. From this we show that $\mathscr{F}_w(0,\alpha)$ is Fredholm with index $0$ and that the full linearized operator $\mathscr{F}_w(w,\alpha)$ is locally proper $\mathcal{X}\to\mathcal{Y}$.

\begin{remark}
	A general reference on elliptic problems and Fredholmness in unbounded domains is \cite{volpert:book}. See also \cite{paper} and \cite{cww:globalfronts}.
\end{remark}

Before we begin, we must show that we can apply Schauder estimates to the linearized operator $\mathscr{F}_w(w,\alpha)$. 
\begin{lemma}\label{lem: schauder applied}
	For $(w,\alpha)\in\mathcal{U}$, the linearized operator $\mathscr{F}_w(w,\alpha)\colon\mathcal{X}_\be\to\mathcal{Y}_\be$ in \eqref{linearized operator} enjoys the Schauder estimate 
	\begin{equation*}
	\|\dot{w}\|_{\mathcal{X}_\be}\leq C\big(\|\mathscr{F}_{w}(w,\alpha)\dot{w}\|_{\mathcal{Y}_\be}+\|\dot{w}\|_{C^0(\mathcal{R})} \big),
	\end{equation*}
	where the constant $C$ depends on $\|w\|_\mathcal{X}$, on $\alpha$ and on the minor constant $\lambda(w,\alpha)$.
\end{lemma}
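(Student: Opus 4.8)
The plan is to verify that the linearized operator $\mathscr{F}_w(w,\alpha)$ falls into the class of elliptic systems treated by the Schauder estimates collected in Appendix~\ref{app: schauder estimates} (following~\cite{adn2}), and that the minor (Lopatinskii) condition holds with a constant controlled from below by $\lambda(w,\alpha)$. The system consists of the two interior equations $\Delta\dot w_1=0$, $\Delta\dot w_2=0$ in $\mathcal{R}$ (encoded in the definition of $\mathcal{X}_\be$), the Dirichlet conditions $\dot w_1=\dot w_2=0$ on $B$, and the two boundary operators on $\Gamma$ given by $\mathscr{F}_{1w}$ and $\mathscr{F}_{2w}$ in~\eqref{linearized operator}. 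Since the interior part is just two decoupled copies of the Laplacian, ellipticity in the sense of Agmon--Douglis--Nirenberg is automatic with the obvious choice of Douglis--Nirenberg weights (say $s_1=s_2=0$ for the equations, $t_1=3$, $t_2=3$ for the unknowns, so that $\mathscr{F}_{1w}$ has order $0$ and $\mathscr{F}_{2w}$ has order $1$ relative to the appropriate indexing); one only needs to record these weights so that the hypotheses of Lemma~\ref{lem: schauder estimates} are literally met.

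The substantive point, and the main obstacle, is the \emph{complementing (Lopatinskii--Shapiro) condition} on $\Gamma$. Concretely, one freezes coefficients at an arbitrary boundary point, takes the half-plane model problem, and checks that the only exponentially decaying solution of the homogeneous frozen interior system satisfying the homogeneous frozen boundary conditions is the trivial one; moreover one needs this to hold \emph{quantitatively}, with the relevant determinant bounded away from zero in terms of $\lambda(w,\alpha)$. For the Laplacian the decaying half-plane solutions are spanned by $e^{-|k|y}$ in each component, so after taking a Fourier variable $k\neq 0$ the model boundary system reduces to a $2\times 2$ linear system in the two amplitudes, with matrix built from the coefficients $c_1,c_2,a_{11},a_{12},a_{22},b_1$ in~\eqref{coefficients} evaluated at the frozen point (together with the factors $ik$ and $-|k|$ coming from $\partial_x$ and $\partial_y$). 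The complementing condition is equivalent to the nonvanishing of the determinant of this matrix for all real $k\neq 0$; a direct computation should show that this determinant equals (a nonzero constant times) $|k|$ times a factor that, on $\Gamma$, is precisely $\bigl(2\alpha w_1-1\bigr)^2\bigl(w_{1x}^2+(1+w_{1y})^2\bigr)$ up to harmless positive factors — i.e.\ exactly the quantity whose infimum is $\tfrac14\lambda(w,\alpha)$. (Heuristically this is the same algebra that makes~\eqref{no stagnation on free surface} the natural nondegeneracy hypothesis: it says the conformal map is nondegenerate and we are away from a wave of greatest height.) Thus the minor constant of the frozen problem is bounded below by a positive multiple of $\lambda(w,\alpha)$, uniformly over boundary points, using that $w\in\mathcal{X}$ gives uniform bounds on the coefficients and their Hölder norms.

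With ellipticity and the uniform complementing condition in hand, Lemma~\ref{lem: schauder estimates} of Appendix~\ref{app: schauder estimates} applies directly and yields
\[
\|\dot w\|_{\mathcal{X}_\be}
\le C\bigl(\|\mathscr{F}_w(w,\alpha)\dot w\|_{\mathcal{Y}_\be}+\|\dot w\|_{C^0(\mathcal{R})}\bigr),
\]
where the constant $C$ depends only on the ellipticity constants, on the $C^{2+\beta}$ norms of the coefficients (hence on $\|w\|_{\mathcal{X}}$ and $\alpha$), and on the lower bound for the minor constant, i.e.\ on $\lambda(w,\alpha)$; the lower-order term $\|\dot w\|_{C^0(\mathcal{R})}$ absorbs the $\dot w$-terms in~\eqref{linearized operator} and the usual compactness/interpolation slack in the Schauder bound. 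One small bookkeeping step is to note that the estimate from Appendix~\ref{app: schauder estimates} is a priori local-plus-interior; because the coefficients are uniformly $C^{2+\beta}$ on all of $\overline{\mathcal{R}}$ and the strip has bounded geometry, the local estimates patch together to the global estimate on the unbounded strip $\mathcal{R}$ with a uniform constant, which is what the statement asserts. The expected difficulty is entirely in the frozen-coefficient determinant computation verifying the complementing condition and identifying its determinant with $\lambda(w,\alpha)$; everything else is a matter of matching notation with the appendix.
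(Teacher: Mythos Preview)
Your proposal is correct and takes essentially the same approach as the paper. The paper's proof is more terse because it simply invokes Lemma~\ref{lem: schauder estimates} and checks its packaged hypothesis~\eqref{complementing condition bound} directly: plugging the coefficients~\eqref{coefficients} into $(c_1a_{21}-c_2a_{11})^2+(c_1a_{22}-c_2a_{12})^2$ gives exactly $4(1-2\alpha w_1)^2\bigl(w_{1x}^2+(1+w_{1y})^2\bigr)=\lambda(w,\alpha)$ on $\Gamma$, so the half-plane Lopatinskii analysis you outline is already absorbed into the appendix lemma and need not be redone here.
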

\begin{proof}
	The proof follows directly from Lemma~\ref{lem: schauder estimates} from the Appendix provided we can show the uniform bound 
	\begin{equation}\label{complementing condition bound reflected}
		\inf_\Gamma\big((c_1a_{21}-c_2a_{11})^2+(c_1a_{22}-c_2a_{12})^2\big)>0.
	\end{equation} 
	Using the coefficients in \eqref{coefficients}, we calculate the left-hand side of \eqref{complementing condition bound reflected} to be $\inf_\Gamma4(1-2\alpha w_1)^2\big(w_{1x}^2+(1+w_{1y})^2\big)$. This is bounded below by $\lambda(w,\alpha)$ as defined in \eqref{eqn:lambda}, thus concluding the proof.
\end{proof}
The next lemma yields the sense in which $\alpha_\crit$ as defined in \eqref{alpha crit} is the critical wave speed parameter. Specifically, we will see that at the value $\alpha=\alpha_\crit$ the linear operator $\mathscr{F}_w(0,\alpha)$ is singular in the sense that it has a one-dimensional kernel. Due to our unbounded domain, this result combined with a translation argument shows that $\mathscr{F}_w(w,\alpha_\crit)$ is a non-Fredholm map from $\mathcal{X}\to\mathcal{Y}$ (see \cite{volpert:book} for more details). By contrast, in the periodic setting, Schauder estimates would be sufficient for proving that the linearized operator is semi-Fredholm.
\begin{lemma}\label{lem: trivial kernel}
The linear operator $\mathscr{F}_w(0,\alpha)\colon\mathcal{X}_\be\to\mathcal{Y}_\be$ given in \eqref{linearized operator trivial} has a trivial kernel if and only if $\alpha<\alpha_\crit$. 	
\end{lemma}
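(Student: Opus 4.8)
The plan is to compute the kernel of $\mathscr{F}_w(0,\alpha)$ explicitly by exploiting the fact that its elements are bounded harmonic functions on the strip $\mathcal{R}$ vanishing on the bottom $B$. First I would observe that any $\dot w = (\dot w_1, \dot w_2) \in \mathcal{X}_\be$ in the kernel consists of two bounded harmonic functions on $\mathcal{R}$ vanishing on $\{y=0\}$, subject to the two linear boundary conditions on $\Gamma$ coming from \eqref{linearized operator trivial}: namely $\gamma \dot w_1 + \dot w_2 = 0$ and $\dot w_{2y} + (\gamma-1)\dot w_{1y} + (\gamma + \alpha)\dot w_1 = 0$ on $\{y=1\}$. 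Substituting the first relation into the second eliminates $\dot w_2$, leaving a single scalar condition on $\dot w_1$: since $\dot w_2 = -\gamma \dot w_1$, we get $-\gamma \dot w_{1y} + (\gamma-1)\dot w_{1y} + (\gamma+\alpha)\dot w_1 = 0$, i.e. $-\dot w_{1y} + (\gamma + \alpha)\dot w_1 = 0$ on $\Gamma$. So the problem reduces to: find all bounded harmonic $\dot w_1$ on $\mathcal{R}$ with $\dot w_1 = 0$ on $B$ and $\dot w_{1y} = (\gamma+\alpha)\dot w_1$ on $\Gamma$.

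Next I would analyze this scalar problem via separation of variables / Fourier analysis in $x$. A bounded harmonic function on the strip vanishing on $y=0$ has (in a suitable sense, e.g. via the Fourier transform in $x$ or a Liouville-type argument) a representation built from modes $\sin(k y)$, $\sinh(k y)$ paired with $e^{\pm ikx}$, $e^{\pm kx}$; the only genuinely bounded, spatially decaying or constant-in-$x$ solutions relevant here are (i) the linear mode $\dot w_1 = c\, y$ and (ii) oscillatory modes $\dot w_1 = c\, e^{-k|x|}\sin(ky)$-type contributions, but for a true element of $\mathcal{X}_\be$ on the whole line the admissible bounded solutions are spanned by $y$ together with exponentially growing/decaying pieces that are excluded by boundedness unless the relevant dispersion relation is satisfied. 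Plugging $\dot w_1 = y$ into the boundary condition $\dot w_{1y} = (\gamma+\alpha)\dot w_1$ at $y=1$ gives $1 = \gamma + \alpha$, i.e. $\alpha = 1 - \gamma = \alpha_\crit$. Thus the linear mode lies in the kernel precisely when $\alpha = \alpha_\crit$, giving the ``only if'' direction and, at $\alpha_\crit$, a (one-dimensional) kernel. For $\alpha < \alpha_\crit$ one must show no nontrivial bounded solution exists: here I would argue that the remaining modes would have to be of the form $\dot w_1 = \sinh(\tau y)\cos(\sigma x)$ or $\sin(\tau y)\cosh(\sigma x)$ etc., and boundedness on $\mathbb{R}$ together with the boundary relation forces a transcendental equation ($\tau \coth \tau = \gamma + \alpha$ or $\tau \cot \tau = \gamma+\alpha$) which, combined with $\alpha < \alpha_\crit = 1-\gamma$ (so $\gamma + \alpha < 1$), has no admissible root; alternatively, and more cleanly, use the injectivity argument promised in the text just before the lemma (``we will show that in this regime, $\mathscr{F}_w(0,\alpha)$ is injective $\mathcal{X}_\be \to \mathcal{Y}_\be$'') via an energy/maximum-principle estimate.

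For the ``if'' direction ($\alpha < \alpha_\crit \Rightarrow$ trivial kernel) I would therefore prefer a direct energy argument rather than spectral bookkeeping: given $\dot w_1$ bounded harmonic on $\mathcal{R}$, vanishing on $B$, with $\dot w_{1y} = (\gamma + \alpha)\dot w_1$ on $\Gamma$, integrate $\dot w_1 \Delta \dot w_1 = 0$ over the truncated rectangle $(-M,M)\times(0,1)$, integrate by parts, and let $M \to \infty$. The side contributions vanish because $\dot w_1$ and its derivatives are controlled at infinity by the Schauder estimate of Lemma~\ref{lem: schauder applied} applied to the difference against the limiting operator, together with an argument that a bounded harmonic function on the strip satisfying these homogeneous conditions must actually decay (or be the laminar mode $cy$, which is excluded when $\gamma + \alpha \neq 1$). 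This yields
\begin{equation*}
  \int_{\mathcal{R}} |\nabla \dot w_1|^2 \, dx\, dy = (\gamma + \alpha) \int_{\Gamma} \dot w_1^2 \, dx,
\end{equation*}
and then a trace/Poincaré-type inequality on the strip — precisely $\int_\Gamma \dot w_1^2 \le \int_{\mathcal{R}} |\nabla \dot w_1|^2$, valid because $\dot w_1$ vanishes on $B$ with the optimal constant governed by $\dot w_1 = cy$ — forces $\dot w_1 \equiv 0$ when $\gamma + \alpha < 1$, hence $\dot w_2 \equiv 0$. The main obstacle I anticipate is the behavior at infinity: justifying that the boundary-term integrals over $\{x = \pm M\}$ vanish as $M \to \infty$ requires knowing that $\dot w_1$ genuinely decays rather than merely being bounded, and ruling out the laminar mode $cy$ as a limiting profile. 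This is exactly the subtlety the text flags about the unbounded domain, and it is handled by combining the limiting-operator observation in Remark~\ref{rem:limiting operator} with the Schauder estimate and a translation argument of the type used in Lemma~\ref{lem: compactness}; the sharp trace inequality (with the correct constant $1$, attained only by $cy$) is the other point that must be stated carefully so that the strict inequality $\gamma + \alpha < 1$ is what closes the argument.
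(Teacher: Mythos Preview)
Your reduction to a scalar problem is a nice simplification, but the step ``since $\dot w_2 = -\gamma \dot w_1$, we get $\dot w_{2y} = -\gamma \dot w_{1y}$'' needs justification: the relation $\gamma\dot w_1 + \dot w_2 = 0$ is only given on $\Gamma$, so tangential differentiation is free but the normal derivative is not. What makes the step work is that $h := \dot w_2 + \gamma\dot w_1$ is a \emph{bounded} harmonic function on the strip vanishing on both $B$ and $\Gamma$, and hence identically zero by a Phragm\'en--Lindel\"of argument; you should say this explicitly. Once this is done, your separation-of-variables analysis yields the dispersion relation $k\coth k = \gamma+\alpha$, which is exactly the paper's proof (the paper writes it for the system rather than first reducing to a scalar, but the content is identical). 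Both directions of the ``if and only if'' then follow from the fact that $k\mapsto k\coth k$ maps $\mathbb{R}$ onto $[1,\infty)$.

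The energy argument you say you ``prefer'' for the $\alpha<\alpha_\crit$ direction, however, has a genuine gap. Elements of $\mathcal X_\be$ are merely bounded in $C^{3+\beta}$ with no decay whatsoever; the candidate kernel elements $\sinh(ky)\cos(kx)$ are prototypical, and for these neither $\int_{\mathcal R}|\nabla\dot w_1|^2$ nor $\int_\Gamma \dot w_1^2$ converges. Your proposed fix --- translation plus Schauder estimates --- does not establish decay: translating a hypothetical nontrivial kernel element and passing to a limit produces another (possibly nontrivial) kernel element, which is precisely what you are trying to rule out, so the argument is circular. The trace/Poincar\'e inequality you invoke is also only available once integrability is known. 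In short, the energy route cannot close without first proving decay, and proving decay already requires knowing the kernel is trivial; stick with the dispersion-relation argument.
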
 
\begin{proof}
	Using separation of variables, it is enough to rule out solutions $(\dot{w}_1,\dot{w}_2)\in\mathcal{X}_\be$ of the form 
	\begin{equation}\label{kernel}
	\begin{aligned}
    \dot{w}_1&=c_1\cos(kx)\sinh(ky),\\
    \dot{w}_2&=c_2\cos(kx)\sinh(ky),
	\end{aligned}	
	\end{equation}
  for some real wave number $k$ and constants $c_1$ and $c_2$. Plugging \eqref{kernel} into the boundary conditions in \eqref{linearized operator trivial}, some algebra shows that we have non-trivial solutions if and only if the dispersion relation
	\begin{equation}\label{dispersion relation}
	\gamma+\alpha=k\coth(k)
	\end{equation}
	holds. Since $k\coth(k)$ attains its minimum value of $1$ at $k=0$, \eqref{dispersion relation} has no real solutions if $\alpha+\gamma<1$. 
\end{proof}
In other words, Lemma~\ref{lem: trivial kernel} implies that $\mathscr{F}_w(0,\alpha)$ is an injective mapping from $\mathcal{X}_\be$ to $\mathcal{Y}_\be$ for $\alpha<\alpha_\crit$. We now turn to the question of local properness and invertibility, made more subtle by the unboundedness of the domain.

\begin{lemma}\label{lem: locally proper full}
	For $(w,\alpha)\in\mathcal{U}$, the linearized operator $\mathscr{F}_w(w,\alpha)$ is locally proper both $\mathcal{X}_\be\to\mathcal{Y}_\be$ and $\mathcal{X}\to\mathcal{Y}.$
\end{lemma}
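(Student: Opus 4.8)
The plan is to prove local properness by the standard combination of Schauder estimates, Arzel\`a--Ascoli, and a translation argument exploiting the injectivity of the limiting operator; compare \cite{volpert:book,paper,cww:globalfronts}. I first treat $\mathscr{F}_w(w,\alpha)\colon\mathcal X_\be\to\mathcal Y_\be$. So let $\dot w_n$ be bounded in $\mathcal X_\be$ with $\mathscr{F}_w(w,\alpha)\dot w_n\to g$ in $\mathcal Y_\be$; the goal is to extract a subsequence converging in $\mathcal X_\be$. Since $\mathcal X_\be$ carries the $C^{3+\beta}_\be(\overline{\mathcal R})$ norm, Arzel\`a--Ascoli produces a subsequence with $\dot w_n\to\dot w$ in $C^3_{\mathrm{loc}}(\overline{\mathcal R})$; passing to the limit in the (local) equations and boundary conditions, using that the $\mathcal Y_\be$ norm controls locally uniform convergence of the data, one gets $\dot w\in\mathcal X_\be$ with $\mathscr{F}_w(w,\alpha)\dot w=g$. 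Writing $v_n:=\dot w_n-\dot w$, we then have $v_n\to0$ in $C^3_{\mathrm{loc}}$ and $\mathscr{F}_w(w,\alpha)v_n\to0$ in $\mathcal Y_\be$, so by the Schauder estimate in Lemma~\ref{lem: schauder applied} it suffices to prove $v_n\to0$ in $C^0(\overline{\mathcal R})$.

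Suppose this fails. After a further subsequence there are $\ep>0$ and points $(x_n,y_n)\in\overline{\mathcal R}$ with $|v_n(x_n,y_n)|\ge\ep$, and necessarily $|x_n|\to\infty$ since $v_n\to0$ in $C^3_{\mathrm{loc}}$; say $x_n\to+\infty$. Set $\tilde v_n(x,y):=v_n(x+x_n,y)$, which is again bounded in $C^{3+\beta}_\be(\overline{\mathcal R})$ and solves the linearized problem with the translated coefficients of $\mathscr{F}_w(w,\alpha)$. Because $w\in\mathcal X\subset C_0^2(\overline{\mathcal R})$, these translated coefficients converge in $C^2_{\mathrm{loc}}(\overline{\mathcal R})$ to the laminar values, i.e.\ to the coefficients of $\mathscr{F}_w(0,\alpha)$ by Remark~\ref{rem:limiting operator}; moreover the minor constant in \eqref{eqn:lambda} and the norm $\|w\|_{\mathcal X}$ are translation invariant, so Lemma~\ref{lem: schauder applied} applies to the $\tilde v_n$ with a uniform constant. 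Passing to a subsequence, $\tilde v_n\to\tilde v$ in $C^3_{\mathrm{loc}}(\overline{\mathcal R})$ with $\tilde v\in\mathcal X_\be$ and $\mathscr{F}_w(0,\alpha)\tilde v=0$. Since $(w,\alpha)\in\mathcal U$ forces $\alpha<\alpha_\crit$, Lemma~\ref{lem: trivial kernel} gives $\tilde v\equiv0$. But choosing a subsequence with $y_n\to y_0\in[0,1]$, local uniform convergence gives $\tilde v_n(0,y_n)\to\tilde v(0,y_0)=0$, contradicting $|\tilde v_n(0,y_n)|=|v_n(x_n,y_n)|\ge\ep$. Hence $v_n\to0$ in $C^0$, and Lemma~\ref{lem: schauder applied} then yields $v_n\to0$ in $\mathcal X_\be$, proving local properness of $\mathscr{F}_w(w,\alpha)\colon\mathcal X_\be\to\mathcal Y_\be$.

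For the map $\mathcal X\to\mathcal Y$: a sequence $\dot w_n$ bounded in $\mathcal X$ with $\mathscr{F}_w(w,\alpha)\dot w_n$ convergent in $\mathcal Y$ is in particular bounded in $\mathcal X_\be$ with image convergent in $\mathcal Y_\be$, so by the first part it has a subsequence converging in $\mathcal X_\be$ to some $\dot w$. Since each $\dot w_n\in C_0^2(\overline{\mathcal R})$ and the convergence is uniform up to second derivatives, the limit $\dot w$ also lies in $C_0^2(\overline{\mathcal R})$, hence $\dot w\in\mathcal X$ and the convergence takes place in $\mathcal X$.

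The main obstacle is the translation step: one must verify that the translated problems genuinely fall under a single Schauder estimate and that their coefficients converge (locally, coefficientwise) to those of $\mathscr{F}_w(0,\alpha)$ rather than to some other limiting operator. Both points rest on the decay $w\in C_0^2(\overline{\mathcal R})$, the translation invariance of the minor constant \eqref{eqn:lambda} and of $\|w\|_{\mathcal X}$, and Remark~\ref{rem:limiting operator}; everything else is a routine compactness argument.
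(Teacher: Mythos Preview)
Your proof is correct and follows essentially the same approach as the paper: the paper's argument simply cites the standard ``translation argument and Schauder estimates'' machinery from \cite{volpert:book} together with Lemma~\ref{lem: trivial kernel}, whereas you have spelled out this argument in detail. The treatment of the $\mathcal X\to\mathcal Y$ case via closedness of $\mathcal X$ in $\mathcal X_\be$ is also identical to the paper's.
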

\begin{proof}
	By using a translation argument and Schauder estimates, one can show that $\mathscr{F}_w(w,\alpha)$ is locally proper $\mathcal{X}_\be\to\mathcal{Y}_\be$ if and only if the limiting operator $\mathscr{F}_w(0,\alpha)\colon\mathcal{X}_\be\to\mathcal{Y}_\be$ (see Remark~\ref{rem:limiting operator}) has a trivial kernel \cite{volpert:book}. Therefore, by Lemma~\ref{lem: trivial kernel} the linearized operator $\mathscr{F}_w(w,\alpha)$ is locally proper $\mathcal{X}_\be\to\mathcal{Y}_\be$.
	
	It remains to show that $\mathscr{F}_w(w,\alpha)$ is also locally proper $\mathcal{X}\to\mathcal{Y}$. Let $\dot{w}_n\in\mathcal{X}$ be a bounded sequence such that $\mathscr{F}_w(w,\alpha)\dot{w}_n$ is a convergent sequence in $\mathcal{Y}$. Since $\mathscr{F}_w(w,\alpha)\colon\mathcal{X}_\be\to\mathcal{Y}_\be$ is locally proper, we can extract a subsequence so that $\dot{w}_n\to \dot{w}\in\mathcal{X}_\be$. Since $\mathcal{X}$ is a closed subspace, we must have $\dot{w}\in\mathcal{X}$, thus concluding the proof. 
\end{proof}

Finally, we show that for the linearized operator at the trivial solution $\mathscr{F}_w(0,\alpha)$ we get a stronger result, namely invertibility.

\begin{lemma}\label{lem: invertible}
	For $(w,\alpha)\in\mathcal{U}$, the linear operator   $\mathscr{F}_w(0,\alpha)$ is invertible both $\mathcal{X}_\be\to\mathcal{Y}_\be$ and $\mathcal{X}\to\mathcal{Y}$.
\end{lemma}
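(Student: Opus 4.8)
First I would observe that injectivity is already available: since $(w,\alpha)\in\mathcal{U}$ forces $\alpha<\alpha_\crit$, Lemma~\ref{lem: trivial kernel} gives that $\mathscr{F}_w(0,\alpha)$ has trivial kernel on $\mathcal{X}_\be$, and hence also on the closed subspace $\mathcal{X}$. By Lemma~\ref{lem: locally proper full} the operator is semi-Fredholm on both $\mathcal{X}_\be\to\mathcal{Y}_\be$ and $\mathcal{X}\to\mathcal{Y}$, so the whole task is to prove surjectivity; this will simultaneously show the operator is Fredholm of index $0$. I would do this by explicitly producing a preimage.

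The plan is to reduce to a scalar equation on $\Gamma$. Any $\dot w\in\mathcal{X}_\be$ is harmonic in $\mathcal{R}$ and vanishes on $B$, hence is the harmonic extension of its Dirichlet trace on $\Gamma$; this identifies $\mathcal{X}_\be$ with $C^{3+\beta}_\be(\Gamma)\times C^{3+\beta}_\be(\Gamma)$ and $\mathcal{X}$ with the analogous subspace of traces whose first two derivatives vanish at infinity. Writing $T$ for the Dirichlet–Neumann operator of this strip problem, so that $\dot w_{iy}|_\Gamma=T(\dot w_i|_\Gamma)$, the equation $\mathscr{F}_w(0,\alpha)\dot w=(h_1,h_2)$ reads, by \eqref{linearized operator trivial}, as the pair $\gamma\,\dot w_1|_\Gamma+\dot w_2|_\Gamma=h_1$ and $2\big(\dot w_{2y}+(\gamma-1)\dot w_{1y}+(\gamma+\alpha)\dot w_1\big)|_\Gamma=h_2$ on $\Gamma$. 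Solving the first relation for $\dot w_2|_\Gamma$, applying $T$, and substituting into the second, the terms involving $T(\dot w_1|_\Gamma)$ combine and everything collapses to the single equation
\begin{equation*}
  L\big(\dot w_1|_\Gamma\big):=\big(T-(\gamma+\alpha)\big)\big(\dot w_1|_\Gamma\big)=T h_1-\tfrac12 h_2 .
\end{equation*}

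Next I would invert $L$. Separating variables as in \eqref{kernel} shows $T$ is the Fourier multiplier with symbol $k\coth k\ge 1$, so $L$ has symbol $k\coth k-(\gamma+\alpha)$, which — because $\alpha<\alpha_\crit=1-\gamma$ — is bounded below by $\alpha_\crit-\alpha>0$ and grows like $|k|$. Equivalently, $L$ is self-adjoint and positive definite on $L^2(\Gamma)$, with $\langle Lv,v\rangle=\int_{\mathcal{R}}|\nabla\dot w_1|^2-(\gamma+\alpha)\|v\|_{L^2(\Gamma)}^2\ge(\alpha_\crit-\alpha)\|v\|_{L^2(\Gamma)}^2$. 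Hence $L$ is invertible, with $L^{-1}$ an order $-1$ operator, given by convolution with an exponentially decaying kernel since the symbol extends holomorphically and zero-free to a strip about the real axis; thus $L^{-1}\colon C^{2+\beta}_\be(\Gamma)\to C^{3+\beta}_\be(\Gamma)$ and preserves decay at infinity — these mapping properties can also be read off from the Schauder estimate of Lemma~\ref{lem: schauder applied} specialized to $(w,\alpha)=(0,\alpha)$. Setting $\dot w_1|_\Gamma:=L^{-1}(Th_1-\tfrac12 h_2)$, $\dot w_2|_\Gamma:=h_1-\gamma\,\dot w_1|_\Gamma$, and taking harmonic extensions vanishing on $B$ then gives a preimage $\dot w\in\mathcal{X}_\be$, which moreover lies in $\mathcal{X}$ whenever $(h_1,h_2)\in\mathcal{Y}$ because $T$ and $L^{-1}$ preserve decay. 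This yields surjectivity, and the lemma follows.

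The step I expect to be the main obstacle is this last one: the functional-analytic bookkeeping around the multiplier $L$ — verifying its mapping properties on the Hölder and weighted Hölder spaces in play, making the harmonic-extension identification of $\mathcal{X}_\be$ with trace spaces precise, and checking the decay-preservation needed for the $\mathcal{X}\to\mathcal{Y}$ statement. If one prefers a softer argument, one can instead combine semi-Fredholmness (Lemma~\ref{lem: locally proper full}) with triviality of the kernel (Lemma~\ref{lem: trivial kernel}) and the constancy of the Fredholm index along the norm-continuous family $\alpha\mapsto\mathscr{F}_w(0,\alpha)$ on $(-\infty,\alpha_\crit)$, and compute the index at one (say very negative) value of $\alpha$; but this too ultimately reduces to the same explicit inversion of $L$.
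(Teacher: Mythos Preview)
Your explicit Dirichlet--Neumann reduction is correct in outline and your scalar equation $L(\dot w_1|_\Gamma)=Th_1-\tfrac12 h_2$ with $L=T-(\gamma+\alpha)$ is exactly right; the symbol bound $k\coth k-(\gamma+\alpha)\ge\alpha_\crit-\alpha>0$ does give invertibility once the multiplier and extension bookkeeping in H\"older spaces is in place, and you correctly flag this as the main technical burden. The paper, however, avoids this entirely by a different homotopy than the one you sketch at the end. Rather than varying $\alpha$, it introduces $\mathscr L(t)\dot w=(\gamma\dot w_1+\dot w_2,\;2(\dot w_{2y}+(\gamma-1)\dot w_{1y}+t(\gamma+\alpha)\dot w_1))$ for $t\in[0,1]$, scaling the zeroth-order coefficient to zero. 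The dispersion relation becomes $t(\gamma+\alpha)=k\coth k$, so each $\mathscr L(t)$ is injective and (by the same translation argument as Lemma~\ref{lem: locally proper full}) semi-Fredholm; continuity of the index then reduces everything to $\mathscr L(0)$, whose invertibility is a clean off-the-shelf fact (cited from \cite[Theorem~A.8]{cww:globalfronts}) since it has no zeroth-order term. Thus your closing remark that the homotopy route ``ultimately reduces to the same explicit inversion of $L$'' is not quite accurate: the paper's choice of homotopy endpoint is precisely what sidesteps the multiplier analysis you are worried about. For the $\mathcal X\to\mathcal Y$ statement the paper also argues more softly, deducing that the $\mathcal X_\be$-preimage of $\dot f\in\mathcal Y$ actually lies in $\mathcal X$ via a translation argument rather than by tracking decay through $T$ and $L^{-1}$. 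Your route buys an explicit inverse and is arguably more informative; the paper's buys a shorter proof with fewer technical prerequisites.
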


\begin{proof}
	From Lemmas~\ref{lem: trivial kernel} and \ref{lem: locally proper full} we know that $\mathscr{F}_w(0,\alpha)$ has a trivial kernel and is locally proper both $\mathcal{X}_\be\to\mathcal{Y}_\be$ and $\mathcal{X}\to\mathcal{Y}$. In order to prove invertibility $\mathcal{X}_\be\to\mathcal{Y}_\be$ it therefore suffices to check that the operator is Fredholm index $0$. To this end, we introduce the linear operator $\mathscr{L}(t)\colon\mathcal{X}_\be\to\mathcal{Y}_\be$, defined by
	\begin{equation*}
		\mathscr{L}(t)\dot{w}:=
		\begin{pmatrix}
		\gamma\dot{w}_1+\dot{w}_2\\
		2\big(\dot{w}_{2y}+(\gamma-1)\dot{w}_{1y}+(\gamma+\alpha)t\dot{w}_1\big)
		\end{pmatrix},\qquad\text{for }t\in[0,1].
	\end{equation*}
	A variant of the argument in the proof of Lemma~\ref{lem: trivial kernel} shows that $\mathscr{L}(t)$ is injective from $\mathcal{X}_\be$ to $\mathcal{Y}_\be$, provided that $\alpha\in(0,\alpha_\crit)$. Indeed, the dispersion relation \eqref{dispersion relation} becomes
	\begin{equation*}
		t(\gamma+\alpha)=k\operatorname{coth}(k)
	\end{equation*}
	which implies that the kernel is trivial if we have $t(\gamma+\alpha)<1$. Arguing as in Lemma~\ref{lem: locally proper full} we therefore get that $\mathscr{L}(t)\colon\mathcal{X}_\be\to\mathcal{Y}_\be$ is locally proper and hence semi-Fredholm with trivial kernel for all $t\in[0,1]$. Specifically, this implies that the Fredholm index of $\mathscr{L}(t)$ is constant for all $t\in[0,1]$. However, we know that $\mathscr{L}(0)$ is invertible (follows for instance from \cite[Theorem A.8]{cww:globalfronts}) and therefore has Fredholm index $0$. By continuity of the index, this implies that $\mathscr{L}(1)=\mathscr{F}_w(0,\alpha)$ must also have Fredholm index $0$ and therefore, $\mathscr{F}_w(0,\alpha)\colon\mathcal{X}_\be\to\mathcal{Y}_\be$ is invertible. 
	
	Finally, we show that $\mathscr{F}_w(0,\alpha)$ is also invertible $\mathcal{X}\to\mathcal{Y}$. Since $\mathcal{X}\subset\mathcal{X}_\be$, clearly $\mathscr{F}_w(0,\alpha)\colon\mathcal{X}\to\mathcal{Y}$ is injective. In order to show surjectivity, let $\dot{f}\in\mathcal{Y}$. Since $\mathscr{F}_w(0,\alpha)\colon\mathcal{X}_\be\to\mathcal{Y}_\be$ is invertible, there exists a $\dot{w}\in\mathcal{X}_\be$ with $\mathscr{F}_w(0,\alpha)\dot{w}=\dot{f}$. Using a translation argument as in \cite[Lemma A.10]{pressure}, we get that $\dot{w}\in\mathcal{X}$.  
\end{proof}

\section{Uniform regularity}\label{sect: Uniform Regularity}
This section is devoted to the proof of the following proposition, which states that the $C^{3+\beta}$ norms of $\eta$ and $\zeta$ can be controlled in terms of the constant $\delta>0$ in the inequalities
\begin{equation}\label{eqn:delta}
\delta\leq|\nabla\eta|\leq1/\delta\quad\text{and}\quad1-2\alpha(\eta-1)\geq\delta\qquad\text{in }\mathcal{R}.
\end{equation}
Note that the left hand side of \eqref{no stagnation on free surface}, and hence the Lopatinskii constant $\lambda$ in \eqref{eqn:lambda}, can be controlled solely in terms of $\delta$.
\begin{proposition}\label{prop: uniform regularity}
	Suppose that $(\eta,\zeta,\alpha)$ solves \eqref{fullproblem} with $0\leq\alpha\leq\alpha_\crit$ and that \eqref{eqn:delta} holds for some $\delta>0$. Then there exists a positive constant $C=C(\delta)$ such that $\|\eta\|_{C^{3+\beta}(\mathcal{R})}<C$ and $\|\zeta\|_{C^{3+\beta}(\mathcal{R})}<C$.
\end{proposition}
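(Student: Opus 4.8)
\textbf{Proof proposal for Proposition~\ref{prop: uniform regularity}.}

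The plan is to bootstrap regularity starting from the \emph{a priori} bounds \eqref{eqn:delta}, which, as noted, already control the Lopatinskii constant $\lambda$ from below and hence make the linear Schauder estimate of Lemma~\ref{lem: schauder estimates} available with a constant depending only on $\delta$. The difficulty is that \eqref{eqn:delta} only gives $C^1$-type control, whereas the boundary conditions \eqref{kinematic}--\eqref{dynamic} are fully nonlinear and involve first derivatives of $\eta$ and $\zeta$; so a naive application of linear estimates only closes if we can first upgrade $\eta,\zeta$ from $C^1$ to, say, $C^{2+\beta}$ with a $\delta$-dependent constant. My approach is to split this into (i) a bootstrap for $\eta$ alone, using the fact that $\eta$ is harmonic and that on $\Gamma$ the dynamic condition \eqref{dynamic}, after using \eqref{kinematic} to eliminate $\zeta_x = -\gamma\eta\eta_x$, becomes a \emph{scalar} fully nonlinear oblique-derivative condition for $\eta$ of the form $G(\eta,\nabla\eta)=0$; and (ii) a subsequent bootstrap for $\zeta$, which then solves a \emph{linear} problem with coefficients built from the now-controlled $\eta$.

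For step (i): from \eqref{dynamic} and $\zeta_x=-\gamma\eta\eta_x$ on $\Gamma$, we get
\begin{equation*}
  (\zeta_y + \gamma\eta\eta_y)^2 = (1-2\alpha(\eta-1))(\eta_x^2+\eta_y^2) \quad\text{on }\Gamma,
\end{equation*}
and since $\zeta$ is harmonic in $\mathcal{R}$ with $\zeta=-\tfrac12\gamma+1-\tfrac12\gamma\eta^2$ on $\Gamma$ and $\zeta=0$ on $B$, the trace $\zeta_y|_\Gamma$ is recovered from $\eta$ by a (nonlocal, but smoothing) Dirichlet-to-Neumann type operation; alternatively, and more in the spirit of the paper's stated strategy of applying \cite{lieberman} to coupled scalar problems, one treats the pair $(\eta,\zeta)$ together but observes that the principal part decouples. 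Concretely, I would invoke the interior and boundary regularity theory for fully nonlinear elliptic oblique problems (e.g.\ \cite{lieberman}, or the Schauder theory after freezing coefficients using the nondegeneracy supplied by \eqref{eqn:delta} and \eqref{no stagnation on free surface}): the nondegeneracy of the boundary operator is exactly the statement that $\partial G/\partial(\eta_y)\neq 0$ along $\Gamma$, which holds because $\zeta_y+\gamma\eta\eta_y\neq 0$ there (its square equals $(1-2\alpha(\eta-1))|\nabla\eta|^2\geq\delta^3>0$). This yields $\|\eta\|_{C^{2+\beta}(\mathcal{R})}\leq C(\delta)$, and then, differentiating the boundary relation once and applying the \emph{linear} Schauder estimate from Appendix~\ref{app: schauder estimates} to $\partial_x\eta$ (or running another step of the nonlinear theory), $\|\eta\|_{C^{3+\beta}(\mathcal{R})}\leq C(\delta)$. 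Throughout, the uniformity in the unbounded $x$-direction is handled by the translation-invariant structure of the strip $\mathcal{R}$ together with the limiting behaviour at infinity (Remark~\ref{rem:limiting operator}): one obtains a local estimate on each unit window with constant independent of the window, then takes the supremum.

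For step (ii): with $\|\eta\|_{C^{3+\beta}}\leq C(\delta)$ in hand, $\zeta$ solves the \emph{linear} mixed boundary value problem $\Delta\zeta=0$ in $\mathcal{R}$, $\zeta=-\tfrac12\gamma+1-\tfrac12\gamma\eta^2\in C^{3+\beta}(\Gamma)$ on $\Gamma$, $\zeta=0$ on $B$; classical Schauder estimates for the Dirichlet problem in the strip (again applied on unit windows, uniformly in $x$, using the vanishing of $\zeta-(1-\gamma)y$ at infinity) immediately give $\|\zeta\|_{C^{3+\beta}(\mathcal{R})}\leq C(\delta)$. The main obstacle is really step (i): verifying that the fully nonlinear boundary condition for $\eta$ (or the coupled system) is genuinely uniformly oblique/nondegenerate with constants depending only on $\delta$, and organizing the argument so that the nonlocal dependence of $\zeta_y|_\Gamma$ on $\eta$ does not destroy the local character needed to quote \cite{lieberman}. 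This is precisely why the paper emphasizes working with "two carefully chosen coupled scalar problems" rather than the raw elliptic system — the resolution is to pick the scalar equations (for $\eta$ and for $\zeta$, or for $\eta$ and $v$) so that each has a genuinely local oblique boundary condition with the nondegeneracy guaranteed by \eqref{eqn:delta}, and then iterate.
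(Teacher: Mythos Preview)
Your overall strategy — split into scalar problems, apply Lieberman's nonlinear oblique-derivative theory to bootstrap $\eta$, then recover $\zeta$ linearly — matches the paper's. You also correctly identify the central obstacle: the dynamic condition \eqref{dynamic} involves $\zeta_y|_\Gamma$, which depends \emph{nonlocally} on $\eta$, so one cannot directly quote \cite{lieberman} for a scalar equation in $\eta$ alone.

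Where your proposal has a genuine gap is in the resolution of this obstacle. You suggest using $\zeta$ (or $v$) as the auxiliary scalar unknown, but this does not close the first bootstrap step. With $\zeta$: the Dirichlet data $\zeta|_\Gamma = -\tfrac12\gamma + 1 - \tfrac12\gamma\eta^2$ depends on $\eta$, and to get $\zeta_y|_\Gamma \in C^{0,\sigma}$ via Schauder estimates one needs this data in $C^{1+\sigma}$, hence $\eta\in C^{1+\sigma}$ — precisely what you are trying to prove. (Your aside that the Dirichlet-to-Neumann map is ``smoothing'' is incorrect: it is of order one and loses a derivative, so Lipschitz data does not yield H\"older normal derivative.) The argument is therefore circular at the step from $C^1$ to $C^{1+\sigma}$.

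The paper's key idea, which you are missing, is to pass to the conformal stream function $\psi = \zeta + \tfrac\gamma2\eta^2$. This satisfies $\Delta\psi = \gamma|\nabla\eta|^2$ with \emph{constant} Dirichlet data $\psi=1-\tfrac12\gamma$ on $\Gamma$ and $\psi=0$ on $B$. The $\eta$-dependence is thus moved from the boundary into the interior forcing, where the a~priori bound $|\nabla\eta|\le 1/\delta$ alone (an $L^\infty$ bound on the right-hand side) already yields $\psi\in C^{1+\sigma}$ by standard linear theory, with constant depending only on $\delta$. Only then does the boundary condition for $\eta$, rewritten as $(1-2\alpha(\eta-1))|\nabla\eta|^2 = \psi_y^2$, become a genuinely local nonlinear oblique condition with $\psi_y\in C^{0,\sigma}$ known, and Lieberman's Theorem~1 applies to give $\eta\in C^{1+\sigma}$. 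The remaining bootstrap (Lemmas~\ref{lem: uniform reg 2} and~\ref{lem: lemma 3}) then proceeds essentially as you outline.
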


In order to prove this proposition, we split \eqref{fullproblem} into the following two coupled scalar problems to which we will apply the classical results from \cite{lieberman}. It will be more convenient to do so whilst working with $\psi$, the stream function in conformal variables, rather than with $\zeta$. Recall that
\begin{equation}\label{definition w}
  \psi(x,y)=:\zeta(x,y)+\tfrac{\gamma}{2}\eta^2(x,y).
\end{equation}
Specifically, we consider \eqref{harmonic zeta}--\eqref{kinematic} together with \eqref{bottom zeta} as a problem for $\psi$ with fixed $\eta$,
\begin{subequations}\label{w problem}
	\begin{alignat}{2}
	\label{w interior}
	\Delta \psi&=\gamma|\nabla\eta|^2&\qquad&\text{in }\mathcal{R},\\
	\label{w surface}
	\psi&=1-\tfrac{1}{2}\gamma&\qquad&\text{on }\Gamma,	\\
	\psi&=0&\qquad&\text{on }B,
	\end{alignat}
\end{subequations}
and \eqref{harmonic eta} combined with \eqref{dynamic}--\eqref{bottom eta} as a problem for $\eta$ with fixed $\psi$,
\begin{subequations}\label{eta problem}
	\begin{alignat}{2}
	\Delta\eta&=0&\qquad&\text{in }\mathcal{R},\\
	\label{eta surface}
	(1-2\alpha(\eta-1))|\nabla\eta|^2&=\psi_y^2&\qquad&\text{on }\Gamma,\\
	\eta&=0&\qquad&\text{on }B.
	\end{alignat}
\end{subequations}

We use these two problems to perform a bootstrap argument for which the key ingredient is applying \cite[Theorems 1 and 3]{lieberman} to the scalar problem \eqref{eta problem}. To this end, we reformulate the boundary condition \eqref{eta surface} as 
\begin{equation*}
 G(x,\eta,D\eta;\alpha)=0\quad\text{on }\Gamma,
\end{equation*}
where
\begin{equation*}
 G(x,z,p;\alpha):=|p|^2-\frac{\psi_y^2(x,0)}{1-2\alpha(z-1)}.
\end{equation*} 
Notice that when restricted to the set
\begin{equation*}\label{O delta}
\mathcal{O}_\delta=\big\{((x,y),z,p)\in\mathcal{R}\times\mathbb{R}\times\mathbb{R}^2:\delta<|p|<1/\delta,\;1-2\alpha(z-1)>\delta,\;y\in(\tfrac 12,1) \big\},
\end{equation*} 
$G$ is smooth and satisfies the inequality
\begin{equation}\label{Gp}
|G_p(x,z,p;\alpha)|>2\delta.
\end{equation}
The proof of Proposition~\ref{prop: uniform regularity} consists of the combination of the following three lemmas.

\begin{lemma}\label{lem: lemma 1}
	For any $\delta>0$,  there exist constants $C>0$ and $\sigma>0$ such that a solution $(\eta,\zeta,\alpha)$ to \eqref{fullproblem} satisfying \eqref{eqn:delta} and $0\leq\alpha\leq\alpha_\crit$ also satisfies $\|\eta\|_{C^{1+\sigma}(\mathcal{R})}<C$. 
\end{lemma}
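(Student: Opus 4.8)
The plan is to establish a uniform $C^{1+\sigma}$ bound on $\eta$ by treating the scalar boundary-value problem \eqref{eta problem} and invoking the De~Giorgi--Nash--Moser-type boundary regularity results for quasilinear oblique problems in \cite[Theorem~1]{lieberman}. The crucial structural observations are: first, by \eqref{eqn:delta} the coefficient $\psi_y^2(x,0)$ appearing in the reformulated boundary condition $G(x,\eta,D\eta;\alpha)=0$ is itself controlled. Indeed, $\psi$ solves the \emph{linear} Dirichlet problem \eqref{w problem} with right-hand side $\gamma|\nabla\eta|^2$ which, by \eqref{eqn:delta}, is bounded in $C^0(\mathcal{R})$ by a constant depending only on $\delta$; standard interior and boundary $W^{2,p}$ (or Schauder, after a first crude estimate) estimates for the Laplacian then give a bound on $\|\psi\|_{C^{1+\sigma_0}(\mathcal{R})}$, hence on $\|\psi_y(\cdot,0)\|_{C^{\sigma_0}(\Gamma)}$, depending only on $\delta$. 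In particular $G$ is, on the relevant region $\mathcal{O}_\delta$, a smooth function with $|G_p|>2\delta$ by \eqref{Gp}, i.e.\ the boundary condition is uniformly oblique with data of controlled modulus of continuity.

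The second observation is that $\eta$ is harmonic and bounded: from \eqref{eqn:delta} together with $\eta=0$ on $B$ and $\eta\to y$ at infinity (via \eqref{tilde regularity}), the gradient bound $|\nabla\eta|\le 1/\delta$ integrates to a uniform $C^0$ bound $\|\eta\|_{C^0(\mathcal{R})}\le C(\delta)$. So on $\mathcal{R}$ we have a bounded harmonic function; interior gradient estimates are immediate, and the only issue is boundary regularity at $\Gamma$ and at $B$. At $B$ this is trivial since $\eta=0$ there (reflect across $B$). At $\Gamma$ we apply \cite[Theorem~1]{lieberman} to the problem $\Delta\eta=0$ in a boundary neighborhood with the oblique condition $G(x,\eta,D\eta;\alpha)=0$ on $\Gamma$: the hypotheses of that theorem — boundedness of $\eta$, smoothness and uniform obliqueness of $G$, and a controlled modulus of continuity of the $x$- and $z$-dependence of $G$ — are all furnished by the preceding paragraph and by \eqref{eqn:delta}, \eqref{Gp}. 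This yields $\|\eta\|_{C^{1+\sigma}(\Gamma\cap K)}\le C(\delta)$ on compact pieces, with $\sigma$ and $C$ depending only on $\delta$; since all the relevant constants are translation-invariant in $x$ (the problem \eqref{eta problem} has coefficients with $x$-dependence only through $\psi_y(\cdot,0)$, whose $C^{\sigma_0}$ norm is uniformly bounded), a covering argument patches these local estimates into the global bound $\|\eta\|_{C^{1+\sigma}(\mathcal{R})}<C$.

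The main obstacle is the first step: controlling $\psi_y$ on $\Gamma$ purely in terms of $\delta$, since a priori $\psi$ is only known to be harmonic-with-bounded-forcing and one needs a boundary gradient estimate \emph{near $\Gamma$}, not near $B$ where $\psi_y$ is actually evaluated in the boundary operator. The resolution is that $\psi_y(x,0)$ on $B$ is the relevant quantity, and $\psi$ satisfies a Dirichlet condition on \emph{both} $\Gamma$ and $B$, so boundary gradient estimates for the Poisson equation with $C^0$ right-hand side and constant Dirichlet data apply cleanly at $B$ after reflection; the forcing $\gamma|\nabla\eta|^2$ is merely bounded, but that suffices to get $\psi\in C^{1+\sigma_0}$ up to $B$ with norm controlled by $\delta$ (e.g.\ via $W^{2,p}$ estimates and Sobolev embedding). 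One must be slightly careful that this first estimate uses only $\|\nabla\eta\|_{C^0}\le 1/\delta$ and not any higher norm of $\eta$, so that there is no circularity in the bootstrap; this is exactly the point of isolating \eqref{w problem} as a problem with \emph{fixed} $\eta$.
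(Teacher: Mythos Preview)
Your proposal is essentially correct and follows the same two-step strategy as the paper: first obtain a $C^{1+\sigma_0}$ bound on $\psi$ from the linear Dirichlet problem \eqref{w problem} with merely bounded right-hand side $\gamma|\nabla\eta|^2$, then feed this into \cite[Theorem~1]{lieberman} applied to the scalar problem \eqref{eta problem} to get $\eta\in C^{1+\sigma}$. The paper's proof is nearly identical, citing \cite[Theorem~8.33]{GT} for the first step (a direct global $C^{1,\alpha}$ estimate) rather than your $W^{2,p}$-then-embed route, and \cite[Theorem~8.29]{GT} for the bottom boundary.

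There is one point of confusion worth flagging. You have read the expression $\psi_y^2(x,0)$ in the definition of $G$ literally as the value on $B$, and your final paragraph is built around estimating $\psi_y$ at $y=0$ via reflection across $B$. But the boundary condition \eqref{eta surface} lives on $\Gamma$, where $y=1$; the relevant quantity is $\psi_y|_\Gamma$, and the ``$(x,0)$'' in the paper is a typo (or a local coordinate in which the boundary sits at $0$). Your sentence ``hence on $\|\psi_y(\cdot,0)\|_{C^{\sigma_0}(\Gamma)}$'' is internally inconsistent for the same reason. Fortunately this does not damage the argument: you already claim the global bound $\|\psi\|_{C^{1+\sigma_0}(\mathcal{R})}\le C(\delta)$, which controls $\psi_y$ on $\Gamma$ as well, so the input to Lieberman's theorem is secured. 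Just delete the last paragraph's digression about reflection at $B$ and instead note that the global $\psi$ estimate already handles $\psi_y|_\Gamma$.
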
 
\begin{proof}
	Applying \cite[Theorem 8.33]{GT} to \eqref{w problem} yields, for instance $\|\psi\|_{C^{1+1/2}(\mathcal{R})}<C$. Applying \cite[Theorem 1]{lieberman} to \eqref{eta problem} (see comment after \cite[Theorem 2]{lieberman}), along with \cite[Theorem 8.29]{GT} for the bottom boundary, concludes the proof. 
\end{proof}
\begin{lemma}\label{lem: uniform reg 2}
	For any $\delta>0$, $\ep>0$ and $K>0$ there exist constants $C>0$ and $\sigma>0$  such that any solution $(\eta,\zeta,\alpha)$ to \eqref{fullproblem} satisfying $0\leq\alpha\leq\alpha_\crit$, \eqref{eqn:delta} and $\|\eta\|_{C^{1+\ep}(\mathcal{R})}<K$, also satisfies $\|\eta\|_{C^{2+\sigma}(\mathcal{R})}<C$. 
\end{lemma}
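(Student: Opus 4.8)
The plan is a one-directional bootstrap through the two decoupled scalar problems \eqref{w problem} and \eqref{eta problem}. The hypothesis $\|\eta\|_{C^{1+\ep}(\mathcal{R})}<K$ enters first through the \emph{linear} Poisson problem \eqref{w problem} for $\psi$, and the resulting control of the surface trace $\psi_y|_\Gamma$ is then fed into the nonlinear oblique boundary value problem \eqref{eta problem} for $\eta$, to which the regularity theory of \cite{lieberman} applies. Throughout, $\gamma$ (and hence $\alpha_\crit$) is fixed, so constants may depend on it tacitly.

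First I would estimate $\psi$. Since $\nabla\eta$ is bounded by $1/\delta$ by \eqref{eqn:delta} and has local $C^\ep$ seminorm at most $K$, the right-hand side $\gamma|\nabla\eta|^2$ of \eqref{w interior} is bounded in $C^\ep(\mathcal{R})$ in terms of $\delta$ and $K$, while the boundary datum in \eqref{w surface} is constant and $\partial\mathcal{R}$ is flat. Combining the interior and boundary Schauder estimates of \cite{GT} with the translation invariance of the strip $\mathcal{R}$ (as in Lemma~\ref{lem: schauder estimates}), and bounding $\|\psi\|_{C^0(\mathcal{R})}$ by the maximum principle, gives a global estimate $\|\psi\|_{C^{2+\ep}(\mathcal{R})}\le C(\delta,K)$. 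In particular the trace of $\psi_y$ on $\Gamma$, as a function of $x$, is bounded in $C^{1+\ep}$ in terms of $\delta$ and $K$.

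Next I would run the regularity theory for \eqref{eta problem}, with its boundary condition \eqref{eta surface} written as $G(x,\eta,D\eta;\alpha)=0$ on $\Gamma$, $G$ as above. By the previous paragraph $G$ is of class $C^{1+\ep}$ in $x$ and smooth in $(z,p)$, and on $\mathcal{O}_\delta$ it satisfies both $1-2\alpha(z-1)>\delta$ and the obliqueness bound \eqref{Gp}; moreover, by \eqref{eqn:delta}, near $\Gamma$ the pair $(\eta,D\eta)$ takes values in $\mathcal{O}_\delta$. Applying \cite[Theorem 3]{lieberman} in balls of a fixed radius centred on $\Gamma$ then yields a uniform estimate $\|\eta\|_{C^{2+\sigma}}\le C$ near $\Gamma$ for some $\sigma>0$ depending only on $\delta$, $\ep$ and $K$; crucially, the obliqueness constant $2\delta$, the $C^{1+\ep}$ norms of $G$ and of $\eta$, and the set $\mathcal{O}_\delta$ in which $(\eta,D\eta)$ lives are all independent of the centre of the ball, so the estimate is uniform along $\Gamma$. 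Away from $\Gamma$ the function $\eta$ is harmonic with homogeneous Dirichlet data on the flat boundary $B$, hence smooth there with uniform bounds (interior estimates together with \cite[Theorem 8.29]{GT}). Patching these local estimates with a partition of unity of bounded overlap gives $\|\eta\|_{C^{2+\sigma}(\mathcal{R})}<C$ (and, via \eqref{definition w}, a corresponding bound for $\zeta$, which is not needed here).

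The hard part will be the clean application of the nonlinear oblique boundary regularity of \cite{lieberman}: one must verify that $G$ meets its structure hypotheses — obliqueness, adequate regularity in all arguments, and above all that the solution $(\eta,D\eta)$ never leaves the set $\mathcal{O}_\delta$ on which $G$ is smooth and oblique — and then check that the resulting Schauder-type estimate is genuinely uniform along the unbounded surface $\Gamma$. The other ingredients (global Schauder for the linear problem \eqref{w problem}, interior regularity where $\eta$ is merely harmonic, and the gluing) are routine.
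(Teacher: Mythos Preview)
Your proposal is correct and follows essentially the same two-step bootstrap as the paper: first Schauder estimates for the linear Poisson problem \eqref{w problem} give $\|\psi\|_{C^{2+\ep}}\le C$, and then \cite[Theorem~3]{lieberman} applied to \eqref{eta problem} yields $\|\eta\|_{C^{2+\sigma}}\le C$. The only minor difference is in how the technicality that \cite[Theorem~3]{lieberman} is stated under a \emph{global} obliqueness hypothesis is handled: you propose localizing to balls along $\Gamma$ where $(\eta,D\eta)\in\mathcal{O}_\delta$, whereas the paper instead modifies the construction of the extension $\overline G$ in \cite[Lemma~2]{lieberman} by mollifying in $x$ alone, which preserves the required structure while allowing the theorem to be applied directly.
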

\begin{proof}
	In what follows, $C$ and $\sigma$ will denote positive constants which depend on $K$, $\delta$ and $\ep$ but can vary from line to line.	Let us now consider \eqref{w problem}. Notice that the $C^{\ep}$ norm of the right hand side of \eqref{w interior} is controlled by $C$. Applying basic Schauder estimates then yields $\|\psi\|_{C^{2+\ep}(\mathcal{R})}<C$. Returning to \eqref{eta problem}, we therefore obtain that the $C^{1+\ep}$ norm of the right hand side of \eqref{eta surface} is controlled by $C$. In order to show that this implies $\|\eta\|_{C^{2+\sigma}(\mathcal{R})}<C$, we wish to apply \cite[Theorem 3]{lieberman}. While this theorem is stated for problems where \eqref{Gp} holds globally, this restriction can be overcome, for instance by constructing the extension $\overline G$ of $G$ in \cite[Lemma 2]{lieberman} using mollifications in $x$ alone. Hence, \cite[Theorem 3]{lieberman}, combined with \cite[Theorem 8.29]{GT} for the bottom boundary, yields $\|\eta\|_{C^{2+\sigma}(\mathcal{R})}<C$. 
\end{proof} 
The last step is more routine and relies only on linear Schauder estimates. 
\begin{lemma}\label{lem: lemma 3}
	For any $\delta>0$, $\ep\in(0,\beta]$ and $K>0$ there exists a constant $C>0$ depending only on $K,\ep$ and $\delta$ such that a solution $(\eta,\zeta,\alpha)$ to \eqref{fullproblem}, satisfying $0\leq\alpha\leq\alpha_\crit$, \eqref{eqn:delta} and $\|\eta\|_{C^{2+\ep}(\mathcal{R})}<K$ also satisfies $\|\eta\|_{C^{3+\beta}(\mathcal{R})}<C$ and $\|\zeta\|_{C^{3+\beta}(\mathcal{R})}<C$. 
\end{lemma}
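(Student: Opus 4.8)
The plan is to bootstrap once more, now using only linear Schauder estimates, starting from the assumption $\|\eta\|_{C^{2+\ep}(\mathcal{R})}<K$. First I would feed this bound into the scalar problem \eqref{w problem} for $\psi$: since $|\nabla\eta|^2 \in C_\be^{1+\ep}(\overline{\mathcal{R}})$ with norm controlled by $K$, the interior data $\gamma|\nabla\eta|^2$ lies in $C_\be^{1+\ep}$, the boundary data on $\Gamma$ and $B$ are constants, and so interior and boundary Schauder estimates (e.g.\ \cite[Theorem 6.6]{GT} together with the version for the flat boundaries $\Gamma$ and $B$, applied on a covering of $\overline{\mathcal{R}}$ by unit balls as in Appendix~\ref{app: schauder estimates}) yield $\|\psi\|_{C^{3+\ep}(\mathcal{R})} \le C(K,\ep,\delta)$. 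Recalling \eqref{definition w}, this immediately gives $\|\zeta\|_{C^{3+\ep}(\mathcal{R})} \le C$ as well, and in particular $\psi_y|_\Gamma \in C_\be^{2+\ep}(\Gamma)$ with controlled norm.

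Next I would return to the problem \eqref{eta problem} for $\eta$, but now exploit that with $\eta \in C^{2+\ep}$ already known, the boundary condition \eqref{eta surface} may be treated as a \emph{linear} oblique problem for $\dot\eta = \eta$ rather than a fully nonlinear one. Concretely, writing \eqref{eta surface} as $G(x,\eta,D\eta;\alpha)=0$ with $G$ as defined before \eqref{O delta}, and differentiating tangentially, one sees that $\eta$ satisfies a linear boundary condition whose coefficients ($G_p$ evaluated along the known solution, together with $G_z$ and the now-$C^{2+\ep}$ function $\psi_y^2(x,0)$) lie in $C_\be^{1+\ep}(\Gamma)$, with norms controlled by $K$ and $\delta$. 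The obliqueness of this linear condition is precisely the lower bound $|G_p| > 2\delta$ from \eqref{Gp}, which holds on $\mathcal{O}_\delta$ by virtue of \eqref{eqn:delta}. Since $\eta$ is harmonic, with homogeneous Dirichlet data on $B$, the linear Schauder estimate for oblique problems (again \cite{GT}, or the scalar case of Lemma~\ref{lem: schauder estimates}) upgrades $\eta$ from $C^{2+\ep}$ to $C^{3+\ep}(\mathcal{R})$ with a bound depending only on $K,\ep,\delta$. If $\ep < \beta$ one repeats this last step once more — the $C^{3+\ep}$ bound on $\eta$ feeds back into \eqref{w problem} to give $\psi,\zeta \in C^{3+\beta}$, which in turn makes the boundary data in the linearized \eqref{eta surface} lie in $C_\be^{2+\beta}$, yielding finally $\|\eta\|_{C^{3+\beta}(\mathcal{R})} < C$ and $\|\zeta\|_{C^{3+\beta}(\mathcal{R})} < C$.

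The main obstacle is bookkeeping rather than anything deep: one must be careful that all Schauder constants depend only on the allowed quantities $\delta,\ep,K$ (and the fixed $\gamma$, $\beta$), uniformly in the unbounded strip $\mathcal{R}$. This uniformity is handled exactly as in the interior/boundary estimates of Appendix~\ref{app: schauder estimates}: cover $\overline{\mathcal{R}}$ by balls of fixed radius, apply the local estimates with the flat boundary pieces $\Gamma$ and $B$ requiring no coordinate straightening, and sum, using that \eqref{eqn:delta} controls the ellipticity and obliqueness constants uniformly. A secondary point is the loss of $x$-derivatives when differentiating \eqref{eta surface} tangentially; since $\Gamma$ is flat this is simply differentiation in $x$, and because $\psi_y^2(x,0)$ is already in $C^{2+\ep}$ no extra regularity of $\psi$ beyond what \eqref{w problem} supplies is needed. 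With these care points observed, the three lemmas combine to give Proposition~\ref{prop: uniform regularity}.
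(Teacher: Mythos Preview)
Your approach differs from the paper's in a substantive way: you differentiate the scalar problem \eqref{eta problem} tangentially and apply a scalar Schauder estimate to $\eta_x$, whereas the paper differentiates the full system $\mathscr{F}(w,\alpha)=0$ and applies the system Schauder estimate from Lemma~\ref{lem: schauder estimates} to $\phi:=w_x$ (using that the Lopatinskii constant \eqref{eqn:lambda} is controlled by $\delta$), then recovers the $y$-derivatives from $\Delta w=0$. Both strategies can be made to work, but yours hides a subtlety that is central to the paper.

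The delicate point is the claim that ``the obliqueness of this linear condition is precisely the lower bound $|G_p|>2\delta$''. In the classical sense of \cite{GT}, obliqueness of $a\partial_x+b\partial_y$ requires the \emph{normal} component $b$ to be bounded away from zero; here that means $\eta_y\neq0$ on $\Gamma$. For overhanging waves, however, the physical surface becomes vertical and $\eta_y$ vanishes there --- allowing precisely this is the whole point of the paper --- so the reference to \cite{GT} is misleading and those estimates do not apply. What \emph{does} hold is the Lopatinskii (complementing) condition for the Laplacian with a first-order boundary operator, which is merely $(a,b)\neq0$; this is exactly your $|G_p|>0$, and it is the correct hypothesis for an ADN-type estimate (a genuine ``scalar case of Lemma~\ref{lem: schauder estimates}''). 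If you invoke the latter, your bootstrap goes through; if you lean on classical obliqueness from \cite{GT}, it fails precisely in the overhanging regime. The paper's system route avoids the ambiguity altogether.

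One minor bookkeeping slip: after the first step you have $\psi\in C^{3+\ep}$ but still only $\eta\in C^{2+\ep}$, hence $\eta^2\in C^{2+\ep}$ and $\zeta=\psi-\tfrac\gamma2\eta^2\in C^{2+\ep}$, not $C^{3+\ep}$ as you claim. This is harmless since you do not use the sharper bound on $\zeta$ until after upgrading $\eta$; the paper also records only $\|\zeta\|_{C^{2+\ep}}<C$ at that stage.
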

\begin{proof}
	Once again, we let $C$ denote a positive constant which depends on $K$, $\ep$ and $\delta$ but can vary from line to line. Since $\|\eta\|_{C^{2+\ep}(\mathcal{R})}<K$ by assumption, we get $\|\psi\|_{C^{3+\ep}(\mathcal{R})}<C$ by considering the elliptic problem \eqref{w problem}. From \eqref{definition w} we therefore immediately get $\|\zeta\|_{C^{2+\ep}(\mathcal{R})}<C$. In particular, $\|w\|_{C^{2+\ep}(\mathcal{R})}<C$ for $w=(w_1,w_2)$ defined as in \eqref{tilde}. 
	
	We now differentiate $\mathscr{F}(w,\alpha)=0$ (see \eqref{operator F}) with respect to $x$ and see that $\phi:=w_x$ is a pair of $C^{2+\ep}$ harmonic functions solving $\mathscr{F}_w(w,\alpha)\phi=0$ and $\phi=0$ on $B$. As in Lemma~\ref{lem: schauder applied}, the Schauder estimates from Lemma~\ref{lem: schauder estimates} in Appendix~\ref{app: schauder estimates} now yield $\|w_{x}\|_{C^{2+\ep}(\mathcal{R})}<C$, where here we have used that the Lopatinskii constant $\lambda$ in \eqref{eqn:lambda} is controlled by $\delta$.
	
	Finally, solving $\Delta w=0$ for $\partial_{yy}w$ we get $\|w\|_{C^{3+\ep}(\mathcal{R})}<C$. In particular, $\|w\|_{C^{2+\beta}(\mathcal{R})}<C$ . We can therefore repeat the above arguments with $\ep=\beta$. The definitions of $w_1$ and $w_2$ in \eqref{tilde} then yield $\|\eta\|_{C^{3+\beta}(\mathcal{R})}<C$ and $\|\zeta\|_{C^{3+\beta}(\mathcal{R})}<C$, as desired. \end{proof}

Proposition~\ref{prop: uniform regularity} follows easily from combining Lemmas~\ref{lem: lemma 1}, \ref{lem: uniform reg 2} and \ref{lem: lemma 3}. 

Finally, we conclude this section by proving the following result which will be useful in Section~\ref{sect: Existence Results} to winnow out alternatives. 

\begin{proposition}\label{prop: gamma cases}
	Assume that $\psi$ as defined in \eqref{definition w} solves \eqref{w problem} and that $(\eta,\zeta,\alpha)$ solve \eqref{fullproblem} with $0\leq\alpha\leq\alpha_\crit$. Then we have the following bounds:
	\begin{enumerate}[label=\rm(\roman*)]
		\item \label{gamma negative} if $\gamma\le 0$ then $\psi_y<1-\tfrac{1}{2}\gamma$ on $\Gamma$,
		\item \label{gamma positive} if $\gamma\geq0$ then $\psi_y>\min\big\{2-\gamma,\gamma\inf_{\mathcal{R}}|\nabla\eta|^2 \big\}$ on $\Gamma$.
	\end{enumerate}
\end{proposition}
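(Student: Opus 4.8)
The plan is to prove both parts by comparing the solution $\psi$ of \eqref{w problem} with an explicit one-dimensional barrier. For $\mu\in\mathbb{R}$ set
$\overline\psi_\mu(y):=\tfrac12\mu y^2+(1-\tfrac12\gamma-\tfrac12\mu)y$, which by construction satisfies $\overline\psi_\mu=0$ on $B$, $\overline\psi_\mu=1-\tfrac12\gamma$ on $\Gamma$, $\Delta\overline\psi_\mu=\mu$, and $\partial_y\overline\psi_\mu=1-\tfrac12\gamma+\tfrac12\mu$ on $\Gamma$. Thus $\psi$ and $\overline\psi_\mu$ always carry the same Dirichlet data, so a one-sided comparison $\psi\le\overline\psi_\mu$ or $\psi\ge\overline\psi_\mu$ in $\mathcal{R}$, together with the Hopf boundary-point lemma applied to the difference, will immediately produce a one-sided bound for $\psi_y$ on $\Gamma$. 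Throughout we use that, by \eqref{tilde regularity}, $\psi\to(1-\gamma)y+\tfrac12\gamma y^2$ uniformly as $|x|\to\infty$, so every difference below is bounded on $\mathcal{R}$ with a known limit at infinity, and the maximum principle on the unbounded strip is available via a standard Phragmén--Lindelöf argument.

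For part \ref{gamma negative} we take $\mu=0$, so $\overline\psi_0=(1-\tfrac12\gamma)y$. Since $\gamma\le0$, the function $v:=\psi-\overline\psi_0$ is superharmonic because $\Delta v=\gamma|\nabla\eta|^2\le0$; it vanishes on $\partial\mathcal{R}$, and as $|x|\to\infty$ it tends to $\tfrac12\gamma\,y(y-1)\ge0$. Hence $v\ge0$ in $\mathcal{R}$, with equality on $\Gamma$, and the Hopf lemma gives $\partial_y v<0$ on $\Gamma$, i.e.\ $\psi_y<1-\tfrac12\gamma$, unless $v\equiv0$; but $v\equiv0$ forces $\gamma|\nabla\eta|^2\equiv0$, which is impossible when $\gamma<0$ since $|\nabla\eta|\ne0$.

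For part \ref{gamma positive} write $\kappa:=\inf_{\mathcal{R}}|\nabla\eta|^2$, and note $\kappa\le1$ because $|\nabla\eta|^2\to1$ at infinity. Take $\mu=\gamma\kappa$; then, since $\gamma\ge0$ and $|\nabla\eta|^2\ge\kappa$, the function $v:=\psi-\overline\psi_{\gamma\kappa}$ is subharmonic, $\Delta v=\gamma(|\nabla\eta|^2-\kappa)\ge0$; it vanishes on $\partial\mathcal{R}$ and, as $|x|\to\infty$, tends to $\tfrac12\gamma(1-\kappa)\,y(y-1)\le0$. Hence $v\le0$ in $\mathcal{R}$, with equality on $\Gamma$, and the Hopf lemma gives $\partial_y v>0$ on $\Gamma$, that is $\psi_y>1-\tfrac12\gamma+\tfrac12\gamma\kappa=\tfrac12\big((2-\gamma)+\gamma\kappa\big)$, unless $v$ is constant (which as before only happens for a trivial solution). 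Since the average of $2-\gamma$ and $\gamma\kappa$ is at least their minimum, this yields $\psi_y>\min\{2-\gamma,\,\gamma\inf_{\mathcal{R}}|\nabla\eta|^2\}$, as claimed.

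The main obstacle is the choice of barrier in part \ref{gamma positive}: there $\psi$ is merely subharmonic, so the naive linear comparison ($\mu=0$) only gives $\psi_y>1-\tfrac12\gamma$, which is strictly weaker than the asserted bound; the fix is the $\kappa$-dependent parabolic barrier, and verifying that it dominates $\psi$ at infinity is precisely the step that uses $\inf_{\mathcal{R}}|\nabla\eta|^2\le1$. The remaining technical point, the maximum principle on the unbounded strip, is routine since every function involved is bounded with an explicit limit as $x\to\pm\infty$.
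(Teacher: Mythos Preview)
Your proof is correct and uses the same comparison-plus-Hopf strategy as the paper. Part~(i) is identical. In part~(ii) the paper instead subtracts the barrier $My^2$ with $M=\min\{1-\tfrac12\gamma,\tfrac12\gamma\kappa\}$, so that the difference has nonzero but nonnegative Dirichlet value on $\Gamma$; your barrier $\overline\psi_{\gamma\kappa}$ matches the Dirichlet data on both boundary components, which makes the comparison cleaner and in fact yields the sharper intermediate estimate $\psi_y>\tfrac12\bigl((2-\gamma)+\gamma\kappa\bigr)$, the arithmetic mean rather than the minimum. The only extra ingredient your version needs is the observation $\kappa\le1$ (to get the correct sign at infinity), which you supply. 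One small inaccuracy, not affecting the argument: your remark that the naive linear comparison $\mu=0$ gives a bound ``strictly weaker than the asserted bound'' is not true for all parameter values---since $\min\{2-\gamma,\gamma\kappa\}=\gamma\kappa$ whenever $\gamma(1+\kappa)\le2$, the naive bound $1-\tfrac12\gamma$ already beats $\gamma\kappa$ when $\gamma(\tfrac12+\kappa)<1$.
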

\begin{proof}	 
	We will treat the two cases separately. Let us begin by assuming $\gamma\leq0$ and defining
	\begin{equation*}
	\tilde{\psi}=\psi-\big(1-\tfrac{1}{2}\gamma\big)y.
	\end{equation*}
	It is easy to see that $\tilde{\psi}$ solves
	\begin{subequations}
		\begin{alignat*}{2}
		\Delta\tilde{\psi}&=\gamma|\nabla\eta|^2&\qquad&\text{in }\mathcal{R},\\
		\tilde{\psi}&=0&\qquad&\text{on }B,\\
		\tilde{\psi}&=0&\qquad&\text{on }\Gamma.
		\end{alignat*}
	\end{subequations}
	Since $\gamma\leq0$, by the strong minimum principle $\psi$ must attain its minimum on the boundary. In particular, applying the Hopf boundary point lemma at any point on the boundary, we get
	\begin{equation*}\label{w_y bound}
	\tilde{\psi_y}=\psi_y-1+\tfrac{1}{2}\gamma<0,
	\end{equation*}
	which is exactly \ref{gamma negative}.
	
	We now turn to the case $\gamma\geq0$. Let us denote
	\begin{equation*}
	\bar{\psi}=\psi-My^2,
	\end{equation*} 
	where
	\begin{equation*}
	M=\min\big\{1-\tfrac{1}{2}\gamma,\tfrac{1}{2}\gamma\inf_{\mathcal{R}}|\nabla\eta|^2 \big\}.
	\end{equation*}
	Notice that since the assumption $0\leq\alpha\leq\alpha_\crit$ implies that $0<1-\gamma$, the constant $M$ is always positive. Moreover, $\bar{\psi}$ solves
	\begin{subequations}
		\begin{alignat*}{2}
		\Delta\bar{\psi}&=\gamma|\nabla\eta|^2-M&\qquad&\text{in }\mathcal{R},\\
		\bar{\psi}&=0&\qquad&\text{on }B,\\
		\bar{\psi}&=1-\tfrac{1}{2}\gamma-M&\qquad&\text{on }\Gamma.
		\end{alignat*}
	\end{subequations}
	Using the assumption $\gamma>0$ and the definition of $M$, the strong maximum principle yields that $\bar{\psi}$ is maximized at any point on the boundary. At any such point, the Hopf boundary point lemma tells us that
	\begin{equation*}
	\bar{\psi}_y=\psi_y-2M>0,
	\end{equation*} 
	proving \ref{gamma positive}.
\end{proof}

\section{Existence results}\label{sect: Existence Results}
In this section, we prove the existence of large-amplitude solitary wave solutions to \eqref{fullproblem}. We begin by constructing a curve of  small-amplitude solutions $\mathscr{C}_\text{loc}$ using center manifold theory. We then extend this local curve to a global curve $\mathscr{C}$ using an argument based on real-analytic global bifurcation theory. 
\subsection{Small-amplitude theory}\label{subsect: small-amplitude}
This subsection is devoted to the construction of small-amplitude waves. For solitary waves the linearized operator at the bifurcation point, $\mathscr{F}_w(0,\alpha_\crit)$, is not Fredholm. Therefore, the Crandall--Rabinowitz local bifurcation theorem used in \cite{cs:exact} and \cite{walsh:stratified} for periodic waves cannot be applied. Instead, we use a center manifold approach. Customarily, the idea is to treat the spatial $x$ variable as a variable of time, thus reformulating the problem into an evolution equation. One then constructs a two-dimensional center manifold controlled by a two-dimensional reduced equation. The homoclinic orbits of this equation are our desired small-amplitude solutions. This is the strategy employed in \cite{gw, paper, strat}. The major disadvantage of this approach is that, due to the nonlinear boundary conditions, many tedious changes of variables need to be performed to obtain the reduced equation. This would be particularly unpleasant given our elliptic system setting. Recently however, a new center manifold reduction theorem was derived in \cite{chen2019center}, which we will use instead. On the one hand, this provides  us with a comparatively simple method for obtaining the reduced ODE. On the other, it also allows us to choose a suitable projection (or, linear relationship between our original $w$ and the variable governed by the reduced equation) so that this ODE relates transparently to the original problem. 

To begin with, since we want a reduced ODE with a Froude number very close to the critical Froude number,  we set
\begin{equation*}
\alpha=\alpha^\ep:=\alpha_\crit-\ep
\end{equation*}
where $\ep>0$ is small and $\alpha_\crit$ is defined in \eqref{alpha crit}.
The main result of this subsection is the following.
\begin{theorem}\label{thm: small-amplitude}
	There exists a continuous one-parameter curve  
	\begin{equation*}
	\mathscr{C}_\textup{loc}=\{(w^\ep,\alpha^\ep):0<\ep\ll1 \}\subset\mathcal{X}\times\mathbb{R}
	\end{equation*} 
	of nontrivial symmetric solutions to $\mathscr{F}(w,\alpha^\ep)=0$ with the asymptotic expansion
	\begin{equation*}
	w_1^{\ep}(x,0)=\frac{3\ep}{\gamma^2-3\gamma+3}\operatorname{sech}^2\bigg(\frac{\sqrt{3\ep}}{2} x\bigg)+O(\ep^{2+\tfrac{1}{2}})
	\end{equation*}
	in $C_\be^{3+\beta}$. Moreover, the following properties hold: 
	\begin{enumerate}[label=\rm(\roman*)]
	\item \label{monotonicity small-amplitude} \textup{(Monotonicity)} the solutions on $\mathscr{C}_\textup{loc}$ satisfy the nodal property \eqref{nodal eta};
	
	\item \label{uniqueness small-amplitude} \textup{(Uniqueness)} if $w\in\mathcal{X}$ and $\ep>0$ are sufficiently small and if $w$ satisfies \eqref{nodal eta}, then $\mathscr{F}(w,\alpha^\ep)=0$ implies $w=w^\ep$;
	
	\item \label{invertibility small-amplitude} \textup{(Invertibility)} for all $0<\ep\ll1$, the linearized operator $\mathscr{F}_w(w^\ep,\alpha^\ep)$ is invertible. 
	\end{enumerate} 
\end{theorem}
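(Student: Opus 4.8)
The plan is to recast \eqref{fullproblem} as an evolution equation in the horizontal variable $x$ --- ``spatial dynamics'' --- and apply the center-manifold reduction theorem of Chen, Walsh and Wheeler \cite{chen2019center}, which is built for quasilinear elliptic problems with nonlinear boundary conditions and spares us the many tedious changes of variables that the approach of \cite{gw,paper,strat} would require in our elliptic-system setting. The first step is to verify the abstract hypotheses. Linearizing about $w=0$ at $\alpha=\alpha^\ep=\alpha_\crit-\ep$ and separating variables exactly as in Lemma~\ref{lem: trivial kernel}, the spatial spectrum is governed by $\gamma+\alpha=k\coth k$; since $k\mapsto k\coth k$ has a strict, nondegenerate minimum equal to $1$ at $k=0$, at $\ep=0$ the spatial operator has $0$ as an eigenvalue with a single $2\times2$ Jordan block and no other spectrum on the imaginary axis. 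Hence the center manifold is two-dimensional and depends on the parameter $\ep$. The reflection $x\mapsto-x$ makes the evolution equation reversible, and --- this is the main reason for using \cite{chen2019center} rather than the older formulations --- one may choose the projection so that the reduced coordinate agrees, to leading order, with the surface elevation $\eta(\cdot,1)-1$.

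The second step is to extract the two-dimensional reduced equation. With the above normalization it takes the reversible form
\[
  A'' = 3\ep\,A - c\,A^2 + \cdots, \qquad c=\tfrac32(\gamma^2-3\gamma+3),
\]
and since $\gamma^2-3\gamma+3=(\gamma-\tfrac32)^2+\tfrac34>0$ for every $\gamma$, the quadratic coefficient is strictly positive; thus for $0<\ep\ll1$ the origin is a hyperbolic saddle of this planar system, and reversibility produces a homoclinic orbit. Solving the leading-order equation explicitly gives the $\operatorname{sech}^2$ profile of the statement, with the stated $O(\ep^{5/2})$ remainder in $C_\be^{3+\beta}$ following from the quantitative bounds in \cite{chen2019center} together with the scaling $A\sim\ep$, $x\sim\ep^{-1/2}$. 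The homoclinic orbit and its reflection constitute $\mathscr{C}_\textup{loc}$, and because the amplitude $3\ep/(\gamma^2-3\gamma+3)$ is positive these are symmetric waves of elevation.

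For the qualitative properties, \ref{monotonicity small-amplitude} follows from the leading-order profile, which for $x>0$ is strictly decreasing, has a strict maximum (a nondegenerate critical point) only at the crest $x=0$, and decays exponentially at rate $\sqrt{3\ep}$ as $x\to\infty$; since the remainder and its first derivatives are of strictly smaller order and decay at the same exponential rate, uniformly after rescaling, we obtain $\eta_x<0$ on $\Gamma^+$, whereupon the nodal property \eqref{nodal eta} in $\mathcal{R}^+$ follows from Lemma~\ref{lem: nodal eta} and the maximum-principle arguments of Section~\ref{sect: Nodal Analysis}. For \ref{uniqueness small-amplitude}, the center-manifold theorem guarantees that every sufficiently small bounded solution lies on the invariant manifold and hence corresponds to a bounded orbit of the planar reduced system; the orbits lying in $\mathcal{X}\subset C^2_0$ must decay, so they are either the equilibrium (the trivial solution) or a translate of the homoclinic (the surrounding periodic orbits and the nonzero center equilibrium do not decay), and the nodal property together with the symmetry \eqref{symmetry} selects $w=w^\ep$. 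Finally, for \ref{invertibility small-amplitude}, working in the subspace of functions even in $x$ dictated by \eqref{symmetry}, the linearized reduction identifies the kernel of $\mathscr{F}_w(w^\ep,\alpha^\ep)$ with the decaying solutions of the variational equation of the reduced ODE along the homoclinic; for a planar equation $A''=f(A)$ the only decaying solution is the translation mode $\partial_x w^\ep$, which is odd and therefore excluded, so the kernel is trivial. Since $\mathscr{F}_w(tw^\ep,\alpha^\ep)$ is locally proper, hence semi-Fredholm, for every $t\in[0,1]$ by Lemma~\ref{lem: locally proper full} (the whole path staying in $\mathcal{U}$ when $\ep$ is small), its Fredholm index is constant in $t$ and equals that of $\mathscr{F}_w(0,\alpha^\ep)$, namely $0$ by Lemma~\ref{lem: invertible}; a trivial kernel together with index $0$ gives invertibility.

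The main obstacle is the first step: checking carefully that \eqref{fullproblem}, with its fully nonlinear dynamic boundary condition, fits the functional-analytic framework of \cite{chen2019center} --- the choice of scale of Banach spaces, the smoothness of the nonlinearity, and especially the spectral nondegeneracy at $\ep=0$ --- and then computing the reduced equation and confirming the quadratic coefficient $c$. The transfer of strict monotonicity near the crest and near infinity in \ref{monotonicity small-amplitude} is routine but does rely on the remainder inheriting the $\sqrt{3\ep}$ exponential decay rate of the leading term, uniformly in $\ep$.
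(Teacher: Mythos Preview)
Your proposal is correct and follows essentially the same strategy as the paper: both invoke the center-manifold reduction of \cite{chen2019center}, arrive at the same reduced ODE $q''=3\ep q-\tfrac32(\gamma^2-3\gamma+3)q^2+\cdots$, read off the $\operatorname{sech}^2$ homoclinic, and prove \ref{uniqueness small-amplitude} and \ref{invertibility small-amplitude} via the reduced equation together with a semi-Fredholm homotopy to $\mathscr{F}_w(0,\alpha^\ep)$.

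The one substantive difference is in \ref{monotonicity small-amplitude}. You argue by perturbation: the leading $\operatorname{sech}^2$ profile is strictly monotone with nondegenerate crest and exponential tail, and the remainder is of strictly smaller order with the same decay rate, so $\eta_x<0$ on $\Gamma^+$ survives. The paper instead works directly in the phase plane of the (rescaled) reduced ODE: for every small $\ep$ the homoclinic trajectory from the origin lies in the quadrant $\{Q>0,\,P>0\}$ until it hits the $Q$-axis, which immediately gives $w_{1x}^\ep<0$ on $\Gamma^+$ without any remainder estimates. Both then extend to $\mathcal{R}^+$ by the strong maximum principle applied to the harmonic function $\eta_x$. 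The phase-portrait route is slightly cleaner because it sidesteps the need to verify that the remainder in the center-manifold expansion inherits the leading-order exponential decay uniformly in $\ep$; your reference to Lemma~\ref{lem: nodal eta} here is also misplaced, since that lemma goes from $v<0$ to $\eta_x<0$ rather than the direction you need.
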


We will prove Theorem~\ref{thm: small-amplitude} using the center manifold reduction results in \cite{chen2019center}. To state these results, we need weighted versions of the spaces $\mathcal X,\mathcal Y$ defined in Section~\ref{subsect: function spaces} allowing for exponential growth, namely 
\begin{align*}
  \mathcal{X}_\mu &:=\big\{ (w_1,w_2)\in  (C^{3+\beta}_{\mu}(\overline{\mathcal{R}}))^2:\Delta w_i=0\textup{ in }\mathcal{R},\, w_i=0\textup{ on }B  \big\},\\
  \mathcal{Y}_\mu &:=C_\mu^{3+\beta}(\Gamma)\times C_\mu^{2+\beta}(\Gamma),
\end{align*} 
where here the weighted H\"older spaces $C^{k+\beta}_\mu$ were defined in Section~\ref{subsect: notation}. For any $\mu > 0$, the linearized operator
\begin{align*}
  \mathscr{L}:=\mathscr{F}_w(0,\alpha_\crit),
\end{align*}
given explicitly in \eqref{linearized operator trivial}, extends to a bounded operator $\mathcal X_\mu \to \mathcal Y_\mu$. For $\mu > 0$ sufficiently small, by separating variables we find that the kernel of this operator is two-dimensional, given by
\begin{equation*}
  \ker\mathscr{L}=\left\{
    \begin{pmatrix}
      (A+Bx)\varphi_1(y) \\ (A+Bx)\varphi_2(y)
    \end{pmatrix}
    :A,B\in\mathbb{R} \right\},
\end{equation*}
where 
\begin{alignat*}{2}
  \varphi
  &= 
  \begin{pmatrix}
    \varphi_1(y) 
    \\
    \varphi_2(y) 
  \end{pmatrix}
  =
  \begin{pmatrix}
  y\\-\gamma y
  \end{pmatrix}.
\end{alignat*}
\begin{theorem}\label{thm: center manifold}
	Fix a constant $\mu>0$ sufficiently small.
  Then there exist neighborhoods $\textbf{U}\in\mathcal{X}\times\mathbb{R}$ and $\textbf{V}\subset\mathbb{R}^3$ of the origin and a coordinate map $\Upsilon=(\Upsilon^1(A,B,\alpha),\Upsilon^2(A,B,\alpha))$ satisfying
	\begin{equation*}
		\Upsilon\in C^3(\mathbb{R}^3,\mathcal{X}_\mu),\qquad \Upsilon(0,0,\alpha)=\Upsilon_A(0,0,\alpha)=\Upsilon_B(0,0,\alpha)=0\textup{ for all }\alpha
	\end{equation*}
	such that the following hold.
	\begin{enumerate}[label=\rm(\alph*)]
	\item Suppose that $(w,\alpha)\in \textbf{U}$ solves \eqref{fullproblem operator}. Then $q(x):=w_1(x,1)$ solves the second-order ODE
	\begin{equation}\label{ODE v''}
		q''=f(q,q',\alpha)
	\end{equation}
	where $f\colon\mathbb{R}^3\to\mathbb{R}$ is the $C^3$ mapping
	\begin{align}
    \label{f definition}
		f(A,B,\alpha):=\frac{d^2}{dx^2}\bigg|_{x=0}\Upsilon(A,B,\alpha)(x,1)
  \end{align}
  and has the Taylor expansion
  \begin{align}
    \label{f expansion}
    f(A,B,\alpha)
    = 3\ep A-\tfrac{3}{2}(\gamma^2-3\gamma+3)A^2
    + O((|A|+|B|)(|A|+|B|+|\ep|)^3).
  \end{align}
\item \label{thm: center manifold:recovery} Conversely, if $q\colon\mathbb{R}\to\mathbb{R}$ satisfies the ODE \eqref{ODE v''} and $(q(x),q'(x),\alpha)\in \textbf{V}$ for all $x$, then $q=w_1(\cdot,1)$ for a solution $(w,\alpha)\in \textbf{U}$ of the problem \eqref{fullproblem operator}. Moreover,
	\begin{equation*}
		w_i(x+\tau,y)=q(x)\varphi_i(y)+q'(x)\tau\varphi_i(y)+\Upsilon^i(q(x),q'(x),\alpha)(\tau,y),
	\end{equation*}
	for all $\tau\in\mathbb{R}$.  
	\end{enumerate}
\end{theorem}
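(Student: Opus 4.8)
The plan is to deduce Theorem~\ref{thm: center manifold} from the abstract center manifold reduction theorem for quasilinear elliptic problems on infinite cylinders established in \cite{chen2019center}, applied to the operator equation \eqref{fullproblem operator}. That theorem applies to equations $\mathscr{F}(w,\alpha)=0$ with $\mathscr{F}$ mapping a scale of (weighted) H\"older spaces on the strip into another, and its hypotheses are, roughly: (i) $\mathscr{F}$ is smooth (here $C^3$ suffices) and $\mathscr{F}(0,\alpha)\equiv 0$; (ii) $\mathscr{F}$ is reversible, that is, equivariant under the reflection $x\mapsto -x$; and (iii) the linearization $\mathscr{L}=\mathscr{F}_w(0,\alpha_\crit)$ at the critical parameter has a two-dimensional generalized kernel associated with the spatial frequency $\lambda=0$ and is otherwise ``hyperbolic'' in the sense that the remaining spatial spectrum stays a uniform distance from the imaginary axis, together with the attendant transverse resolvent and Fredholm bounds. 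I would verify these hypotheses in turn and then extract the Taylor expansion \eqref{f expansion}.

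Hypotheses (i) and (ii) are immediate from the explicit form of $\mathscr{F}$ in \eqref{operator F}: both components are polynomials in $w$ and its first derivatives, so $\mathscr{F}$ is real-analytic between the relevant H\"older spaces because multiplication is a bounded bilinear map there, and since $x$-derivatives enter only through the combination $w_{1x}^2$ the operator commutes with $x\mapsto -x$. For hypothesis (iii) I would separate variables exactly as in the proof of Lemma~\ref{lem: trivial kernel}: a mode $e^{\lambda x}(\phi_1(y),\phi_2(y))$ annihilated by $\mathscr{L}$ forces $\gamma+\alpha_\crit=\ell\coth\ell$ with $\ell^2=-\lambda^2$, and since $\gamma+\alpha_\crit=1$ while $\ell\coth\ell-1=\ell^2/3+O(\ell^4)$, the only purely imaginary spatial frequency is $\lambda=0$, with algebraic multiplicity two, all others lying off the imaginary axis. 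The corresponding generalized kernel is then spanned by $\varphi=(y,-\gamma y)$ and $x\varphi$, as recorded above, and the transverse resolvent and Fredholm estimates required by \cite{chen2019center} follow from the complementing-condition Schauder estimates of Appendix~\ref{app: schauder estimates} (compare Lemma~\ref{lem: schauder estimates} and Section~\ref{subsect: linearized op}).

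Granting the hypotheses, \cite{chen2019center} produces the neighborhoods $\mathbf{U}$, $\mathbf{V}$, a coordinate map $\Upsilon\in C^3(\mathbb{R}^3,\mathcal{X}_\mu)$ with $\Upsilon(0,0,\alpha)=\Upsilon_A(0,0,\alpha)=\Upsilon_B(0,0,\alpha)=0$, the reduced second-order ODE \eqref{ODE v''}, and the recovery formula of part~\ref{thm: center manifold:recovery}. One is free to choose the spectral projection onto the central subspace along any $\mathscr{L}$-invariant complement, and I would choose it so that the reduced coordinate $A$ coincides with the surface trace $w_1(\cdot,1)$; this is possible because $\varphi_1(1)=1\neq 0$, and it is precisely the extra normalization $\Upsilon^1(A,B,\alpha)(0,1)=0$. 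With this choice $q(x):=w_1(x,1)$ is literally the reduced variable and \eqref{f definition} is meaningful.

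It then remains to establish \eqref{f expansion}. The linear part of $f$ is read off from the spatial frequencies at $\alpha=\alpha^\ep=\alpha_\crit-\ep$: the relation $\ell\coth\ell=1-\ep$ gives $\ell^2=-3\ep+O(\ep^2)$, hence $\lambda^2=3\ep+O(\ep^2)$, so the linearization of \eqref{ODE v''} is $q''=3\ep\,q+O(\ep^2 q)$ and $f_A(0,0,\alpha^\ep)=3\ep+O(\ep^2)$, with no linear dependence on $B$ by reversibility (which also forces $f$ to be even in $B$). For the quadratic coefficient I would use the standard fact that the second-order Taylor coefficient of a center-manifold-reduced vector field does not involve the correction $\Upsilon$: it is obtained by evaluating the quadratic part of $\mathscr{F}$ — the Hessian $\mathscr{F}_{ww}(0,\alpha_\crit)[\varphi,\varphi]$ computed from \eqref{operator F} — and projecting the result back onto $\ker\mathscr{L}$ with the chosen projection, which after a short algebraic computation yields the coefficient $-\tfrac32(\gamma^2-3\gamma+3)$; note $\gamma^2-3\gamma+3=(\gamma-\tfrac32)^2+\tfrac34>0$, so the reduced equation is genuinely quadratic. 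The higher-order remainder is controlled by the $C^3$ regularity of $f$ together with the scaling $\ep\to 0$. The main obstacle is hypothesis (iii): confirming that the elliptic \emph{system} \eqref{fullproblem} — whose dynamic boundary condition is not oblique in the classical sense — fits the framework of \cite{chen2019center}, which demands verifying the transverse Fredholm and resolvent properties of $\mathscr{L}$ via the complementing condition, and simultaneously arranging the central projection so that the reduced unknown is exactly $w_1(\cdot,1)$ rather than an abstract coordinate. Once that is in place, the Taylor expansion, though it is the source of the explicit constant $\gamma^2-3\gamma+3$, is a routine calculation.
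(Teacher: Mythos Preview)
Your overall plan---apply the center manifold theorem of \cite{chen2019center} and then compute the leading Taylor coefficients---is the paper's plan too. But there are two places where your proposal diverges from, or falls short of, what the paper actually does.

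First, on the ``main obstacle'' you identify: you propose to fit the elliptic \emph{system} into the framework of \cite{chen2019center} by verifying transverse Fredholm and resolvent estimates via the complementing condition. The paper does not do this. Instead it observes that \cite[Theorem~1.1]{chen2019center} is really a scalar result, extended in \cite[Section~2.7]{chen2019center} only to \emph{transmission}-type problems, and then introduces the explicit change of dependent variables
\[
\hat w_1(x,y)=w_1(x,y),\qquad \hat w_2(x,y)=-2w_2(x,-y)+(1-2\gamma)w_1(x,-y),
\]
which recasts the linearized problem in exactly that transmission form. This reformulation is the device that makes the cited theorem applicable; without it you have not shown that the abstract hypotheses are met.

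Second, your computation of the quadratic coefficient relies on the ``standard fact'' that the second-order term of the reduced vector field does not involve the correction $\Upsilon$ and is obtained by projecting $\mathscr{F}_{ww}(0,\alpha_\crit)[\varphi,\varphi]$ onto $\ker\mathscr{L}$. In the present formulation this is not how the coefficient arises: by definition \eqref{f definition}, $f$ is literally $\partial_x^2\big|_{x=0}\Upsilon^1(\cdot,1)$, so the $A^2$ coefficient is $\partial_x^2\Upsilon_{200}^1(0,1)$ and you \emph{must} know $\Upsilon_{200}$. The paper computes it by inserting the ansatz $w_i=(A+Bx)\varphi_i+A^2\Upsilon^i_{200}+\ep A\,\Upsilon^i_{101}+\cdots$ into $\mathscr{F}(w,\alpha)=0$, reading off the linear problems $\mathscr{L}\Upsilon_{200}=(-\tfrac\gamma2,\,-\gamma^2+2\gamma-3)$ and $\mathscr{L}\Upsilon_{101}=(0,2)$, solving them explicitly subject to the normalization $\Upsilon_{ijk}(0,0)=\partial_x\Upsilon_{ijk}(0,0)=0$, and then evaluating $\partial_x^2\Upsilon^1_{ijk}(0,1)$. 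What emerges is a linear rule $\tfrac32(s_2-2s_1)$ on the right-hand-side data $(s_1,s_2)$, which one could call a projection after the fact, but it is not the naive projection onto $\ker\mathscr{L}$ you describe, and your shortcut as stated does not produce the number $-\tfrac32(\gamma^2-3\gamma+3)$ without going through this computation. Your derivation of the linear coefficient $3\ep$ from the dispersion relation is fine and agrees with the paper's $\partial_x^2\Upsilon^1_{101}(0,1)=3$.
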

\begin{proof}
  We will apply Theorem~1.1 in \cite{chen2019center}. This result mostly concerns scalar problems and does not immediately apply to systems of the type \eqref{fullproblem}. However, when expressed in terms of the new dependent variables
 \begin{equation*}
  \left\{
 	\begin{aligned}
 		\hat{w}_1(x,y)&=w_1(x,y),\\
 		\hat{w}_2(x,y)&=-2w_2(x,-y)+(1-2\gamma)w_1(x,-y),
 	\end{aligned}
  \right.
 \end{equation*} 
 the linearized version of \eqref{fullproblem} is of the required transmission type for which the theorem has been extended to in \cite[Section~2.7]{chen2019center}. 

  It remains only to calculate the expansion \eqref{f expansion}. To this end, we expand $\Upsilon$ as
  \begin{equation*}
    \Upsilon(A,B,\ep)=\Upsilon_{200}A^2+\Upsilon_{101}\ep A+O((|A|+|B|)(|A|+|B|+|\ep|)^3),
  \end{equation*}
  in $\mathcal{X}_\mu$, where the functions $\Upsilon_{ijk}$ satisfy the normalization $\Upsilon_{ijk}(0,0)=\partial_x\Upsilon_{ijk}(0,0)=0$. By \cite[Theorem~1.3]{chen2019center}, we can calculate the $\Upsilon_{ijk}$ by inserting the corresponding Taylor expansion for $w$,
  \begin{equation*}
    \begin{aligned}
    w_i&=(A+Bx)\varphi_i+A^2\Upsilon_{200}^{i}+\ep A\Upsilon_{101}^{i}+O((|A|+|B|)(|A|+|B|+|\ep|)^3),
    \end{aligned}
  \end{equation*}
  into \eqref{fullproblem operator}. The first boundary condition $\mathscr F_1(w,\alpha)=0$ becomes
  \begin{align*}
    A^2\big(\gamma\Upsilon_{200}^1+\Upsilon_{200}^2+\tfrac{1}{2}\gamma\big)+\ep A(\gamma\Upsilon_{101}^1+\Upsilon_{101}^2) 
    + O((|A|+|B|)(|A|+|B|+|\ep|)^3)
    &= 0,
  \end{align*}
  on $\Gamma$, while the second boundary condition $\mathscr F_1(w,\alpha)=0$ becomes
  \begin{align*}
    0 &= A^2\big((2\gamma-2)\partial_y\Upsilon_{200}^1+2\partial_y\Upsilon_{200}^2+2\Upsilon_{200}^1+3-2\gamma+\gamma^2\big)\\
    &\qquad
    +2\ep A\big((\gamma-1)\partial_y\Upsilon_{101}^1+\partial_y\Upsilon_{101}^2+\Upsilon_{101}^1-1\big) \\
    &\qquad
    + O((|A|+|B|)(|A|+|B|+|\ep|)^3)
  \end{align*}
  on $\Gamma$. Grouping like terms yields the two linear equations
  \begin{equation*}
  \mathscr{L}\Upsilon_{200}=
    \begin{pmatrix}
      -\tfrac{1}{2}\gamma\\-\gamma^2+2\gamma-3
    \end{pmatrix}
  \qquad\text{and}\qquad
  \mathscr{L}\Upsilon_{101}=
    \begin{pmatrix}
      0\\2
    \end{pmatrix}.
  \end{equation*}

  A direct calculation shows that, for any $s_1,s_2 \in \mathbb R$, the problem
  \begin{align*}
    \mathscr{L}\tilde{\Upsilon}=
    \begin{pmatrix}
      s_1 \\ s_2 
    \end{pmatrix},
    \qquad\text{with}\qquad 
    \tilde{\Upsilon}^1(0,0)
    = 
    \partial_x \tilde{\Upsilon}^1(0,0)
    = 0,
  \end{align*}
  is solved by
  \begin{align*}
    \tilde{\Upsilon}^1&=\tfrac{3}{4}(s_2-2s_1)x^2y-\tfrac{1}{4}(s_2-2s_1)y(y^2-1),\\
    \tilde{\Upsilon}^2&=-\gamma\tilde \Upsilon^1+s_y.
  \end{align*} 
  By the arguments in \cite{chen2019center}, this solution is unique. By choosing $s_1,s_2$ appropriately we therefore obtain explicit expressions for $\Upsilon_{200},\Upsilon_{101}$, and calculate that
  \begin{equation*}
  \partial_x^2\Upsilon_{200}^1(0,0)=-\tfrac{3}{2}(\gamma^2-3\gamma+3)\qquad\text{and}\qquad \partial_x^2\Upsilon_{101}^1(0,0)=3.
  \end{equation*}
  Inserting into the definition \eqref{f definition} of $f(A,B,\ep)$ yields the desired expansion \eqref{f expansion}.
\end{proof}

We are now ready to prove Theorem~\ref{thm: small-amplitude}.  

\begin{proof}[Proof of Theorem~\ref{thm: small-amplitude}]
  By Theorem~\ref{thm: center manifold}, it suffices to work with the reduced ODE \eqref{ODE v''}. Introducing the scaled variables
	\begin{equation*}
		x=|\ep|^{-\tfrac1 2}X,\quad q(x)=\ep Q(X),\quad q_x(x)=\ep|\ep|^{\tfrac12} Q(X),
	\end{equation*}
  the expansion \eqref{f expansion} yields
	\begin{equation}\label{scaled reduced ODE}
		Q_{XX}=P_{X}=3Q-\tfrac{3}{2}(\gamma^2-3\gamma+3)Q^2+O(|\ep|^{\tfrac12}(|Q|+|P|)).
	\end{equation}
	When $\ep=0$, we have have an explicit homoclinic orbit 
	\begin{equation*}
	Q(X)=\frac{3}{\gamma^2-3\gamma+3}\operatorname{sech}^2\bigg(\frac{\sqrt{3}}{2}X \bigg),
	\end{equation*}
	joining the point $(0,0)$ to itself, and which intersects the $Q$-axis at the point $(Q_0,0)$, see Figure~\ref{fig:phase}. Here
	\begin{equation*}
		Q_0=\frac{3}{\gamma^2-3\gamma+3}.
	\end{equation*}
 	In particular, the unstable and stable manifolds meet at $(Q_0,0)$. By the stable manifold theorem, the unstable manifold depends uniformly smoothly on $\ep$. Combining this with the fact that the reversibility symmetry of \eqref{scaled reduced ODE} is independent of $\ep$ yields that for sufficiently small $\ep$ the unstable manifold intersects the $Q$-axis transversally at a point close to $(Q_0,0)$ (see for example, Proposition 5.1 in \cite{kirch:res}). We therefore conclude that, for $0<\ep\ll1$, there exists a homoclinic orbit to the origin which is symmetric in the sense that $Q$ is even in $X$. This proves the existence of a one-parameter family of solutions $(w^\ep,\alpha^\ep)$.
 	
 	\begin{figure}
 		\centering
 		\includegraphics[scale=1.1]{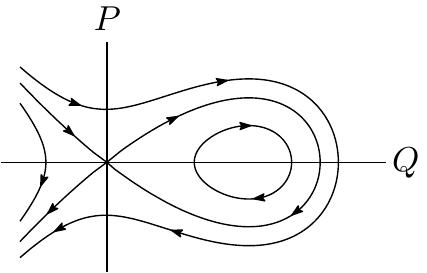}
 		\caption{Phase portrait of the scaled reduced ODE \eqref{scaled reduced ODE} when $\ep=0$
 			\label{fig:phase}
 		}
 	\end{figure}
 	
 	The same argument shows that the trajectory connecting $(0,0)$ to $(Q_0,0)$ also remains in the quadrant $\{Q>0,P>0\}$. We therefore get that $(w^\ep,\alpha^\ep)$ satisfies the monotonicity property 
	\begin{equation*}\label{w1x<0}
	w_{1x}^{\ep}>0\text{ on } \Gamma\cap\{x<0\}.
	\end{equation*}  
	From the strong maximum principle, we immediately get 
	\begin{equation*}
	w_{1x}^{\ep}>0\text{ in } (\Gamma\cup\mathcal{R})\cap\{x<0\}.
	\end{equation*} 
	Arguing similarly for $x>0$ we get
	\begin{equation*}
	w_{1x}^{\ep}<0\text{ in } (\Gamma\cup\mathcal{R})\cap\{x>0\}.
	\end{equation*} 
	Since $w_{1x}=\eta_x$ in $\overline{\mathcal{R}}$, we get that the nodal property \eqref{nodal eta} holds.  This concludes the proof of \ref{monotonicity small-amplitude}.
	
	We now show the uniqueness property \ref{uniqueness small-amplitude}. Let us assume that we have a solution $(w,\alpha^\ep)$ to the problem $\mathscr{F}_w(w,\alpha^\ep)=0$. By the properties of the center manifold, $w$ is determined by a homoclinic orbit of the reduced ODE. In particular, for this solution to be a wave of elevation satisfying the nodal property \eqref{nodal eta}, the homoclinic orbit needs to lie in the right half-plane $\{Q>0\}$. We must therefore necessarily have $w=w^\ep$, up to translation. Indeed, any solution $w$ which is not a translation of $w^\ep$ must lie in the left half-plane $\{Q<0\}$ for large $|x|$, and can therefore not be a wave of elevation. 
	
	In order to show the invertibility condition \ref{invertibility small-amplitude}, we use \cite[Theorem 1.6]{chen2019center} which tells us that  $\dot{w}\in\operatorname{ker}\mathscr{F}_w(w,\alpha^\ep)$ only if $\dot{q}=\dot{w}(\cdot,0)$ solves the linearized reduced ODE
	\begin{equation*}
		\dot{q}''=\nabla_{(q,q')} f(q,q',\ep)\cdot(\dot{q},\dot{q}').
	\end{equation*}
	The corresponding rescaled quantities $(\dot{Q},\dot{P})$ therefore solve the planar system
	\begin{equation}\label{dynamical system}
		\begin{pmatrix}
		\dot{Q} \\ \dot{P}
		\end{pmatrix}_X
		=\mathcal{M}(X)
		\begin{pmatrix}
		\dot{Q} \\ \dot{P}
		\end{pmatrix}
	\end{equation}
	with
	\begin{equation*}
		\lim_{X\to\pm\infty}\mathcal{M}(X)=
		\begin{pmatrix}
		0 & 1 \\ 3+O\big(\ep^{\tfrac{1}{2}} \big) & O\big(\ep^{\tfrac{1}{2}} \big)
		\end{pmatrix}.
	\end{equation*}
	As $X\to\pm\infty$, the eigenvalues of $\mathcal{M}$ approach $\pm\sqrt{3}$. Since this means that the eigenvalues are not purely imaginary, the dynamical system \eqref{dynamical system} only admits two linearly independent solutions: $q_1=w_x^\ep$, which we can rule out since it is not even, and $q_2$ which grows exponentially and is thus also not an admissible solution. Therefore, the only uniformly bounded solution to the reduced linearized problem is the trivial solution. Theorem 1.6 in \cite{chen2019center} therefore enables us to conclude that the kernel of $\mathscr{F}_w(w^\ep,\alpha^\ep)$ must be trivial. Moreover, by Lemma~\ref{lem: fredholm} below, the linearized operator $\mathscr{F}_w(w^\ep,\alpha^\ep)$ is Fredholm of index $0$ $\mathcal{X}\to\mathcal{Y}$, and must therefore be invertible. This concludes the proof of \ref{invertibility small-amplitude}.
\end{proof}
\begin{lemma}\label{lem: fredholm}
	Let $\delta_1,\delta_2>0$ be sufficiently small. Then for any solution $(w,\alpha)\in\mathcal{U}$ to \eqref{fullproblem operator} with $\|w\|_\mathcal{X}<\delta_1$ and $0<\alpha_\crit-\alpha<\delta_2$, the linearized operator $\mathscr{F}_w(w,\alpha)$ is Fredholm with index $0$ both $\mathcal{X}_\be\to\mathcal{Y}_\be$ and $\mathcal{X}\to\mathcal{Y}.$
\end{lemma}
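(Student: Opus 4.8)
The plan is to connect the linearized operator $\mathscr{F}_w(w,\alpha)$ at our small solution to the linearized operator $\mathscr{F}_w(0,\alpha)$ at the trivial solution by the straight-line homotopy $t\mapsto \mathscr{F}_w(tw,\alpha)$, $t\in[0,1]$, arranged so that it never leaves the set $\mathcal{U}$, and then to exploit the stability of the Fredholm index within the class of semi-Fredholm operators.

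First I would fix the smallness constants. Since $\alpha_\crit = 1-\gamma$ is a fixed positive number, taking $\delta_2$ small forces $\alpha\in(\alpha_\crit-\delta_2,\alpha_\crit)$ to lie in a fixed compact subset of $(0,\alpha_\crit)$, so in particular $2\alpha$ is bounded. Using the explicit formula \eqref{eqn:lambda} for $\lambda$, I would then choose $\delta_1$ small, depending only on $\gamma$, so that whenever $\|w\|_{\mathcal{X}}<\delta_1$ one has $|1-2\alpha t w_1|\ge\tfrac12$ and $t^2 w_{1x}^2 + (1+t w_{1y})^2 \ge \tfrac14$ throughout $\mathcal{R}$ for every $t\in[0,1]$, whence $\lambda(tw,\alpha)\ge 1 > 0$. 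Since $\mathcal{X}$ is a linear space, $tw\in\mathcal{X}$, and together with $\alpha<\alpha_\crit$ this shows $(tw,\alpha)\in\mathcal{U}$ for all $t\in[0,1]$.

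Next I would invoke Lemma~\ref{lem: locally proper full} to conclude that each operator $\mathscr{F}_w(tw,\alpha)$, $t\in[0,1]$, is locally proper --- equivalently, upper semi-Fredholm, with finite-dimensional kernel and closed range --- both as a map $\mathcal{X}_\be\to\mathcal{Y}_\be$ and as a map $\mathcal{X}\to\mathcal{Y}$. From the explicit expressions \eqref{linearized operator}--\eqref{coefficients}, the coefficients of $\mathscr{F}_w(tw,\alpha)$ are polynomials in $t$, $w$, its derivatives and $\alpha$, so $t\mapsto\mathscr{F}_w(tw,\alpha)$ is a continuous (indeed real-analytic) path of bounded operators, with $\|\mathscr{F}_w(tw,\alpha)-\mathscr{F}_w(t'w,\alpha)\|\lesssim |t-t'|\,\|w\|_{\mathcal{X}}$. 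The index of an upper semi-Fredholm operator (valued in $\mathbb{Z}\cup\{-\infty\}$) is locally constant in the operator-norm topology, hence constant along this path. At $t=0$, Lemma~\ref{lem: invertible} --- applicable because $(0,\alpha)\in\mathcal{U}$ --- gives that $\mathscr{F}_w(0,\alpha)$ is invertible, so its index is $0$. Therefore $\mathscr{F}_w(w,\alpha)=\mathscr{F}_w(1\cdot w,\alpha)$ has index $0$ as well; being semi-Fredholm with finite index, it is Fredholm of index $0$. The same reasoning applies verbatim in the two functional settings $\mathcal{X}_\be\to\mathcal{Y}_\be$ and $\mathcal{X}\to\mathcal{Y}$.

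The one point requiring care is keeping the homotopy inside $\mathcal{U}$: this is exactly what forces the bound $\|w\|_{\mathcal{X}}<\delta_1$, and it is why one cannot simply quote openness of the index-$0$ Fredholm operators in a norm ball around $\mathscr{F}_w(0,\alpha)$ --- the norm of $\mathscr{F}_w(0,\alpha)^{-1}$ blows up as $\alpha\uparrow\alpha_\crit$, so that approach would make $\delta_1$ depend on $\delta_2$, whereas the homotopy argument does not. The remaining ingredients --- local constancy of the semi-Fredholm index and continuity of the linearization in operator norm --- are standard; see for instance \cite{volpert:book}.
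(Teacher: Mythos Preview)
Your argument is correct and follows the same strategy as the paper: connect $\mathscr{F}_w(w,\alpha)$ to the invertible operator $\mathscr{F}_w(0,\alpha)$ by a continuous path of semi-Fredholm operators and invoke constancy of the index. The only difference is the choice of path. The paper interpolates linearly between the two operators, setting $\mathscr{L}^{(t)}=\mathscr{F}_w(0,\alpha)+t\bigl(\mathscr{F}_w(w,\alpha)-\mathscr{F}_w(0,\alpha)\bigr)$; since these intermediate operators are not of the form $\mathscr{F}_w(\cdot,\cdot)$, the paper must re-verify the Lopatinskii bound \eqref{complementing condition bound} along the family and rerun the semi-Fredholm argument ``as in Lemma~\ref{lem: locally proper full}''. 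Your path $t\mapsto\mathscr{F}_w(tw,\alpha)$ instead interpolates at the level of base points, and because $(tw,\alpha)\in\mathcal{U}$ for all $t\in[0,1]$ once $\delta_1$ is small, Lemma~\ref{lem: locally proper full} applies directly at every $t$. This is a minor but genuine streamlining; your closing remark about why a naive perturbation argument around $\mathscr{F}_w(0,\alpha)$ would not give $\delta_1$ uniform in $\delta_2$ is also a nice clarification not made explicit in the paper.
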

\begin{proof}
	Let $w$ and $\alpha$ be as in the statement of the lemma. Note that $\delta_2>0$ implies $\alpha<\alpha_\crit$. We can therefore use the results from Section~\ref{sect: Functional Analytic}. Indeed,
  the limiting operator of $\mathscr{F}_w(w,\alpha)$ is $\mathscr{F}_w(0,\alpha)$ which is invertible by Lemma~\ref{lem: invertible}. Moreover, from Lemma~\ref{lem: locally proper full}, we know that $\mathscr{F}_w(w,\alpha)$ is semi-Fredholm $\mathcal{X}_\be\to\mathcal{Y}_\be$ and $\mathcal{X}\to\mathcal{Y}$. It remains to show that the Fredholm index of $\mathscr{F}_w(w,\alpha)$ is equal to $0$. By choosing $\delta_1$ and $\delta_2$ sufficiently small, we can ensure that the linearized operator $\mathscr{F}_w(w,\alpha)$ is uniformly elliptic and the corresponding minor constant $\lambda(w,\alpha)$ is uniformly bounded away from $0$. Similarly, we can guarantee a lower bound on the Lopatinskii constant \eqref{complementing condition bound} in Appendix~\ref{app: schauder estimates} for the family of operators 
	\begin{equation*}
	\mathscr{L}^{(t)}=\mathscr{F}_w(0,\alpha)+t(\mathscr{F}_w(w,\alpha)-\mathscr{F}_w(0,\alpha))
	\end{equation*}
	with $t\in[0,1]$. Using the Schauder estimates in Appendix~\ref{app: schauder estimates} and arguing as in Lemma~\ref{lem: locally proper full}, $\mathscr{L}^{(t)}$ is also locally proper both $\mathcal{X}_\be\to\mathcal{Y}_\be$ and $\mathcal{X}\to\mathcal{Y}$. By continuity of the index, $\mathscr{L}^{(1)}=\mathscr{F}_w(w,\alpha)$ has the same Fredholm index $0$ as $\mathscr{L}^{(0)}=\mathscr{F}_w(0,\alpha)$ which is invertible by Lemma~\ref{lem: invertible} and hence Fredholm of index $0$. Therefore,  $\mathscr{F}_w(w,\alpha)$ is Fredholm with index $0$ both $\mathcal{X}_\be\to\mathcal{Y}_\be$ and $\mathcal{X}\to\mathcal{Y}$. 
\end{proof}

\subsection{Global continuation}\label{subsect: large-amplitude}
The aim of this subsection is to continue the local curve of solutions $\mathscr{C}_\text{loc}$ in Theorem~\ref{thm: small-amplitude} to obtain a global curve of solutions $\mathscr{C}$ using real-analytic global bifurcation theory. Unfortunately, the classical theorems by Dancer~\cite{dancer} and Buffoni and Toland~\cite{bt:analytic} are not well-suited for our problem. Indeed, the linearized operator $\mathscr F_w(0,\alpha_\mathrm{cr})$ is not Fredholm $\mathcal X \to \mathcal Y$, and we have potential loss of compactness due to the unbounded domain. These issues are dealt with in \cite[Theorem 6.1]{strat}, which has been crafted to specifically suit the case of solitary waves. However, we need a further small modification for the system setting, as our linearized operators are not automatically Fredholm index 0 as soon as they are semi-Fredholm. 

Thankfully, this can be easily resolved by appealing to the continuity of the index. The modified result and its proof are given in Appendix~\ref{app: cited results}. Applied to our present problem, it yields the following.
\begin{theorem}\label{thm: global curve}
	The local curve $\mathscr{C}_\textup{loc}$ is contained in a continuous curve of solutions parameterized as
	\begin{equation*}
		\mathscr{C}=\{(w(s),\alpha(s)):0<s<\infty \}\subset\mathcal{U}
	\end{equation*}
	with the following properties.
  \begin{enumerate}[label=\rm(\alph*)] % using enumitem package
  \item One of two alternatives must hold: either
    \begin{enumerate}[label=\rm(\roman*)] % using enumitem package
    \item \label{thm: global curve:blow-up} as $s\to\infty$,
      \begin{equation*}
        N(s):=\|w(s)\|_\mathcal{X}+\frac{1}{\lambda(w(s),\alpha(s))}+\frac{1}{\alpha(s)}+\frac{1}{\alpha_\crit-\alpha(s)}\longrightarrow\infty;\text{ or }
      \end{equation*}

    \item \label{thm: global curve:cpt} there exists a sequence $s_n\to\infty$ such that $\sup_nN(s_n)<\infty$ but $\{w(s_n)\}$ has no subsequences converging in $\mathcal{X}$.
    \end{enumerate}
  \item Near each point $(w(s_0),\alpha(s))\in\mathscr{C}$, we can re-parameterize $\mathscr{C}$ so that the mapping $s\mapsto(w(s),\alpha(s))$ is real analytic.
  
  \item \label{thm: global local} $(w(s),\alpha(s))\notin\mathscr{C}_\textup{loc}$ for $s$ sufficiently large.
  \end{enumerate}
\end{theorem}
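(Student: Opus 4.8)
The plan is to deduce Theorem~\ref{thm: global curve} directly from the abstract real-analytic global bifurcation theorem recorded in Appendix~\ref{app: cited results}, which is the solitary-wave version of the Dancer--Buffoni--Toland theorem from \cite[Theorem~6.1]{strat} with the additional feature that the linearized operators along the curve need only be semi-Fredholm, provided that they are invertible (and hence Fredholm of index $0$) at one point. Thus the whole proof reduces to checking the hypotheses of that theorem for the operator $\mathscr{F}\colon\mathcal{U}\to\mathcal{Y}$ from \eqref{operator F}, with $\mathcal{U}$ as in \eqref{eqn:U}, taking the local curve $\mathscr{C}_\textup{loc}$ of Theorem~\ref{thm: small-amplitude} as the starting point.

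There are three things to verify. First, $\mathscr{F}$ is real-analytic on $\mathcal{U}$: inspecting \eqref{operator F}, $\mathscr{F}_1$ and $\mathscr{F}_2$ are polynomial in $\alpha$ and in $w$ and its first derivatives, so $\mathscr{F}$ extends to a real-analytic map $\mathcal{X}\times\mathbb{R}\to\mathcal{Y}$, and $\mathcal{U}$ is open since $\alpha\mapsto\alpha_\crit$ and $\lambda$ in \eqref{eqn:lambda} are continuous on $\mathcal{X}\times\mathbb{R}$. Second, $\mathscr{F}_w(w,\alpha)\colon\mathcal{X}\to\mathcal{Y}$ is semi-Fredholm for every $(w,\alpha)\in\mathcal{U}$; this is precisely the local properness established in Lemma~\ref{lem: locally proper full}, local properness being equivalent to semi-Fredholmness with finite-dimensional kernel and closed range. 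Third, $\mathscr{F}_w$ is invertible at some point of $\mathscr{C}_\textup{loc}$, which is Theorem~\ref{thm: small-amplitude}\ref{invertibility small-amplitude} (and Lemma~\ref{lem: fredholm} more generally gives index $0$ throughout a neighborhood of $(0,\alpha_\crit)$). This is exactly where the modification of \cite[Theorem~6.1]{strat} enters: since $\mathscr{F}_w$ is semi-Fredholm at every point of $\mathcal{U}$, its Fredholm index is locally constant, so invertibility at one point of the connected curve $\mathscr{C}$ forces index $0$ along all of $\mathscr{C}$, which is the only property of the index that the classical argument uses. One should also record, via the uniqueness statement Theorem~\ref{thm: small-amplitude}\ref{uniqueness small-amplitude} together with the analytic implicit function theorem, that near each point of $\mathscr{C}_\textup{loc}$ the zero set of $\mathscr{F}$ coincides with $\mathscr{C}_\textup{loc}$; this is needed both to start the continuation and for property \ref{thm: global local}.

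Granting these, the theorem of Appendix~\ref{app: cited results} produces a continuous curve $\mathscr{C}=\{(w(s),\alpha(s)):0<s<\infty\}\subset\mathcal{U}$ extending $\mathscr{C}_\textup{loc}$, admitting a local real-analytic reparameterization near each of its points, and such that as $s\to\infty$ either $N(s)\to\infty$ or there is a sequence $s_n\to\infty$ with $\sup_nN(s_n)<\infty$ along which $\{w(s_n)\}$ has no $\mathcal{X}$-convergent subsequence. The composite quantity $N(s)$ is tailored to detect approach to $\partial\mathcal{U}$: it blows up precisely when $\|w(s)\|_\mathcal{X}\to\infty$, or $\lambda(w(s),\alpha(s))\to0$, or $\alpha(s)\to0$, or $\alpha(s)\to\alpha_\crit$, and these exhaust the ways of leaving $\mathcal{U}$. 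For property \ref{thm: global local}: if $(w(s_n),\alpha(s_n))\in\mathscr{C}_\textup{loc}$ along some sequence $s_n\to\infty$, then, since the zero set of $\mathscr{F}$ near $\mathscr{C}_\textup{loc}$ equals $\mathscr{C}_\textup{loc}$, the curve $\mathscr{C}$ would be forced to retrace $\mathscr{C}_\textup{loc}$ and hence be a closed loop; but a compact loop contradicts both alternatives in (a).

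The main obstacle, and the reason a verbatim application of \cite{strat} does not suffice, is the Fredholm-index point flagged in Section~\ref{subsect: linearized op}: because the complementing (Lopatinskii) condition for elliptic systems is topologically more intricate than scalar obliqueness, one cannot conclude ``index $0$'' from semi-Fredholmness alone, as in the scalar periodic theory. Dealing with this — isolating in the abstract statement the single place where index $0$ is genuinely used, and supplying it via homotopy invariance of the index together with invertibility at the bifurcation point — is the content of the modified theorem in Appendix~\ref{app: cited results}. Everything else is a routine assembly of the analyticity of \eqref{operator F} with the linear theory of Sections~\ref{sect: Functional Analytic}--\ref{sect: Uniform Regularity}; in particular the uniform Schauder estimates of Lemma~\ref{lem: schauder applied} and the uniform regularity of Proposition~\ref{prop: uniform regularity}, which will later allow the alternatives to be winnowed but are not needed to produce the curve itself.
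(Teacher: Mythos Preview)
Your proposal is correct and follows essentially the same approach as the paper: apply the abstract global bifurcation theorem of Appendix~\ref{app: cited results} after verifying real-analyticity of $\mathscr{F}$, local properness of $\mathscr{F}_w(w,\alpha)$ via Lemma~\ref{lem: locally proper full}, and invertibility along $\mathscr{C}_\textup{loc}$ via Theorem~\ref{thm: small-amplitude}\ref{invertibility small-amplitude}. Two minor remarks: hypothesis~(III) of the abstract theorem requires invertibility \emph{along all of} $\mathscr{C}_\textup{loc}$, not merely at one point (which is indeed what Theorem~\ref{thm: small-amplitude}\ref{invertibility small-amplitude} provides), and your separate argument for part~\ref{thm: global local} is unnecessary since that conclusion is already delivered by the abstract theorem itself.
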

\begin{proof}
	We check that the hypothesis of Theorem~\ref{thm: global bifurcation} are satisfied. In Section~\ref{sect: Functional Analytic} we reformulated our problem \eqref{fullproblem} as a nonlinear operator equation $\mathscr{F}(w,\alpha)=0$ with $\mathscr{F}\colon\mathcal{U}\to\mathcal{Y}$. Clearly, $\mathscr{F}$ is real analytic on $\mathcal{U}$. Theorem~\ref{thm: small-amplitude} constructs a suitable local curve $\mathscr{C}_\text{loc}=\{(w^\ep,\alpha^\ep):0<\ep<\infty \}\subset\mathcal{U}$. From Lemma~\ref{lem: locally proper full}, the linearized operator $\mathscr{F}_w(w,\alpha)$ is locally proper for $(w,\alpha)\in\mathcal{U}$. Finally, from Theorem~\ref{thm: small-amplitude}\ref{invertibility small-amplitude}, $\mathscr{F}_w$ is invertible along $\mathscr{C}_\text{loc}$. 
\end{proof}

\subsection{Proof of the main result}
We now use results from the previous sections to eliminate certain alternatives from Theorem~\ref{thm: global curve}. Recall that the functions $w_1,w_2$ in this theorem are related to $\eta,\zeta$ by \eqref{tilde}. We begin with the loss of compactness alternative \ref{thm: global curve:cpt}.
\begin{lemma}\label{lem: global nodal properties}
	The nodal property \eqref{nodal eta} holds along the global bifurcation curve $\mathscr{C}$.
\end{lemma}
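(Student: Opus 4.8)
The plan is to run a standard open--closed argument on the connected parameter interval $(0,\infty)$. Set
\[
  \mathcal N:=\{\,s\in(0,\infty): v(s)\text{ satisfies }\eqref{v<0}\,\},
\]
where $v(s)$ denotes the vertical velocity \eqref{conformal velocity} attached to $(w(s),\alpha(s))$. Once I show that $\mathcal N$ is nonempty, relatively open, and relatively closed, connectedness forces $\mathcal N=(0,\infty)$, and then \eqref{nodal eta} holds along $\mathscr C$ by Lemma~\ref{lem: nodal eta} together with the symmetry \eqref{symmetry} (which makes both $v$ and $\eta_x$ odd in $x$, so that \eqref{v<0} on $\Gamma^+\cup\mathcal R^+$ is equivalent to its reflection onto $\{x<0\}$). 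Nonemptiness is immediate: by Theorem~\ref{thm: global curve} we have $\mathscr C_{\textup{loc}}\subset\mathscr C$; the solutions on $\mathscr C_{\textup{loc}}$ satisfy \eqref{nodal eta} by Theorem~\ref{thm: small-amplitude}\ref{monotonicity small-amplitude}; and \eqref{nodal eta} implies \eqref{v<0} exactly as in the proof of Lemma~\ref{lem: nodal eta} ($v$ and $\eta_x$ share a sign on $\Gamma^+$, and then the strong maximum principle gives $v<0$ in $\mathcal R^+$).

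For openness, fix $s_0\in\mathcal N$. Since $\mathscr C\subset\mathcal U$ we have $\alpha(s_0)<\alpha_\crit=1-\gamma$, and since $s\mapsto(w(s),\alpha(s))$ is continuous into $\mathcal X\times\mathbb R$ we have $\|w(s)-w(s_0)\|_{C^3(\mathcal R)}+|\alpha(s)-\alpha(s_0)|\to0$ as $s\to s_0$. Proposition~\ref{prop: open}, applied with $(\eta^*,\zeta^*,\alpha^*)=(\eta(s_0),\zeta(s_0),\alpha(s_0))$, then yields an $\ep>0$ such that every nearby solution of \eqref{fullproblem} satisfies \eqref{v<0}; hence an entire neighborhood of $s_0$ lies in $\mathcal N$.

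For closedness, take $s_n\in\mathcal N$ with $s_n\to s_0\in(0,\infty)$. Continuity of the curve in $\mathcal X$ and the formula \eqref{conformal velocity}---whose denominator stays bounded away from $0$ for $s$ near $s_0$, because $(w(s_0),\alpha(s_0))\in\mathcal U$ forces $|\nabla\eta(s_0)|>0$ everywhere---give $v(s_n)\to v(s_0)$ pointwise on $\overline{\mathcal R}$, so $v(s_0)\le0$ on $\Gamma^+$. By Proposition~\ref{prop: closed}, either $v(s_0)$ satisfies \eqref{v<0}, whence $s_0\in\mathcal N$, or else $v(s_0)\equiv0$. The one genuinely delicate point is excluding the second alternative. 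I would argue that $v(s_0)\equiv0$ forces a laminar solution: \eqref{invariant 1}--\eqref{invariant 2} give $\nabla(u-\gamma\eta)\equiv0$, so $u-\gamma\eta$ is constant, equal to $1-\gamma$ by \eqref{asymptotic velocity} and \eqref{tilde regularity}; substituting $u=\gamma\eta+1-\gamma$ into the dynamic condition \eqref{dynamic velocity} and simplifying gives $(\eta-1)\bigl(\gamma^2(\eta-1)+2(\gamma+\alpha)\bigr)=0$ on $\Gamma$, which together with $\eta(\cdot,1)\to1$ at infinity and the continuity of $\eta$ forces $\eta\equiv1$ on $\Gamma$; since $\eta-y$ is then a bounded harmonic function vanishing on $\partial\mathcal R$ it vanishes identically, and the kinematic condition \eqref{kinematic} similarly forces $\zeta\equiv(1-\gamma)y$, i.e.\ $w(s_0)=0$. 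It remains to observe that no point of $\mathscr C$ is laminar: the set $\mathcal T:=\{s:w(s)=0\}$ is relatively closed by continuity and relatively open because, for $s\in\mathcal T$, one has $(0,\alpha(s))\in\mathcal U$ (note $\lambda(0,\alpha)\equiv4>0$ and $\alpha(s)<\alpha_\crit$), so $\mathscr F_w(0,\alpha(s))$ is invertible by Lemma~\ref{lem: invertible} and the implicit function theorem plus continuity of the curve force $w\equiv0$ near $s$; since $\mathcal T\ne(0,\infty)$ (the solutions on $\mathscr C_{\textup{loc}}$ are nontrivial), $\mathcal T=\emptyset$. Hence $v(s_0)\not\equiv0$ and $s_0\in\mathcal N$.

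Combining the three properties gives $\mathcal N=(0,\infty)$, which is the claim. The main obstacle is the closed condition: one must pass safely to the limit in \eqref{conformal velocity} (using the uniform-in-$x$ lower bound on the conformal factor available along $\mathscr C$ from $\mathscr C\subset\mathcal U$), and, crucially, discard the degenerate alternative $v\equiv0$ produced by Proposition~\ref{prop: closed}, which I propose to handle via the laminar-flow computation above combined with the invertibility of the trivial linearization from Lemma~\ref{lem: invertible}.
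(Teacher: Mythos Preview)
Your argument is correct and follows essentially the same route as the paper: an open--closed argument on the connected curve, invoking Proposition~\ref{prop: open} for openness, Proposition~\ref{prop: closed} for closedness, and ruling out the degenerate alternative $v\equiv0$ by showing it forces a trivial solution and then excluding trivial solutions from $\mathscr C$ via the invertibility in Lemma~\ref{lem: invertible} and the implicit function theorem. The only organizational difference is that the paper disposes of trivial solutions \emph{before} running the open--closed argument, whereas you fold that step into the closedness part; you also make the implication ``$v\equiv0\Rightarrow w=0$'' explicit, which the paper leaves implicit.
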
 
\begin{proof}
	In order to apply the results from Section~\ref{sect: Nodal Analysis} we must first show that the global curve $\mathscr{C}$ contains no trivial solutions $(0,\alpha)$. By the definition of $\mathcal{U}$, any solutions on $\mathscr{C}$ must have $\alpha<\alpha_\crit$. Consider now the relatively closed set $\mathscr{T}$ of all trivial solutions in $\mathscr{C}$. Clearly, since $\mathscr{C}$ is closed, this set is relatively closed. Moreover, by Lemma~\ref{lem: invertible}, the operator $\mathscr{F}_w(0,\alpha)$ is invertible for all $\alpha<\alpha_\crit$. By the implicit function theorem the trivial solutions must thus lie on locally unique continuous curves parameterized by $w=w(\alpha)$ with $\alpha<\alpha_\crit$, implying that the set $\mathscr{T}$ is relatively open in $\mathscr{C}$. Since $\mathscr{C}$ is continuous, it must be connected and hence we only have two options: either $\mathscr{C}$ contains only trivial solutions, or it contains none. Since $\mathscr{C}_\text{loc}\subset\mathscr{C}$, the former cannot be true. 
	
	From Theorem~\ref{thm: small-amplitude}, the nodal property \eqref{nodal eta} holds along the local curve $\mathscr{C}_\text{loc}$. Now let $\mathscr{N}\subset\mathscr{C}$ denote the set of all $(w,\alpha)\in\mathscr{C}$ satisfying \eqref{nodal eta}. Since $\mathscr{C}$ is connected in $\mathcal{X}\times\mathbb{R}$ and we have shown that there are no trivial solutions in $\mathscr{C}$, Propositions~\ref{prop: closed}, and~\ref{prop: open} yield that $\mathscr{N}\subset\mathscr{C}$ is both relatively open and relatively closed. Since the local curve $\mathscr{C}_\text{loc}\subset\mathscr{N}$ we see that $\mathscr{N}$ is nonempty and therefore we must have $\mathscr{N}=\mathscr{C}$.
\end{proof}
We now get the following result.
\begin{lemma}\label{lem: remove alt ii}
	Alternative \ref{thm: global curve:cpt} in Theorem~\ref{thm: global curve} cannot occur.
\end{lemma}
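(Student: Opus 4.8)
The plan is to argue by contradiction, using alternative~\ref{thm: global curve:cpt} to manufacture exactly the hypotheses of the compactness result Lemma~\ref{lem: compactness}, whose conclusion then contradicts that alternative. So suppose alternative~\ref{thm: global curve:cpt} holds, and fix a sequence $s_n\to\infty$ with $K:=\sup_n N(s_n)<\infty$ along which $\{w(s_n)\}$ has no subsequence converging in $\mathcal{X}$. Write $(\eta_n,\zeta_n)$ for the solution of \eqref{fullproblem} corresponding to $(w(s_n),\alpha(s_n))$ via \eqref{tilde}, and put $\alpha_n:=\alpha(s_n)$.

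First I would unpack the four contributions to $N(s_n)$: boundedness of $\|w(s_n)\|_\mathcal{X}$ gives a uniform bound on $\|(\eta_n,\zeta_n)\|_{C^{3+\beta}(\mathcal{R})}$; boundedness of $1/\lambda(w(s_n),\alpha_n)$ gives $\inf_n\lambda(w(s_n),\alpha_n)>0$, which by the definition \eqref{eqn:lambda} of $\lambda$ says precisely that $\inf_n\inf_\mathcal{R}(1-2\alpha_n(\eta_n-1))^2|\nabla\eta_n|^2>0$; and boundedness of $1/\alpha_n$ and of $1/(\alpha_\crit-\alpha_n)$ confines $\alpha_n$ to the compact subinterval $[1/K,\alpha_\crit-1/K]$ of $(0,\alpha_\crit)$. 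Together these are the uniform bounds \eqref{uniform bounds} (with $\{\alpha_n\}$ in addition bounded, as used inside the proof of Lemma~\ref{lem: compactness}). I would then invoke Lemma~\ref{lem: global nodal properties}, which guarantees that the nodal property \eqref{nodal eta} holds all along $\mathscr{C}$; in particular $\partial_x\eta_n\le 0$ for $x\ge 0$, so hypothesis \eqref{monotonicity} holds as well.

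With \eqref{uniform bounds} and \eqref{monotonicity} in hand, Lemma~\ref{lem: compactness} supplies a subsequence along which $(\eta_n,\zeta_n)\to(\eta,\zeta)$ in $C_\be^{3+\beta}(\overline{\mathcal{R}})$; setting $w:=(\eta-y,\zeta-(1-\gamma)y)$ as in \eqref{tilde}, we get $w(s_n)\to w$ in $C_\be^{3+\beta}(\overline{\mathcal{R}})$. To finish one checks that $w\in\mathcal{X}$: the conditions defining $\mathcal{X}$ --- harmonicity, vanishing on $B$, and membership in $C^2_0$ --- all pass to a $C_\be^{3+\beta}$ limit (for the last because that norm dominates the $C_\be^2$ norm and $C^2_0$ is closed in $C^2_\be$), so $\mathcal{X}$ is a closed subspace of $C_\be^{3+\beta}(\overline{\mathcal{R}})$ and $w(s_n)\to w$ in $\mathcal{X}$, contradicting the choice of $\{s_n\}$. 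There is no real obstacle here beyond bookkeeping: all the substance has already been absorbed into Lemma~\ref{lem: compactness} (hence into the nonexistence of bores, Lemma~\ref{lem:nobores}) and into Lemma~\ref{lem: global nodal properties}, and the only points needing care are the term-by-term matching of $N(s_n)$ to \eqref{uniform bounds} and the closedness of $\mathcal{X}$ in $C_\be^{3+\beta}(\overline{\mathcal{R}})$.
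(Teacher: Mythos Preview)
Your proposal is correct and follows the same approach as the paper: invoke Lemma~\ref{lem: global nodal properties} to obtain the monotonicity hypothesis \eqref{monotonicity}, then apply Lemma~\ref{lem: compactness} to extract a convergent subsequence, contradicting alternative~\ref{thm: global curve:cpt}. The paper's proof is a two-sentence version of yours; your additional bookkeeping (unpacking $N(s_n)$ term by term and verifying that $\mathcal{X}$ is closed in $C_\be^{3+\beta}$) is correct and simply makes explicit what the paper leaves implicit.
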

\begin{proof}
	Lemma~\ref{lem: global nodal properties} ensures that the monotonicity assumption \eqref{monotonicity} holds along the global bifurcation curve $\mathscr{C}$. The statement now follows from Lemma~\ref{lem: compactness}.
\end{proof}
Let us now consider alternative \ref{thm: global curve:blow-up}. More precisely, we wish to deal with the last term.
\begin{lemma}\label{lem: crit F alt}
	If $\|w(s)\|_\mathcal{X}$ and $1/\lambda(w(s),\alpha(s))$ are uniformly bounded along $\mathscr{C}$, then
	\begin{equation*}
		\liminf_{s\rightarrow\infty}\alpha(s)<\alpha_\crit.
	\end{equation*}
\end{lemma}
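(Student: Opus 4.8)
The plan is to argue by contradiction, producing from $\mathscr{C}$ a solution at the critical parameter $\alpha_\crit$ which is a nontrivial wave of elevation, and then appealing to Theorem~\ref{thm: bound on alpha} for a contradiction. Suppose $\liminf_{s\to\infty}\alpha(s)\ge\alpha_\crit$. Since $\mathscr{C}\subset\mathcal{U}$ forces $\alpha(s)<\alpha_\crit$ for all $s$, this means $\alpha(s)\to\alpha_\crit$ as $s\to\infty$; recall also $\alpha_\crit=1-\gamma>0$.

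First I would extract a limit. Fix any sequence $s_n\to\infty$ and write $(\eta_n,\zeta_n,\alpha_n):=(\eta(s_n),\zeta(s_n),\alpha(s_n))$. The hypotheses of the lemma say exactly that $\|w(s_n)\|_\mathcal{X}$ and $1/\lambda(w(s_n),\alpha_n)$ are bounded, which is the uniform non-degeneracy \eqref{uniform bounds}, and the monotonicity \eqref{monotonicity} holds along $\mathscr{C}$ by Lemma~\ref{lem: global nodal properties}. Hence Lemma~\ref{lem: compactness} applies: after passing to a subsequence, $(\eta_n,\zeta_n)\to(\eta_*,\zeta_*)$ in $C^{3+\beta}_\be(\overline{\mathcal{R}})$, and $(\eta_*,\zeta_*,\alpha_\crit)$ is again a solution of \eqref{fullproblem}. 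Here the interior equations and boundary conditions pass to the limit, $w_*:=(\eta_*-y,\zeta_*-(1-\gamma)y)$ lies in the closed subspace $C^2_0(\overline{\mathcal{R}})$, and $\lambda(w_*,\alpha_\crit)=\lim_n\lambda(w(s_n),\alpha_n)>0$, so \eqref{no stagnation on free surface} persists. Moreover the nodal property \eqref{nodal eta} makes each $\eta_n(\cdot,1)$ monotone on either side of $x=0$ with limit $1$ at infinity, so $\eta_n\ge1$ on $\Gamma$, and passing to the limit, $\eta_*\ge1$ on $\Gamma$.

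Then I would invoke Theorem~\ref{thm: bound on alpha}: the solution $(\eta_*,\zeta_*,\alpha_\crit)$ of \eqref{fullproblem} has $\alpha_\crit>0$ and $\eta_*\ge1$ on $\Gamma$, yet its wave speed parameter is not strictly below $\alpha_\crit$, so it must be trivial. Thus $\eta_*\equiv y$ and, by \eqref{kinematic}, $\zeta_*\equiv(1-\gamma)y$; that is, $w_*=0$, so $(w(s_n),\alpha_n)\to(0,\alpha_\crit)$ in $\mathcal{X}\times\mathbb{R}$. Finally, for $n$ large both $\|w(s_n)\|_\mathcal{X}$ and $\alpha_\crit-\alpha_n$ are arbitrarily small while $w(s_n)$ satisfies \eqref{nodal eta} (again Lemma~\ref{lem: global nodal properties}), so the local uniqueness in Theorem~\ref{thm: small-amplitude}\ref{uniqueness small-amplitude} forces $w(s_n)=w^{\alpha_\crit-\alpha_n}$, i.e.\ $(w(s_n),\alpha_n)\in\mathscr{C}_\textup{loc}$. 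As $s_n\to\infty$, this contradicts Theorem~\ref{thm: global curve}\ref{thm: global local}, and the lemma follows. The main difficulty is that the limiting solution lives precisely at $\alpha_\crit$, where the functional-analytic framework of Section~\ref{sect: Functional Analytic} no longer applies; this forces us to work entirely with the PDE formulation \eqref{fullproblem}, using Lemma~\ref{lem: compactness} to generate the limit and Theorem~\ref{thm: bound on alpha} together with the local uniqueness near the bifurcation point to close the argument.
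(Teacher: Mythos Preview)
Your proof is correct and follows essentially the same approach as the paper's: argue by contradiction, use Lemma~\ref{lem: compactness} (with monotonicity from Lemma~\ref{lem: global nodal properties}) to extract a limiting solution at $\alpha_\crit$, apply Theorem~\ref{thm: bound on alpha} to conclude it is trivial, and then invoke the local uniqueness Theorem~\ref{thm: small-amplitude}\ref{uniqueness small-amplitude} to force the sequence back onto $\mathscr{C}_\textup{loc}$, contradicting Theorem~\ref{thm: global curve}\ref{thm: global local}. Your write-up is slightly more detailed in justifying why $\eta_*\ge1$ on $\Gamma$ and why the limit still satisfies \eqref{fullproblem}, but the logical structure is identical.
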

\begin{proof}
  We argue by contradiction. Suppose that there exists a sequence $s_n\rightarrow\infty$ for which we have $\sup_{n\rightarrow\infty}\|w(s_n)\|_{\mathcal{X}}<\infty$, $\inf_{n\to\infty}(\lambda(w(s_n)),\alpha(s_n))>0$ and $\alpha(s_n)\rightarrow\alpha_\crit$. Applying Lemma~\ref{lem: compactness} allows us to extract a subsequence so that $\{(w(s_n),\alpha(s_n))\}$ converges in $\mathcal{X}\times\mathbb{R}$ to a solution $(w^*,\alpha^*)$ of $\mathscr{F}(w,\alpha)=0$ with $\alpha^*=\alpha_\crit$. Since by continuity $w_1\geq0$ on the surface $\Gamma$, Theorem~\ref{thm: bound on alpha} yields that this must be the trivial solution $w=0$. Hence, we must have $\|w(s_n)\|_\mathcal{X}\to0$. Moreover, from Lemma~\ref{lem: global nodal properties} we know that all $w(s_n)$ satisfy the nodal property \eqref{nodal eta} and thus by Theorem~\ref{thm: small-amplitude}\ref{uniqueness small-amplitude} we must have $(w(s_n),\alpha(s_n))\in\mathscr{C}_\text{loc}$ for $n$ sufficiently large. This is a contradiction to Theorem~\ref{thm: global curve}\ref{thm: global local}. 
\end{proof}

We are now ready to prove the main theorem. Recall from Section~\ref{sec:statement} that our primary interest is in the case $\gamma < 0$; we state the corresponding results for $\gamma \ge 0$ only for completeness.
\begin{theorem}\label{thm: main result full}
	The local curve $\mathscr{C}_\textup{loc}$ is contained in a continuous curve of solutions parameterized as
	\begin{equation*}
	\mathscr{C}=\{(w(s),\alpha(s)):0<s<\infty \}\subset\mathcal{U},
	\end{equation*}
	with the following properties.
	\begin{enumerate}[label=\rm(\alph*)] % using enumitem package
	\item \label{thm: final alternatives} As $s\to\infty$
	\begin{enumerate}[label=\rm(\roman*)] % using enumitem package
		\item \label{thm: negative vorticity} for $\gamma<0$,
		\begin{equation*}
		\frac{1}{\inf_\Gamma(1-2\alpha(s) w_1(s))}+\frac{1}{\inf_\Gamma(w_{1x}^2(s)+(1+w_{1y}(s))^2)}+\frac{1}{\alpha(s)}\longrightarrow\infty.
		\end{equation*}
		\item \label{thm: positive vorticity} for $\gamma > 0$,
		\begin{equation*}
      \sup_\Gamma |\nabla{w_1(s)}|
      +\frac{1}{\inf_\Gamma(w_{1x}^2(s)+(1+w_{1y}(s))^2)}+\frac{1}{\alpha(s)}\longrightarrow\infty;
		\end{equation*}
		\item \label{thm: zero vorticity} for $\gamma = 0$,
		\begin{equation*}
      \frac{1}{\inf_\Gamma(1-2\alpha(s) w_1(s))}\longrightarrow\infty.
		\end{equation*}
	\end{enumerate}
		\item Near each point $(w(s),\alpha(s))\in\mathscr{C}$, we can re-parameterize $\mathscr{C}$ so that the mapping $s\mapsto(w(s),\alpha(s))$ is real analytic.
		\item $(w(s),\alpha(s))\notin\mathscr{C}_\text{loc}$ for $s$ sufficiently large.
    \item Each $(w,s) \in \mathscr C$ satisfies the nodal properties \eqref{nodal eta}.
	\end{enumerate}
\end{theorem}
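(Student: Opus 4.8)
The plan is to obtain $\mathscr C$ from Theorem~\ref{thm: global curve} and then whittle down its alternatives. Properties (b) and (c) are immediate from that theorem, and property (d) (the nodal property \eqref{nodal eta} along $\mathscr C$) is exactly Lemma~\ref{lem: global nodal properties}; in particular every solution on $\mathscr C$ is a monotone wave of elevation, so $\eta>1$ on $\Gamma$, and since $w_1=\eta-y$ is harmonic, vanishes on $B$, and decays at infinity, $w_1\ge0$ throughout $\mathcal R$. For part (a), Lemma~\ref{lem: remove alt ii} rules out alternative~\ref{thm: global curve:cpt}, so~\ref{thm: global curve:blow-up} holds: with $N(s)=\|w(s)\|_\mathcal X+1/\lambda(w(s),\alpha(s))+1/\alpha(s)+1/(\alpha_\crit-\alpha(s))$ we have $N(s)\to\infty$ as $s\to\infty$. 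I argue by contradiction: if the quantity in one of \ref{thm: negative vorticity}--\ref{thm: zero vorticity} failed to tend to $\infty$, we could pick $s_n\to\infty$ and $\delta_0>0$ along which it stays $\le1/\delta_0$, and I will show this forces $N(s_n)$ to stay bounded.

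The crux is converting the surface quantities appearing in (a) into control of $\|w\|_\mathcal X$ and $\lambda$ via Proposition~\ref{prop: uniform regularity}. Two reductions hold for every solution on $\mathscr C$. First, since $w_1\ge0$ is harmonic with $w_1=0$ on $B$, it is maximized on $\Gamma$; hence $1-2\alpha(\eta-1)\le1$ on $\Gamma$ and $\inf_{\mathcal R}(1-2\alpha(\eta-1))=\inf_\Gamma(1-2\alpha(\eta-1))=\inf_\Gamma(1-2\alpha w_1)$, so to verify the hypotheses \eqref{eqn:delta} of Proposition~\ref{prop: uniform regularity} it suffices to bound $1-2\alpha(\eta-1)$ below and $|\nabla\eta|$ above and below \emph{on $\Gamma$}. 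Second, $\eta_x-i\eta_y$ is holomorphic with $|\nabla\eta|\to1$ at infinity; after an odd Schwarz reflection of $\eta$ across $B$ (under which the reflected surface carries the same bounds), the maximum and minimum modulus principles, with a Phragm\'en--Lindel\"of argument at infinity, show that two-sided bounds for $|\nabla\eta|$ on $\Gamma$ propagate to all of $\mathcal R$. Throughout, the dynamic boundary condition on $\Gamma$ reads $\psi_y^2=(1-2\alpha(\eta-1))|\nabla\eta|^2$ (cf.~\eqref{eta surface}), where $\psi=\zeta+\tfrac\gamma2\eta^2$ solves \eqref{w problem} and $\psi_y(x,1)\to1$ as $x\to\pm\infty$.

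The surface bounds are supplied case by case. For $\gamma<0$ the hypothesis gives, along $s_n$, $\inf_\Gamma(1-2\alpha(\eta-1))\ge\delta_0$, $\inf_\Gamma|\nabla\eta|^2\ge\delta_0$, $\alpha\ge\delta_0$; then $\psi_y^2=(1-2\alpha(\eta-1))|\nabla\eta|^2\ge\delta_0^2$ on the connected set $\Gamma$, so $\psi_y$ has constant sign, which is $+$ since $\psi_y\to1$ at infinity, hence $\psi_y\ge\delta_0$; combined with $\psi_y<1-\tfrac\gamma2$ from Proposition~\ref{prop: gamma cases}\ref{gamma negative} and $1-2\alpha(\eta-1)\ge\delta_0$ this bounds $|\nabla\eta|^2=\psi_y^2/(1-2\alpha(\eta-1))$ above on $\Gamma$. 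For $\gamma=0$ the problem \eqref{w problem} forces $\psi\equiv y$ (harmonic, with boundary data $y$ and decaying at infinity), so the dynamic condition becomes $|\nabla\eta|^2=1/(1-2\alpha(\eta-1))\in[1,1/\delta_0]$ on $\Gamma$, using only $\inf_\Gamma(1-2\alpha(\eta-1))\ge\delta_0$ and $\eta\ge1$. For $\gamma>0$ the hypothesis gives $\sup_\Gamma|\nabla w_1|\le K$ (hence $|\nabla\eta|\le K+1$ on $\Gamma$), $\inf_\Gamma|\nabla\eta|^2\ge\delta_0$, $\alpha\ge\delta_0$; Proposition~\ref{prop: gamma cases}\ref{gamma positive} then gives $\psi_y>\min\{2-\gamma,\gamma\inf_{\mathcal R}|\nabla\eta|^2\}>0$ (using $\gamma<1$ and the propagated interior lower bound on $|\nabla\eta|$), and the dynamic condition bounds $1-2\alpha(\eta-1)=\psi_y^2/|\nabla\eta|^2$ below on $\Gamma$. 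In each case \eqref{eqn:delta} holds along $s_n$ with a $\delta>0$ depending only on $\gamma,\delta_0$ (and $K$), so Proposition~\ref{prop: uniform regularity} gives $\|w(s_n)\|_\mathcal X\le C$, and then $\lambda(w(s_n),\alpha(s_n))$ is bounded below by a positive constant as well.

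It remains to keep $\alpha_\crit-\alpha(s_n)$ and $\alpha(s_n)$ bounded away from $0$. For the former, since $\|w(s_n)\|_\mathcal X$ and $1/\lambda(w(s_n),\alpha(s_n))$ are bounded, the argument in the proof of Lemma~\ref{lem: crit F alt} applies along any subsequence: were $\alpha(s_n)\to\alpha_\crit$, Lemma~\ref{lem: compactness} would produce a limiting solution at $\alpha=\alpha_\crit$, trivial by Theorem~\ref{thm: bound on alpha}, and Theorem~\ref{thm: small-amplitude}\ref{uniqueness small-amplitude} would place $(w(s_n),\alpha(s_n))$ on $\mathscr C_\textup{loc}$ for large $n$, contradicting Theorem~\ref{thm: global curve}\ref{thm: global local}. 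For $\gamma\ne0$, $\alpha(s_n)\ge\delta_0$ by hypothesis; for $\gamma=0$, where no $1/\alpha$ term appears in the claim, the same compactness argument rules out $\alpha(s_n)\to0$, since the limiting problem at $\alpha=0$ with the nodal and asymptotic constraints admits only the trivial solution (alternatively one may invoke a classical a priori Froude bound for irrotational solitary waves, see~\cite{amick:bounds}). Hence every term of $N(s_n)$ is bounded, the desired contradiction. I expect the genuinely delicate step to be the middle one --- upgrading the pointwise surface bounds to the uniform $C^{3+\beta}$ estimate of Proposition~\ref{prop: uniform regularity}, which hinges on the reflection and modulus-principle control of $|\nabla\eta|$, the connectedness trick pinning the sign of $\psi_y|_\Gamma$, and the sign-of-$\gamma$ bookkeeping in Proposition~\ref{prop: gamma cases}; excluding $\alpha\to0$ when $\gamma=0$ is a secondary technical point.
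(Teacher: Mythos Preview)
Your proof is correct and follows essentially the same route as the paper: both arguments derive (b)--(d) from Theorem~\ref{thm: global curve} and Lemma~\ref{lem: global nodal properties}, eliminate the loss-of-compactness alternative via Lemma~\ref{lem: remove alt ii}, drop the $1/(\alpha_\crit-\alpha)$ term using the mechanism of Lemma~\ref{lem: crit F alt}, and then reduce $\|w\|_\mathcal X$ and $1/\lambda$ to the surface quantities via the maximum/minimum modulus principle (your Schwarz reflection) together with Propositions~\ref{prop: uniform regularity} and~\ref{prop: gamma cases}, splitting on the sign of $\gamma$ exactly as the paper does. The only organizational difference is that you phrase everything as a contradiction along a subsequence, while the paper bounds terms directly; and for $\gamma=0$ you sketch a self-contained compactness argument to exclude $\alpha\to0$ (which works, since the integral identity and invertibility of $\mathscr F_w(0,\alpha)$ for $\alpha<\alpha_\crit$ force nearby solutions to be trivial, contradicting Lemma~\ref{lem: global nodal properties}), whereas the paper simply invokes classical Froude-number bounds for irrotational solitary waves.
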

\begin{proof}
  Combining Lemmas~\ref{lem: global nodal properties} and \ref{lem: remove alt ii} with Theorem~\ref{thm: global curve}, it remains to prove \ref{thm: final alternatives}. Using Lemma~\ref{lem: crit F alt}, alternative \ref{thm: global curve:blow-up} in Theorem~\ref{thm: global curve} is equivalent to
	\begin{align}
    \label{eqn:blowup prelim}
    \|w(s)\|_{\mathcal{X}}
    +\frac{1}{\lambda(w(s),\alpha(s))}+\frac{1}{\alpha(s)}\longrightarrow\infty\text{ as }s\to\infty.
	\end{align}
  Applying the maximum principle and maximum modulus principle to the two factors in the definition \eqref{eqn:lambda} of $\lambda(w,\alpha)$, we see that the second term $1/\lambda(w(s),\alpha(s))$ in \eqref{eqn:blowup prelim} can be bounded above in terms of 	
  \begin{equation*}
		\frac{1}{\inf_\Gamma(1-2\alpha(s) w_1(s))}+\frac{1}{\inf_\Gamma(w_{1x}^2(s)+(1+w_{1y}(s))^2)}.
	\end{equation*}
  By the uniform regularity result Proposition~\ref{prop: uniform regularity}, the first term $\|w(s)\|_{\mathcal X}$ can be bounded in terms of the constant $\delta > 0$ appearing in \eqref{eqn:delta}. Using the maximum principle yet again, we deduce that $\|w(s)\|_{\mathcal X}$ can be bounded in terms of
  \begin{align*}
    \frac{1}{\inf_\Gamma(1-2\alpha(s) w_1(s))}
    +\sup_\Gamma |\nabla{w_1(s)}|
    +\frac{1}{\inf_\Gamma(w_{1x}^2(s)+(1+w_{1y}(s))^2)}.
  \end{align*}
  Putting this all together we find that \eqref{eqn:blowup prelim} implies
	\begin{align}
    \label{eqn:blowup improved}
    \frac{1}{\inf_\Gamma(1-2\alpha(s) w_1(s))}
    +\sup_\Gamma |\nabla{w_1(s)}|
    +\frac{1}{\inf_\Gamma(w_{1x}^2(s)+(1+w_{1y}(s))^2)}
    +\frac{1}{\alpha(s)}\longrightarrow\infty\text{ as }s\to\infty.
	\end{align}
  We will now further simplify \eqref{eqn:blowup improved} based on the sign of $\gamma$.

  For $\gamma < 0$, we eliminate the second term in \eqref{eqn:blowup improved}. Consider a fixed solution $(w,\alpha) \in \mathscr C$. By Proposition~\ref{prop: gamma cases}\ref{gamma negative}, the stream function $\psi$ defined in \eqref{definition w} satisfies the upper bound
	\begin{equation}\label{new gamma negative}
    \psi_y<1-\tfrac 12 \gamma\qquad\text{ on }\Gamma.
	\end{equation}
  We claim that $\psi_y > 0$ on $\Gamma$. Assuming the claim, \eqref{new gamma negative} and \eqref{eta surface} yield
	\begin{equation}\label{dynamic recall}
		(1-2\alpha w_1) (w_{1x}^2(s)+(1+w_{1y}(s))^2)
    =\psi_y^2
    \le (1-\tfrac 12 \gamma)^2
    \qquad\text{on }\Gamma,
	\end{equation}
  so that the second term in \eqref{eqn:blowup improved} is bounded above by a multiple of the third term, proving \ref{thm: negative vorticity}. To prove the claim, we simply note that $\psi_y^2 \ge \lambda(w,\alpha) > 0$ on $\Gamma$, where here we have used the first equality in \eqref{dynamic recall}, the definition \eqref{eqn:lambda} of $\lambda(w,s)$ and the fact that $(w,\alpha) \in \mathscr C \subset \mathcal U$. As the asymptotic conditions for $w$ imply $\psi_y(x,1) \to 1$ as $x \to \pm\infty$, the claim then follows by continuity.

  For $\gamma \ge 0$, a similar argument using Proposition~\ref{prop: gamma cases}\ref{gamma positive} instead yields the lower bound
	\begin{equation}\label{dynamic recall again}
		1-2\alpha w_1 
    \ge 
    \frac{
    \Big(\min\big\{2-\gamma,\gamma\inf_{\mathcal{R}}
    (w_{1x}^2+(1+w_{1y})^2)
     \big\}\Big)^2}
   {w_{1x}^2+(1+w_{1y})^2}
    \qquad\text{on }\Gamma.
	\end{equation}
  Thus the first term in \eqref{eqn:blowup improved} is controlled by the second two, proving \ref{thm: positive vorticity}. We note that in many cases the last term in \eqref{eqn:blowup improved} can also be controlled; see~\cite{froude,strat}.

  Finally, we turn to the irrotational case $\gamma=0$. As for $\gamma < 0$, we can control the second term in \eqref{eqn:blowup improved} using the third term. On the other hand, the numerator in \eqref{dynamic recall again} simplifies and we see that the third term in \eqref{eqn:blowup improved} is bounded by a multiple of the first term. Well-known bounds on the Froude number for irrotational solitary waves yield $\alpha > 1/4$~\cite{at:finite,mcleod}, or indeed the stronger bound $\alpha > 1/2$~\cite{kp}, eliminating the last term in \eqref{eqn:blowup improved} and hence proving \ref{thm: zero vorticity}.
\end{proof}

We now show how the above result for negative vorticity implies Theorem~\ref{thm: main result}.

\begin{proof}[Proof of Theorem~\ref{thm: main result}]
  We first claim that the solutions $(w,\alpha)$ along the curve $\mathscr C$ in Theorem~\ref{thm: main result full} correspond to solutions of the original physical problem \eqref{originalproblem} or equivalently \eqref{euler}. Reversing the arguments in Section~\ref{subsect: conformal map}, it remains only to show that $\eta$ is the imaginary part of a conformal mapping $\xi+i\eta$ defined on the infinite strip $\mathcal R$. As $\eta$ is harmonic, we can easily define $\xi$ using the Cauchy--Riemman equations. By the Darboux--Picard theorem \cite[Corollary 9.16]{burckel:book} it is then sufficient to show that $\xi+i\eta$ is injective on the boundary $\partial \mathcal R = \Gamma \cup B$. By construction we have $\eta=0$ on $B$, and the nodal properties \eqref{nodal eta} imply that $\eta > 0$ in $\Gamma$. Thus the images of $\Gamma$ and $B$ do not intersect. Applying the Hopf lemma to $\eta$ on $B$ we discover that $\eta_y = \xi_x > 0$ there, and so it enough to consider the restriction of $\xi+i\eta$ to the surface $\Gamma$. Suppose for the sake of contradiction that this restriction is not injective. Using the evenness of $\eta$ and the nodal properties \eqref{nodal eta}, we easily check that $\xi$ achieves its nonpositive infimum over the half-strip $\mathcal R^+$ at a point $(x_0,1) \in \Gamma^+$. By the Hopf lemma, we therefore have $\xi_y = \eta_x < 0$ at this point, contradicting the nodal properties \eqref{nodal eta}.

	Recall that we switched to dimensionless variables in Section~\ref{subsect: nondim}. Using stars to denote the associated dimensional quantities, we have $\alpha=1/F^2$ and
  \begin{align*}
    1-2\alpha(\eta^*-d)/d
    &=
    1-2\alpha (\eta-1)  
    = 1-2\alpha w_1,\\
    |\nabla\eta^*(x^*,y^*)|^2
    &= |\nabla\eta(x,y)|^2 
    = w_{1x}^2(x,y)
    + (1+w_{1y}(x,y))^2.
	\end{align*}
  Thus \eqref{main result} in Theorem~\ref{thm: main result} follows directly from \ref{thm: negative vorticity} in Theorem~\ref{thm: main result full}.
\end{proof}
Theorem~\ref{thm: main result} only considers the case of constant negative vorticity. However, in Theorem~\ref{thm: main result full} we also have a result for $\gamma>0$ and for the irrotational case $\gamma=0$. We provide those results in physical variables below. The proof the almost identical to that of Theorem~\ref{thm: main result} and is hence omitted.

\begin{proposition}\label{prop: positive gamma result}
	Fix the gravitational constant $g>0$, asymptotic depth $d>0$ and $\gamma\geq0$. Then there exists a global continuous curve $\mathscr{C}$ of solutions to \eqref{euler} parameterized by $s$, with $s\in(0,\infty)$. Moreover, the following property holds along $\mathscr{C}$ as $s\to\infty$:
	\begin{enumerate}[label=\rm(\roman*)]
		\item for $\gamma>0$
			\begin{equation*}
		\min\bigg\{
        \inf_\Gamma|\nabla\eta(s)|,\,
         \bigg(\sup_\Gamma|\nabla\eta(s)|\bigg)^{-1},\,
        \frac{1}{F(s)}
        \bigg\}\longrightarrow0.
		\end{equation*}
		\item for $\gamma=0$
		\begin{equation*}
		\inf_\Gamma\bigg(1-2\alpha\frac{\eta-d}{d}\bigg)\longrightarrow0.
		\end{equation*}
	\end{enumerate}
  These solutions are all symmetric and monotone waves of elevation in the sense that $\eta$ is even in $x$ with $\eta_x(x,d)<0$ for $x>0$.	
\end{proposition}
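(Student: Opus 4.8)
The plan is to run the proof of Theorem~\ref{thm: main result} essentially unchanged, using alternatives \ref{thm: positive vorticity} and \ref{thm: zero vorticity} of Theorem~\ref{thm: main result full} in place of \ref{thm: negative vorticity}. As a first step I would verify that the solutions $(w,\alpha)$ on the global curve $\mathscr C$ of Theorem~\ref{thm: main result full} give rise to genuine solutions of the physical problem \eqref{euler}: reversing the reductions of Section~\ref{subsect: conformal map}, it is enough to show that the harmonic function $\eta = y + w_1$ is the imaginary part of a conformal map $\xi + i\eta$ on $\mathcal R$. One defines $\xi$ through the Cauchy--Riemann equations and, by the Darboux--Picard theorem, needs only the injectivity of $\xi + i\eta$ on $\partial\mathcal R = \Gamma\cup B$, which follows from the nodal properties \eqref{nodal eta}, the evenness \eqref{symmetry}, and the Hopf lemma. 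None of this uses the sign of $\gamma$; the sole structural input is \eqref{nodal eta}, which Theorem~\ref{thm: main result full} guarantees along all of $\mathscr C$. Hence this step is literally identical to the corresponding step in the proof of Theorem~\ref{thm: main result}.

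Next I would translate back to dimensional quantities. Recalling from Section~\ref{subsect: nondim} that lengths are rescaled by the asymptotic depth $d$ and that $\alpha = 1/F^2$, and writing starred symbols for the dimensional variables, one has $1 - 2\alpha(\eta^* - d)/d = 1 - 2\alpha w_1$ and $|\nabla\eta^*|^2 = w_{1x}^2 + (1+w_{1y})^2 = |\nabla\eta|^2$ on the surface $\Gamma$. Since $\nabla\eta = (w_{1x},\,1+w_{1y})$ differs from $\nabla w_1$ by the fixed vector $(0,1)$, the quantities $\sup_\Gamma|\nabla w_1|$ and $\sup_\Gamma|\nabla\eta^*| = \sup_\Gamma|\nabla\eta|$ differ by at most $1$, so one blows up along $\mathscr C$ if and only if the other does.

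Finally, for $\gamma > 0$, Theorem~\ref{thm: main result full}\ref{thm: positive vorticity} states that $\sup_\Gamma|\nabla w_1| + \bigl(\inf_\Gamma(w_{1x}^2 + (1+w_{1y})^2)\bigr)^{-1} + \alpha^{-1} \to \infty$, which by the previous paragraph is equivalent to $\sup_\Gamma|\nabla\eta^*| + \bigl(\inf_\Gamma|\nabla\eta^*|^2\bigr)^{-1} + F^2 \to \infty$; a sum of nonnegative reciprocal-type terms tends to infinity exactly when the minimum of the reciprocated quantities tends to $0$, and since $F^2 \to \infty$ precisely when $1/F \to 0$, this yields alternative~(i). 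For $\gamma = 0$, Theorem~\ref{thm: main result full}\ref{thm: zero vorticity} gives $\inf_\Gamma(1 - 2\alpha w_1) \to 0$, that is, alternative~(ii) after the substitution above. The symmetry, monotonicity, and wave-of-elevation assertions are then immediate from \eqref{nodal eta}, Lemma~\ref{lem: nodal eta}, and the evenness \eqref{symmetry}, exactly as in the proof of Theorem~\ref{thm: main result}. I do not anticipate any genuine obstacle here: if anything, the conformal-recovery step of the first paragraph is the least routine part, but it is verbatim the same as for Theorem~\ref{thm: main result}, and otherwise the only thing requiring attention is the elementary bookkeeping in passing from the $w_1$-formulation of the blow-up alternatives in Theorem~\ref{thm: main result full} to the geometric $\eta$-formulation in the statement above.
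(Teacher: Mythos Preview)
The proposal is correct and takes essentially the same approach as the paper, which in fact omits the proof entirely, stating only that it is ``almost identical to that of Theorem~\ref{thm: main result}''. Your write-up faithfully carries out that identical argument, correctly invoking alternatives \ref{thm: positive vorticity} and \ref{thm: zero vorticity} of Theorem~\ref{thm: main result full} and handling the elementary bookkeeping in passing between $\nabla w_1$ and $\nabla\eta$.
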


\section*{Acknowledgements}
The work of SVH is funded by the National Science Foundation through the award DMS-2102961.

\appendix

\section{Schauder estimates for elliptic systems}\label{app: schauder estimates}
Let us define the rectangular strip
\begin{equation*}
	\mathcal{R}=\{(x,y)\in\mathbb{R}^2:0<y<1 \}
\end{equation*}
with top boundary
\begin{equation*}
	\Gamma=\{(x,y)\in\mathbb{R}^2:y=1\}.
\end{equation*}
Moreover, we only work with functions $w_i$ which vanish on the bottom boundary, $y=0$. Consider linear elliptic problems of the form
\begin{subequations}\label{linear elliptic problem}
	\begin{alignat}{2}
		\label{schauder harmonic v1}	
		\Delta w_1&=0&\qquad&\text{in }\mathcal{R},\\
		\label{schauder harmonic v2}
		\Delta w_2&=0&\qquad&\text{in }\mathcal{R},\\
		\label{schauder dynamic}
		a_{ij}\partial_jw_i+b_iw_i&=f&\qquad&\text{on }\Gamma,\\
		\label{schauder kinematic}
		c_iw_i&=g&\qquad&\text{on }\Gamma,
	\end{alignat}
\end{subequations}
where here $\partial_1=\partial_x$, $\partial_2=\partial_y$ and we are using the summation convention. We will now provide sufficient conditions on the coefficients in \eqref{schauder dynamic}--\eqref{schauder kinematic} such that we have access to Schauder estimates for \eqref{linear elliptic problem}. In order to achieve this, we follow the procedures outlined in \cite{volpert:book} and \cite{wloka:book}. We obtain the following lemma.
\begin{lemma}\label{lem: schauder estimates}
	Fix $k\geq1$ and $\beta\in(0,1)$, and suppose that the coefficients in \eqref{linear elliptic problem} have the regularity $a_{ij}, b_i\in C_\be^{k+\beta}(\Gamma)$ and $c_i\in C_\be^{k+1+\beta}(\Gamma)$. If the uniform bound
	\begin{equation}\label{complementing condition bound}
		(c_1a_{21}-c_2a_{11})^2+(c_1a_{22}-c_2a_{12})^2\geq\lambda
	\end{equation} 
	holds for some constant $\lambda>0,$ then \eqref{linear elliptic problem} enjoys the Schauder estimate
	\begin{equation*}\label{schauder estimate}
		\|w_1\|_{C^{k+1+\beta}(\mathcal{R})}+\|w_2\|_{C^{k+1+\beta}(\mathcal{R})}\leq C\big(\|f\|_{C^{k+\beta}(\Gamma)}+\|g\|_{C^{k+1+\beta}(\Gamma)}+\|w_1\|_{C^0(\mathcal{R})}+\|w_2\|_{C^0(\mathcal{R})} \big),
	\end{equation*}
	where the constant $C>0$ depends only on $k,\beta,\lambda$ and on the stated norms of the coefficients.
\end{lemma}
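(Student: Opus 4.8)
The plan is to recognize \eqref{linear elliptic problem} as an elliptic boundary value problem in the sense of Agmon--Douglis--Nirenberg \cite{adn2} and to check that the uniform bound \eqref{complementing condition bound} is precisely the uniform complementing (Lopatinskii--Shapiro) condition for it; the Schauder estimate then follows from the ADN a priori estimates together with a standard covering argument adapted to the unbounded strip. First I would fix Douglis--Nirenberg weights: since the interior system is diagonal with $l_{ij}=\delta_{ij}\Delta$, we take $s_i=0$ and $t_j=2$, so the principal symbol is $\widehat L(\zeta)=(\zeta_1^2+\zeta_2^2)I_2$. This is properly elliptic (including over $\mathbb{R}^2$), since for any tangential covector $\xi=(\xi_1,0)\neq0$ and normal $\nu=(0,1)$ the polynomial $\det\widehat L(\xi+\tau\nu)=(\xi_1^2+\tau^2)^2$ has exactly the double root $\tau=i|\xi_1|$ in the upper half plane, and $2$ equals the number of boundary conditions on $\Gamma$. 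Matching the regularity indices appearing in the statement then forces the boundary weights $r_1=-1$ for the dynamic condition \eqref{schauder dynamic} (whose principal part is then its first-order part $a_{ij}\partial_j$) and $r_2=-2$ for the kinematic condition \eqref{schauder kinematic} (whose principal part is $c_i$); on $\{y=0\}$ we impose the Dirichlet conditions $w_i=0$, whose complementing condition is classical.

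The heart of the matter is the complementing condition on $\Gamma$. Here $\widehat L^{\mathrm{adj}}(\xi+\tau\nu)=(\xi_1^2+\tau^2)I_2$ and $M^+(\tau)=(\tau-i|\xi_1|)^2$, while the rows of $\widehat B(\xi+\tau\nu)\,\widehat L^{\mathrm{adj}}(\xi+\tau\nu)$, where $\widehat B$ is the symbol matrix of the two boundary operators, are $Q_1(\tau)=(\xi_1^2+\tau^2)(a_{11}\xi_1+a_{12}\tau,\ a_{21}\xi_1+a_{22}\tau)$ and $Q_2(\tau)=(\xi_1^2+\tau^2)(c_1,c_2)$. Since $\xi_1^2+\tau^2=(\tau-i|\xi_1|)(\tau+i|\xi_1|)$ has only a simple root at $i|\xi_1|$, a combination $\beta_1Q_1+\beta_2Q_2$ is divisible by $M^+$ exactly when
\[
  \beta_1\bigl(a_{11}\xi_1+i|\xi_1|a_{12}\bigr)+\beta_2c_1=0,\qquad
  \beta_1\bigl(a_{21}\xi_1+i|\xi_1|a_{22}\bigr)+\beta_2c_2=0.
\]
This homogeneous $2\times2$ system in $(\beta_1,\beta_2)$ has only the trivial solution if and only if its determinant, which equals $-\xi_1\bigl[(c_1a_{21}-c_2a_{11})+i\,\mathrm{sign}(\xi_1)(c_1a_{22}-c_2a_{12})\bigr]$, is nonzero; and the squared modulus of the bracket is exactly the left-hand side of \eqref{complementing condition bound}. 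Thus \eqref{complementing condition bound} holds if and only if the complementing condition holds at every point of $\Gamma$ and every $\xi_1\neq0$, with the constant $\lambda$ supplying the required uniformity. I expect this computation --- keeping track of the ADN bookkeeping and the adjugate --- to be the main point of the argument; everything else is fairly mechanical.

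It then remains to assemble the global estimate. I would invoke the ADN interior and boundary Schauder estimates, in the form given in \cite{adn2} or as presented in \cite{wloka:book}, on a covering of $\overline{\mathcal R}$ by overlapping pieces of bounded size (each meeting at most one boundary component, near which one uses the boundary estimate with the corresponding boundary conditions). Because the strip is invariant under translation in $x$, the coefficients are uniformly bounded in the stated H\"older norms, and the Lopatinskii constant is uniformly $\geq\lambda$, the constants in these local estimates can be chosen independent of the piece; summing over the covering and absorbing the overlap terms into $\|w_i\|_{C^0(\mathcal R)}$ yields the asserted estimate, with the interior-residual terms $\|\Delta w_i\|$ dropping out since the $w_i$ are harmonic. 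The unbounded-domain bookkeeping is routine given this translation invariance and the explicit structure, and is carried out in the spirit of \cite{volpert:book}.
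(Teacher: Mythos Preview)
Your argument is correct and follows the same overall strategy as the paper: both recognize \eqref{linear elliptic problem} as an ADN elliptic system, verify that \eqref{complementing condition bound} is equivalent to the uniform complementing condition on $\Gamma$, and then invoke \cite{adn2}. The only real difference is in how the complementing condition is checked. You work directly with the ADN criterion, testing linear independence of the rows of $\widehat B\,\widehat L^{\mathrm{adj}}$ modulo $M^+(\tau)=(\tau-i|\xi_1|)^2$, which reduces to the nonvanishing of a single $2\times2$ determinant whose squared modulus is exactly the left-hand side of \eqref{complementing condition bound}. The paper instead follows the Lopatinskii-matrix presentation from \cite{volpert:book,wloka:book}: it forms the contour integral $\Lambda(x,\tau)=\int_\gamma(\mathscr B\mathscr L^{-1}\;\;\mu\mathscr B\mathscr L^{-1})\,d\mu$, evaluates it by residues at $\mu=i\tau$, and then checks rank $2$ by computing all $2\times2$ minors, each of which turns out to equal $|c_1a_{21}-c_2a_{11}|$ or $|c_1a_{22}-c_2a_{12}|$. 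The two formulations are equivalent, and both land on the same algebraic condition; your route is arguably the more direct one for this particular system. Your discussion of the passage from local ADN estimates to a global estimate on the unbounded strip via a translation-invariant covering is also more explicit than the paper, which simply cites \cite[Theorem~9.2]{adn2} for the conclusion.
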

\begin{remark}
	The constant $\lambda>0$ in \eqref{complementing condition bound} is referred to as the ``minor constant'' in \cite{adn2}. 
\end{remark}
\begin{proof}
	To begin with, \eqref{schauder harmonic v1}--\eqref{schauder harmonic v2} can be written as 
	\begin{equation*}
	\mathscr{L}(\partial)w:=
	\begin{pmatrix}
	\mathscr{L}_{11}(\partial) & \mathscr{L}_{12}(\partial) \\\mathscr{L}_{21}(\partial) & \mathscr{L}_{22}(\partial) 
	\end{pmatrix}w
	=\begin{pmatrix}
	\partial_1^2+\partial_2^2 & 0\\0 &\partial_1^2+\partial_2^2
	\end{pmatrix}
	\begin{pmatrix}
	w_1\\w_2
	\end{pmatrix}
	=0.
	\end{equation*}
	Here we think of $\mathscr{L}_{ij}(\xi)=\delta_{ij}(\xi_1^2+\xi_2^2)$ as polynomials in $\xi\in\mathbb{C}^2$. Moreover, we attach integer weights $s_1=s_2=0$ to the equation and $t_1=t_2=2$ to the unknowns. These weights satisfy the constraint 
	\begin{equation*}
	\operatorname{deg}\mathscr{L}_{ij}(\xi)\leq s_i+t_j\qquad\text{for }i,j=1,2.
	\end{equation*}
	Clearly $\mathscr{L}$ is uniformly elliptic in the sense that there exists some positive constant $A$ such that
	\begin{equation*}\label{uniform ellipticiy}
	A^{-1}|\xi|^4\leq|\mathscr{L}(\xi)|\leq A|\xi|^4
	\end{equation*}
	for every real vector $\xi\in\mathbb{R}^2$. 
	
	We now express the boundary conditions \eqref{schauder dynamic}--\eqref{schauder kinematic} as
	\begin{equation*}
	\tilde{\mathscr{B}}(x,\partial)w:=
	\begin{pmatrix}
	a_{11}(x)\partial_1+a_{12}(x)\partial_2+b_1(x) & a_{21}(x)\partial_1+a_{22}(x)\partial_2+b_2(x)\\ c_1(x) & c_2(x)
	\end{pmatrix}	
	\begin{pmatrix}
	w_1\\w_2
	\end{pmatrix}=0,
	\end{equation*}
	for $x\in\Gamma$. Here, as above, we consider $\tilde{\mathscr{B}}_{hj}=\tilde{\mathscr{B}}_{hj}(x,\xi)$ as polynomials in $\xi\in\mathbb{C}^2$. We again assign the integer weights $t_1=t_2=2$ to the dependent variables as well as the weight $r_1=-2$ and $r_2=-1$ to the equations such that
	\begin{equation*}
		\operatorname{deg}\tilde{\mathscr{B}}_{hj}\leq r_h+t_j.
	\end{equation*}
	From this point on, we will only consider the principle boundary operator $\mathscr{B}(x,\xi)$. This operator only consists of the terms of $\tilde{\mathscr{B}}(x,\xi)$ which are of order $r_h+t_j$. That is,
	\begin{equation*}\label{boundary op}
		\mathscr{B}(x,\xi)=
		\begin{pmatrix}
		a_{11}(x,\xi)+a_{12}(x,\xi) & a_{21}(x,\xi)+a_{22}(x,\xi)\\ c_1(x) & c_2(x)
		\end{pmatrix}.
	\end{equation*}  
	Let us now fix the point on the boundary $x\in\Gamma$ and write  
	\begin{equation*}\label{tau nu map}
		\xi=(\tau,\nu),
	\end{equation*} 
	with $\tau$ and $\nu$ denoting the tangential and normal components to $\Gamma$, respectively. In this notation, we have
	\begin{align*}\label{L tau nu}
		\mathscr{L}(\tau,\nu)&=
		\begin{pmatrix}
		\tau^2+\nu^2 & 0\\ 0 & \tau^2+\nu^2
		\end{pmatrix},\\
		\mathscr{B}(x,\tau,\nu)&=
		\begin{pmatrix}
		a_{11}(x)\tau+a_{12}(x)\nu & a_{21}(x)\tau+a_{22}(x)\nu\\
		c_1 & c_2
		\end{pmatrix}.
	\end{align*}
	
	We now construct the so-called Lopatinskii matrix; see \cite{volpert:book}.  This $2\times4$ matrix is defined as the line integral 
	\begin{equation}\label{lopatinskii matrix}
		\Lambda(x,\tau):=\int_{\gamma}\Big(\mathscr{B}(x,\tau,\mu)\,\mathscr{L}^{-1}(\tau,\mu)\quad\mu\,\mathscr{B}(x,\tau,\mu)\,\mathscr{L}^{-1}(\tau,\mu)\Big)\,d\mu,
	\end{equation}
	where the line integral is taken over any Jordan curve $\gamma$ which lies in the upper half-plane $\operatorname{Im}\mu>0$ and encloses the roots of $\operatorname{det}\mathscr{L}(\tau,\mu)=\tau^2+\mu^2$ with positive imaginary part. Calculating the integrand in \eqref{lopatinskii matrix} explicitly yields
	\begin{equation*}\label{lopatinskii integrand}
		\Big(\mathscr{B}\,\mathscr{L}^{-1}\quad\mu\,\mathscr{B}\,\mathscr{L}^{-1}\Big)=\frac{1}{\tau^2+\mu^2}
		\begin{pmatrix}
		a_{11}\tau+a_{12}\mu & a_{21}\tau+a_{22}\mu & \mu(a_{11}\tau+a_{12}\mu) & \mu(a_{21}\tau+a_{22}\mu)\\
		c_1 & c_2 & \mu c_1 & \mu c_2
		\end{pmatrix}.
	\end{equation*}
    The only relevant root is $\mu=i\tau$. Therefore, using calculus of residues, we calculate
	\begin{equation*}
		\Lambda(x,\tau)=\frac{1}{2i\tau}
		\begin{pmatrix}
		a_{11}\tau+ia_{12}\tau & a_{21}\tau+ia_{22}\tau & -a_{12}\tau+ia_{11}\tau^2 & -a_{22}\tau+ia_{21}\tau^2\\
		c_1 & c_2 & ic_1\tau & ic_2\tau
		\end{pmatrix}.
	\end{equation*}
	The matrix \eqref{lopatinskii matrix} thus has rank $2$ in a uniform sense provided  
	\begin{equation}\label{minor condition}
		\inf_{x\in\Gamma,\; |\tau|=1}\big(|M_1|+\cdots+|M_6| \big)>0,
	\end{equation}
	where $M_1,\ldots,M_6$ denote all the minor determinants of $\Lambda(x,\tau,\nu)$. A calculation shows that each $|M_i|$ for $i=1,\ldots,6$ is equal to either $|a_{11}c_2-a_{21}c_1|$ or $|a_{12}c_2-a_{22}c_1|$. Therefore, \eqref{minor condition} is equivalent to 
	\begin{equation*}
		\inf_{x\in\Gamma,\; |\tau|=1}\big( (c_1a_{21}-c_2a_{11})^2+(c_1a_{22}-c_2a_{12})^2 \big)\geq\lambda,
	\end{equation*}  
	for some $\lambda>0$. The result now follows from \cite[Theorem 9.2]{adn2}.
\end{proof}

\section{Abstract global bifurcation theorem}\label{app: cited results}
In this section, we provide an abstract global bifurcation theorem for real-analytic operators. This is a modified version a global bifurcation theorem for solitary waves, as stated in \cite[Theorem 6.1]{strat}, which is itself a modification of the results by Dancer~\cite{dancer} and Buffoni and Toland~\cite{bt:analytic}. A very similar result also appears in~\cite[Theorem~B.1]{cww:globalfronts}. The only difference is that in \ref{semifredholm} we allow for the linearized operators to be locally proper rather than requiring them to be Fredholm of index $0$.
\begin{theorem}\label{thm: global bifurcation}
	Let $\mathscr{X}$ and $\mathscr{Y}$ be Banach spaces, $\mathscr{U}$ be an open subset of $\mathscr{X}\times\mathbb{R}$ with $(0,0)\in\partial\mathscr{U}$. Consider a real-analytic mapping $\mathcal{F}\colon\mathscr{U}\to\mathscr{Y}$. Suppose that
	\begin{enumerate}[label=\rm(\Roman*)]
		\item \label{semifredholm} for any $(\mu,x)\in\mathscr{U}$ with $\mathcal{F}(\mu,x)=0$ the Fréchet derivative $\mathcal{F}_x(\mu,x)\colon\mathscr{X}\to\mathscr{Y}$ is locally proper;
		\item there exists a continuous curve $\mathscr{C}_\textup{loc}$ of solutions to $\mathcal{F}(\mu,x)=0,$ parameterized as 
		\begin{equation*}
		\mathscr{C}_{\textup{loc}}:=\{(\mu,\tilde{x}(\mu)):0<\mu<\mu_* \}\subset\mathcal{F}^{-1}(0),
		\end{equation*}
		for some $\mu_*>0$ and continuous $\tilde{x}$ with values in $\mathscr{X}$ and $\lim_{\mu\searrow0}\tilde{x}(\mu)=0$;
		\item \label{invertible} the linearized operator $\mathcal{F}_x(\mu,\tilde{x}(\mu))\colon\mathscr{X}\to\mathscr{Y}$ is invertible for all $\mu$.
	\end{enumerate}
	Then $\mathscr{C}_\text{loc}$ is contained is a curve of solutions $\mathscr{C}$,  parameterized as
	\begin{equation*}
		\mathscr{C}:=\{(\mu(s),x(s)):0<s<\infty \}\subset\mathcal{F}^{-1}(0)
	\end{equation*}
	for some continuous $(0,\infty)\ni s\mapsto(x(s),\mu(s) )\in\mathscr{U},$ with the following properties.
	\begin{enumerate}[label=\rm(\alph*)]
	\item One of the following alternatives holds:
		\begin{enumerate}[label=\rm(\roman*)]
			\item \label{blow-up}  \textup{(Blow-up)} As $s\to\infty$,
	\begin{equation*}
		N(s):=\|x(s)\|_\mathscr{X}+\frac{1}{\operatorname{dist}((\mu(s),x(s)),\partial\mathscr{U})}+\mu(s)\to\infty.
	\end{equation*}
	
			\item \label{loss of compactness} \textup{(Loss of compactness)} There exists a sequence $s_n\to\infty$ such that $\operatorname{sup}_n N(s_n)<\infty$ but $\{x(s_n)\}$ has no subsequences converging in $\mathscr{X}$.
		\end{enumerate}
	\item  Near each point $(\mu(s_0),x(s_0))\in\mathscr{C}$, we can reparameterize $\mathscr{C}$ so that $s\mapsto(\mu(s),x(s))$ is real analytic.
	
	\item $(\mu(s),x(s))\notin\mathscr{C}_{\text{loc}}$ for $s$ sufficiently large.
	\end{enumerate}
\end{theorem}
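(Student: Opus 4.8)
\emph{Plan of proof.} The idea is to deduce the theorem from the solitary-wave global bifurcation theorem of \cite[Theorem~6.1]{strat} (which itself descends from \cite{dancer,bt:analytic}; see also \cite[Theorem~B.1]{cww:globalfronts}), whose hypotheses are identical to ours \emph{except} that there the Fréchet derivative $\mathcal{F}_x(\mu,x)$ is assumed to be Fredholm of index $0$ at every solution, rather than merely locally proper as in \ref{semifredholm}. So the only thing that really needs to be argued is that hypotheses \ref{semifredholm} and \ref{invertible} together already force $\mathcal{F}_x$ to be Fredholm of index $0$ at every point of the curve that gets built; once this is in hand, the proof of \cite[Theorem~6.1]{strat} applies with no change.

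The first step is to recall that a bounded operator $T\colon\mathscr{X}\to\mathscr{Y}$ is locally proper if and only if it is semi-Fredholm with $\dim\ker T<\infty$ and closed range, and that on this class the Fredholm index is locally constant in the operator-norm topology. Since $\mathcal{F}$ is real analytic, $\mathcal{F}_x$ is continuous, so along any continuous arc of solutions $s\mapsto(\mu(s),x(s))$ the operators $\mathcal{F}_x(\mu(s),x(s))$ vary norm-continuously; by \ref{semifredholm} their Fredholm index is therefore constant on each connected set of solutions. The second step is to run the analytic continuation of \cite{bt:analytic,strat} verbatim: one builds $\mathscr{C}$ as a maximal chain of real-analytic arcs emanating from $\mathscr{C}_{\mathrm{loc}}$, each arc obtained via the analytic implicit function theorem where $\mathcal{F}_x$ is invertible and via a Lyapunov--Schmidt reduction to a finite-dimensional real-analytic equation (followed by a Puiseux expansion) where $\ker\mathcal{F}_x\neq\{0\}$ but $\mathcal{F}_x$ is Fredholm of index $0$. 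Each extension step uses only that $\mathcal{F}_x$ has index $0$ at the point it is extended from; since $\mathscr{C}$ is connected and contains $\mathscr{C}_{\mathrm{loc}}$, on which $\mathcal{F}_x$ is invertible and hence index $0$ by \ref{invertible}, the continuity statement forces index $0$ everywhere on $\mathscr{C}$, and the continuation never stalls.

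The final step is to recover the dichotomy and the remaining conclusions. Here local properness \ref{semifredholm} plays exactly the role of the compactness hypothesis in \cite[Theorem~6.1]{strat}: given $s_n\to\infty$ with $\sup_n N(s_n)<\infty$, one either extracts a subsequence with $x(s_n)$ convergent in $\mathscr{X}$ --- in which case maximality of the chain together with real analyticity forces the blow-up alternative \ref{blow-up} --- or else no such subsequence exists and one lands in the loss-of-compactness alternative \ref{loss of compactness}; the local real-analytic reparametrization (b) and property (c) follow as in \cite{dancer,bt:analytic,strat}. The one genuinely new ingredient is the index-propagation argument above, and the delicate point to get right is the apparent circularity between needing index $0$ at each step of the continuation and needing the completed curve to conclude index $0$; I expect this to be the main obstacle, and I would resolve it by emphasizing the inductive structure of the continuation, since every extension only draws on the index at a point of the already-constructed (hence connected, already index-$0$) portion of $\mathscr{C}$.
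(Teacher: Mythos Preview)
Your proposal is correct and matches the paper's approach essentially line for line: the paper also follows \cite[Theorem~6.1]{strat} verbatim, with the single new ingredient being that at each endpoint $(\mu_1,x_1)$ of a distinguished arc one uses local properness (semi-Fredholmness) together with continuity of the index along the already-constructed connected arc from $\mathscr{C}_{\mathrm{loc}}$ to conclude that $\mathcal{F}_x(\mu_1,x_1)$ has index $0$, which is exactly your index-propagation argument and your inductive resolution of the apparent circularity.
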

\begin{proof}
	The proof of the theorem is almost identical to the proof of \cite[Theorem 6.1]{strat}, and so we only give a brief sketch. As \cite{strat}, since \ref{invertible} holds we can construct the distinguished arc $A_0$, the connected component of
	\begin{equation*}
		\mathcal{A}:=\big\{(\mu,x)\in\mathscr{U}:\mathcal{F}(\mu,x)=0,\; \mathcal{F}_x(\mu,x)\text{ is invertible }  \big\}
	\end{equation*}
	in which $(\mu_{1/2},x_{1/2}):=(\mu_*/2,x(\mu_*/2))$ lies. The analytic implicit function theorem guarantees that all distinguished arcs are graphs. After possibly re-parameterizing, we write $A_0$ as
	\begin{equation*}
		A_0=\{(\mu(s),x(s)):0<s<1 \},
	\end{equation*}
	where $\mu(s)$ is increasing. From the implicit function theorem, the local curve of solutions $\mathscr{C}_\text{loc}$ lies entirely in $A_0$. Arguing as in the proof of \cite[Theorem 6.1]{strat}, the starting point of $A_0$ is
	\begin{equation*}
		\lim_{s\searrow0}(\mu(s),x(s))=(0,0).
	\end{equation*}
	The next step is to consider the limit $s\nearrow1$. As in the proof of \cite[Theorem 6.1]{strat} we now have two options. Either, $N(s)\to\infty$ as $s\nearrow1$ in which case after re-parametrization \ref{blow-up} occurs, or there exists some sequence $\{s_n\}\subset\big(\tfrac{1}{2},1\big)$ with $s_n\nearrow1$ so that $N(s_n)\leq M<\infty$ for all $n>0$. Assuming without loss of generality that $\mu(s_n)\to\mu_{1}$, we now again have two possibilities. Either $\{x(s_n)\}$ has no convergent subsequence, in which case alternative \ref{loss of compactness} takes place, or, after extraction, $(\mu(s_n),x(s_n))\to (\mu_1,x_1)\in\mathscr{U}$. 
	
	By continuity, $\mathcal{F}(\mu_{1},x_1)=0$. Moreover, by assumption the linearized operator $\mathcal{F}_x(\mu,x)$ is Fredholm with index 0 along $\mathscr{C}_\text{loc}$ and semi-Fredholm beyond. By continuity of the index, we can now conclude that $\mathcal{F}_x(\mu_1,x_1)$ is Fredholm index $0$. The rest of the proof, establishing the existence of an infinite sequence of connected distinguished arcs $A_n$ along which either \ref{blow-up} or \ref{loss of compactness} must hold, now follows exactly as in the proof of \cite[Theorem 6.1]{strat}.
\end{proof}

\bibliographystyle{amsalpha}
\bibliography{references}

\end{document}